\newtheorem{theorem}{Theorem}[section]
\newtheorem{lemma}[theorem]{Lemma}
\newtheorem{proposition}[theorem]{Proposition}
\newtheorem{corollary}[theorem]{Corollary}
\newtheorem{subclaim}[theorem]{Sub-Claim}
\newtheorem{introtheorem}{Theorem}
\theoremstyle{definition}
\newtheorem{definition}[theorem]{Definition}
\theoremstyle{remark}
\newtheorem{remark}{Remark}[theorem]
\newcounter{mynote}
\title{Comparing the Roller and B(X) boundaries of CAT(0) cube complexes}
\author{Ivan Levcovitz}
\email{Levcovitz@technion.ac.il}
\address{Technion - Israel Institute of Technology
\\Department of Mathematics
\\Haifa, 32000 
\\Israel
}
\thanks{This work was supported by the Israel Science Foundation and in part by a Technion fellowship.}
\begin{document}
\maketitle
\begin{abstract}
The Roller boundary is a well-known compactification of a CAT(0) cube complex $X$. When $X$ is locally finite, essential, irreducible, non-Euclidean and admits a cocompact action by a group $G$, Nevo-Sageev show that a subset, $B(X)$, of the Roller boundary is the realization of the Poisson boundary and that the action of $G$ on $B(X)$ is minimal and strongly proximal. Additionally, these authors show $B(X)$ satisfies many other desirable dynamical and topological properties. In this article we give several equivalent characterizations for when $B(X)$ is equal to the entire Roller boundary. As an application
we show, under mild hypotheses, that if $X$ is also $2$--dimensional then $X$ is $G$--equivariantly quasi-isometric to a CAT(0) cube complex $X'$ whose Roller boundary is equal to $B(X')$. Additionally, we use our characterization to show that the usual CAT(0) cube complex for which an infinite right-angled Coxeter/Artin group acts on geometrically has Roller boundary equal to $B(X)$, as long as the corresponding group does not decompose as a direct product.
\end{abstract}
\section{Introduction}
CAT(0) cube complexes have played a central role in geometric group theory and low-dimensional topology. For instance, the resolution of the virtual Haken conjecture, an outstanding conjecture of Thurston, by Agol \cite{Ago} and Wise \cite{Wise}, relied heavily on CAT(0) cube complex developments. The class of groups that act nicely on a CAT(0) cube complex is surprisingly large, including Coxeter groups \cite{NR}, right-angled Artin groups, small cancellation groups \cite{Wise_sc}, several classes of Artin groups \cite{Haettel, HJP} and random groups at density less than $\frac{1}{6}$ \cite{OW}. General criteria are given by Sageev \cite{Sag} for when one can obtain such actions. 

Associated to a CAT(0) cube complex, $X$, there is a natural compactification introduced by Roller \cite{Rol} now known as the Roller boundary, $\partial X$. As a set, the Roller boundary consists of ultrafilters on halfspaces of $X$ which are ``at infinity.'' The Roller boundary has been well-studied and has proven useful for tackling several different problems regarding cube complexes \cite{CFI, Fernos, Hag2, NS, Rol}.

The Poisson boundary of a group, introduced in  \cite{Furstenberg}, is roughly the space of all possible directions at infinity a random walk can take on the given group. In \cite{NS}, Nevo-Sageev single out a special subspace, $B(X)$, of the Roller boundary. These authors show that if $X$ is locally finite, essential, irreducible, non-Euclidean and admits a cocompact group action, then $B(X)$ is a minimal realization of the Poisson boundary. Furthermore, these authors show $B(X)$  satisfies many additional interesting dynamical and topological properties: $B(X)$ is a compact, metric, minimal, strongly-proximal, uniquely-stationary, mean-proximal, universally amenable and equicontinuously decomposable realization of the Poisson boundary. In \cite{Fernos} and \cite{FLF} some of these results are generalized to more general cube complexes and a different perspective on Poisson boundaries of CAT(0) cube complex is given. Notably, in \cite{FLF} a new characterization of the subset $B(X)$ is given in terms of certain rays in $X$.

In this article we characterize when $B(X)$ is equal to the entire Roller boundary, $\partial X$. In fact, we show that in many well studied settings either this equality holds or one can consider an appropriate alternative complex for which it holds. Consequently, often one can take the Roller boundary itself as a minimal Poisson boundary.

We give two equivalent conditions for when $\partial X = B(X)$. The first of which, the property of having caged hyperplanes, is a condition concerning finite subsets of halfspaces of $X$. Roughly, $X$ has caged hyperplanes if given any vertex of $X$, the intersection of every appropriate set of halfspaces close to the given vertex contains a hyperplane. The second characterization is in terms of open sets in the space of ultrafilters on $X$.

\begin{introtheorem}[Theorem \ref{thm_char}] \label{intro_thm_char}
Let $X$ be an essential, locally finite, cocompact CAT(0) cube complex. The following are equivalent:
	\begin{enumerate}
		\item $\partial X = B(X)$
		\item $X$ has caged hyperplanes.
		\item Every open set in the space of ultrafilters on $X$, which contains an ultrafilter in $\partial X$, contains a hyperplane. 
	\end{enumerate}
\end{introtheorem}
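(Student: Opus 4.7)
The plan is to prove $(2) \Leftrightarrow (3)$ by translating between the combinatorial condition on vertices and the topological condition on open sets of ultrafilters, and to close the cycle via $(1) \Leftrightarrow (3)$ using the description of $B(X)$ in terms of rays from \cite{NS, FLF}. The common framework is to work with basic open sets, which take the form $U_{h_1,\ldots,h_n} = \{\xi : h_1,\ldots,h_n \in \xi\}$ for finitely many consistent halfspaces $h_i$. Saying that such a set ``contains a hyperplane'' $\hat{k}$ unpacks to: $\hat{k}$ crosses none of the $\hat{h}_i$ and lies on the $h_i$-side of each, equivalently, $\hat{k}$ is contained in the convex region $\bigcap_i h_i \subset X$.

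For $(2) \Leftrightarrow (3)$, the observation is that a basic open neighborhood of a boundary ultrafilter $\xi \in \partial X$ is determined by a finite family of halfspaces $h_i \in \xi$, and the intersection $\bigcap_i h_i$ is a nonempty convex subcomplex of $X$ containing some vertex $v$. At such a $v$, the $h_i$ are among the halfspaces containing $v$, so the caged hyperplanes condition applied at $v$ to this subfamily yields a hyperplane inside $\bigcap_i h_i$, hence inside $U_{h_1,\ldots,h_n}$. The converse uses that any finite family of halfspaces containing a vertex $v$ extends to an ultrafilter in $\partial X$, for instance by choosing a combinatorial geodesic ray based at $v$ pointing into $\bigcap_i h_i$ and taking its Roller limit. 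The main technical point here is to match the informal phrase ``appropriate set of halfspaces close to a vertex'' with the family of halfspaces defining an arbitrary basic open set at infinity; this should reduce to the precise formulation of caged hyperplanes in the body of the paper.

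For $(1) \Rightarrow (3)$, if $\xi \in B(X)$ then by the ray characterization $\xi$ is the limit of a combinatorial geodesic ray $\gamma$ with the regularity properties witnessing membership in $B(X)$; once $\gamma$ enters the convex region $\bigcap_i h_i$, every hyperplane dual to a subsequent edge of $\gamma$ is contained in $\bigcap_i h_i$ by convexity, producing the required hyperplane. For $(3) \Rightarrow (1)$, I would start with any $h_1 \in \xi$ and iteratively apply (3): at stage $k$, the neighborhood $U_{h_1,\ldots,h_k}$ of $\xi$ contains a hyperplane $\hat{h}_{k+1}$, which gives $h_{k+1} \in \xi$ strictly nested inside all previous $h_i$. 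The resulting descending chain determines $\xi$ and supplies a combinatorial ray converging to $\xi$.

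The principal obstacle is that a bare nested chain is not automatically a witness to membership in $B(X)$: one must match the construction to the precise regularity condition used to define (or characterize) $B(X)$. Depending on whether the proof works with non-terminating ultrafilters, strongly separated chains, or the ray criterion of \cite{FLF}, the inductive step in $(3) \Rightarrow (1)$ may need refinement, for instance by using (3) (equivalently, the caged hyperplanes property) at each stage to pick successive hyperplanes with additional separation from the previous ones, so that the limiting ray satisfies whatever regularity hypothesis is required. I expect this to be possible precisely because (3) guarantees a hyperplane in \emph{every} open neighborhood of $\xi$, which gives enough freedom in the inductive choice to impose the needed extra separation.
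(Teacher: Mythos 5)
Your high-level plan matches the paper's in spirit — the paper also reduces $(2)\Leftrightarrow(3)$ to a statement about basic open sets (Propositions \ref{prop_caged_equiv} and \ref{lemma_contains_cage_nbhd}) and closes the cycle by $(2)\Rightarrow(1)$ and $(1)\Rightarrow(3)$ — but there are several genuine gaps.

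First, and most important: the theorem's hypotheses do not exclude $X$ reducible or Euclidean, while the results from \cite{NS} that drive both $(1)\Rightarrow(3)$ (Lemma~\ref{lemma_implies_caged}, via \cite[Cor.~5.7]{NS}) and the construction of non-terminating ultrafilters require $X$ irreducible and non-Euclidean. The paper therefore disposes of the reducible/Euclidean cases separately in Proposition~\ref{prop_red_euc_summary} before running the main argument. Your proposal never performs this case split, and the ray characterization from \cite{FLF} you invoke is also stated under similar hypotheses, so the argument as written simply does not apply to all $X$ covered by the theorem.

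Second, for $(1)\Rightarrow(3)$, the step ``once $\gamma$ enters the convex region $\bigcap_i h_i$, every hyperplane dual to a subsequent edge of $\gamma$ is contained in $\bigcap_i h_i$ by convexity'' is false. Convexity guarantees that $\gamma$ stays in $\bigcap_i h_i$, but a hyperplane dual to an edge of $\gamma$ merely \emph{meets} $\bigcap_i h_i$ and can perfectly well cross one of the $\hat h_i$ and exit. To make a ray argument work you would need, e.g., a chain of pairwise non-crossing (or strongly separated) hyperplanes dual to $\gamma$, together with an argument (in the style of Lemma~\ref{lemma_intersects_inf_many}) that some tail of that chain is entirely inside $\bigcap_i h_i$. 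The paper avoids this by citing \cite[Cor.~5.7]{NS} directly.

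Third, for $(3)\Rightarrow(1)$, your inductive construction does not produce a nested chain: if $\hat h_{k+1}\subset\bigcap_{i\le k} h_i$, then exactly one halfspace of $\hat h_{k+1}$ is contained in $h_k$, but there is no reason the halfspace belonging to $\xi$ is that one — $\xi$ may lie on the ``near'' side of $\hat h_{k+1}$, in which case the halfspace in $\xi$ is the large one containing $h_k^*$ and the chain stalls. More fundamentally, you are trying to show that $\xi$ itself satisfies a regularity (ray/non-terminating) criterion, which would prove the stronger and generally false statement $\partial X=\mathcal{U}_{NT}$. The paper's Proposition~\ref{prop_caged_implies_bx} instead shows $\alpha$ is in the \emph{closure} of $\mathcal{U}_{NT}$: from a cage neighborhood $\mathcal{U}_K\subset V$ and a hyperplane $\hat h\subset\mathcal{U}_K$, one invokes \cite[Thm.~3.1]{NS} to produce a non-terminating ultrafilter containing $h$, hence contained in $\mathcal{U}_K\subset V$ by consistency. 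This approximation viewpoint is what you are missing.

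Finally, for $(2)\Leftrightarrow(3)$, you flag but do not supply the reduction from an arbitrary finite set of halfspaces of $\xi$ to a cage. This is not automatic: in the definition, a cage is a family whose positive carriers have a common vertex, and a generic family of halfspaces cutting out a basic open neighborhood of $\xi$ will not have this property. The paper's Lemma~\ref{lemma_cage_subset} does this via an explicit replacement procedure (using Lemma~\ref{lemma_convexity2} and a decreasing-distance argument), and Proposition~\ref{lemma_contains_cage_nbhd} packages it. Without this step, ``caged hyperplanes'' and ``every open set at infinity contains a hyperplane'' are not obviously equivalent.
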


We note that the essential hypothesis is not a heavy requirement, as one can pass to an invariant essential subcomplex \cite{CS}. 

We next focus our attention to CAT(0) cube complexes with straight links, a generalization of extendable CAT(0) geodesics. Here stronger results are possible. In this setting we show that $\partial X \neq B(X)$ if and only if $X$ contains a certain finite set of halfspaces, which we call a tight cage (Definition \ref{def_tight_cage}), with a very distinctive structure. Using this extra structure we prove that given such a CAT(0) cube complex, $X$, that is $2$--dimensional and irreducible, and given a proper, cocompact action, satisfying a mild hypothesis, by a group $G$, then $X$ can be $G$--equivariantly replaced by a quasi-isometric CAT(0) cube complex $\bar{X}$ for which $\partial \bar{X} = B(\bar{X} )$:

\begin{introtheorem}[Theorem \ref{thm_2_dim_ccc}] \label{intro_thm_2d}
	Let $X$ be a $2$--dimensional, irreducible, locally finite CAT(0) cube complex with straight links. Suppose $G$ acts properly, cocompactly and without core carrier reflections on $X$. Then $G$ acts properly and cocompactly on a $2$--dimensional, irreducible, locally finite CAT(0) cube complex with straight links, $\bar{X}$, which satisfies $\partial \bar{X} = B(\bar{X})$. Furthermore, $X$ is $G$--equivariantly quasi-isometric to $\bar{X}$.
\end{introtheorem}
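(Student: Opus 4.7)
The plan is to use the tight-cage characterization of when $\partial X \neq B(X)$ in the straight-links setting, together with the equivalence in Theorem~\ref{intro_thm_char}, and then equivariantly modify $X$ to eliminate every tight cage. Once $\bar{X}$ has no tight cages it will have caged hyperplanes, and hence $\partial \bar{X} = B(\bar{X})$ by Theorem~\ref{intro_thm_char}. Thus the entire task reduces to producing an equivariant cubical modification with the right preservation properties.

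First I would analyze the structure of tight cages in a $2$-dimensional, irreducible CAT(0) cube complex with straight links. In dimension two, carriers of hyperplanes are trees of squares, and the straight-links hypothesis forces every link to be a very restricted graph (morally a cycle of bounded combinatorial type), so a tight cage should reduce to a rigid finite pattern of halfspaces with a canonical ``missing corner'' where the interior hyperplane fails to exist. Cocompactness of $G$ then provides only finitely many $G$-orbits of such cages, which is what makes an equivariant cure possible in one pass.

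Next I would construct $\bar{X}$ by a $G$-equivariant cubical move that, for each $G$-orbit of tight cages, attaches a small family of cubes filling in the missing corner, thereby inserting a new hyperplane lying inside every halfspace of the cage. The assumption that $G$ acts without core carrier reflections is used precisely here: it rules out stabilizer elements that would swap the two sides of a hyperplane in its carrier, which would give an orientation obstruction to defining the attaching map equivariantly. After performing the attachments, one has to check (i) that $\bar{X}$ is still CAT(0) via Gromov's link condition at each old and each new vertex, (ii) that $\bar{X}$ is still $2$-dimensional, locally finite, irreducible and has straight links, and (iii) that the extended $G$-action is still proper and cocompact; items (ii) and (iii) follow from the local nature of the move and the finiteness of orbits. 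Because each filling has uniformly bounded diameter and only finitely many $G$-orbits are involved, the natural $G$-equivariant inclusion $X \hookrightarrow \bar{X}$ is a $G$-equivariant quasi-isometry.

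The main obstacle is twofold. First, pinning down the combinatorial structure of tight cages in the $2$-dimensional straight-links setting sharply enough to isolate a canonical equivariant filling recipe; this is exactly where straight links is essential, since without it the ``missing corner'' need not be canonical. Second, ensuring that the filled complex $\bar{X}$ remains both CAT(0) and irreducible: carelessly attached squares can easily produce non-flag links or introduce a spurious $\ell^1$ product factor in some link, so the attaching pattern must be designed so that each new link is still flag and no new direct-product decomposition of $\bar{X}$ is created. Verifying these properties, and checking that the no-core-carrier-reflections hypothesis is not destroyed if one prefers to argue orbit by orbit, is likely the most delicate bookkeeping in the proof.
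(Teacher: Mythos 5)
Your proposal goes in exactly the opposite direction from the paper: you propose to \emph{enlarge} $X$ by attaching cubes into the ``missing corner'' of each tight cage, whereas the paper \emph{shrinks} $X$ via a $(G,\hat h)$--collapsing map $\rho_{G,\hat h}\colon X\to X_{G,\hat h}$ that collapses the carriers of all hyperplanes in the $G$--orbit of a suitably chosen ``loose'' hyperplane $\hat h$. The paper then shows this collapse strictly decreases the number of $G$--orbit classes of sectorless tight cages while preserving straight links, local finiteness, properness, cocompactness, dimension two and the no-core-carrier-reflections hypothesis, and iterates finitely many times. This is not just a cosmetic difference; your attachment scheme appears to have real gaps.

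The most serious problem is your claim that ``each filling has uniformly bounded diameter.'' By tight cage condition~(3), the core $Y = \bigcap_{t\in\mathcal T}\mathcal C^+(t)$ is unbounded, and any new hyperplane contained in $\bigcap_{t\in\mathcal T} t$ would have to separate $Y$ from the rest of $\bigcap_t t$, so the attachment would necessarily be infinite, roughly $Y\times[0,1]$ glued along $Y$. Beyond that, the outer boundary $Y\times\{1\}$ of such a fin immediately destroys straight links: a vertex $v'\in Y\times\{1\}$ has one edge $e$ descending to $Y$ (dual to the new hyperplane $\hat h'$) and all its other edges lie in $Y\times\{1\}$; their dual hyperplanes all cross $Y$ and hence all cross $\hat h'$, so no edge at $v'$ has a hyperplane disjoint from the one dual to $e$. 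For the same reason $\bar X$ would fail to be essential near the fin. You also give no argument that the filled complex has strictly fewer tight-cage orbits, whereas the paper proves this via Lemmas~\ref{lemma_not_cores_preserved} and \ref{lemma_core_lifts}. Finally, the no-core-carrier-reflections hypothesis is not used to resolve an orientation obstruction in an attaching map; in the paper it appears in Lemma~\ref{lemma_collapse_sequential_hyps} to bound how many consecutive non-crossing hyperplanes of the same orbit can occur (which gives local finiteness and the quasi-isometry after collapsing) and in Proposition~\ref{prop_remains_nice}(6) to show the hypothesis persists after collapsing, so the iteration can continue.

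Structural preliminaries you sketch, such as reducing to sectorless tight cages and using cocompactness to get finitely many orbits, do line up with Proposition~\ref{prop_tight_cage_implies_sectorless} and the finiteness of orbit classes in the paper's proof of Theorem~\ref{thm_2_dim_ccc}. But the central modification step is both wrong in kind (attach vs.\ collapse) and unsupported at the points where the paper does the hard work (Lemma~\ref{lemma_contains_loose_hyp} producing a loose hyperplane, Lemma~\ref{lemma_collapsed_basics} showing the collapse is a CAT(0) cube complex with a $G$--action, and Proposition~\ref{prop_remains_nice} showing all hypotheses are preserved).
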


We briefly discuss the above assumption on the group's action. Let $\hat{h}$ be a hyperplane in $X$ and $\hat{h} \times [0,1]$ denote its carrier. Let $h$ be the halfspace associated to $\hat{h}$ which contains $\hat{h} \times \{1\} \subset \hat{h} \times [0,1]$. An element $g \in G$ acts as a carrier reflection on $h$, if it stabilizes $\hat{h} \times \{1\}$ and does not stabilize $\hat{h}$. The group $G$ acts without carrier reflections if given any $g \in G$ and any halfspace $h$, it follows that $g$ does not act as a carrier reflection of $h$. The acts without ``core'' carrier reflections hypothesis above is a weakening of the acting without carrier reflections hypothesis.

For general cocompact, locally finite CAT(0) cube complexes with straight links, we show it is often a straightforward task to recognize when $B(X)$ is equal to the whole Roller boundary. For instance, the following gives a criterion for doing so:

\begin{introtheorem}[Corollary \ref{cor_convex_join_subset}] \label{intro_thm_joins}
Let $X$ be a cocompact, locally finite CAT(0) cube complex with straight links. If $\partial X \neq B(X)$, then $X$ contains an unbounded convex subset $Y$ such that the link, taken in $X$, of every vertex in $Y$ is a join.
\end{introtheorem}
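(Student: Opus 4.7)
The plan is to invoke the tight-cage characterization of $\partial X \neq B(X)$ advertised in the introduction for CAT(0) cube complexes with straight links, and then to extract the desired subcomplex $Y$ from such a cage. Assume $\partial X \neq B(X)$. By the straight-links strengthening of Theorem \ref{intro_thm_char}, there is a tight cage $C$, i.e.\ a finite set of halfspaces whose intersection witnesses an ultrafilter in $\partial X \smallsetminus B(X)$ and which does not ``cage'' any hyperplane. Let $Y$ be the subcomplex spanned by the vertices of $X$ lying in every halfspace of $C$. As an intersection of halfspaces, $Y$ is automatically convex, so the real content is (i) unboundedness and (ii) the join property at every vertex.

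For (i), I would argue that $Y$ must contain infinitely many vertices: the tight cage is defined to realize an ideal point in the Roller boundary, so every ultrafilter in $\partial X$ compatible with $C$ lies in the Roller closure of $Y$. Combined with local finiteness, any convex subcomplex containing infinitely many vertices has infinite diameter, giving unboundedness. If the tight-cage definition in the paper only guarantees a single orbit of witnesses, cocompactness of the $G$-action provides $G$-translates of those witnesses that pile up in $Y$ or in a controlled enlargement of $Y$, still convex and still bounded by finitely many halfspaces.

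For (ii), this is where the straight-links hypothesis does the real work. At any $v \in Y$, partition the edges at $v$ into those dual to hyperplanes bounding the cage $C$ and those dual to the remaining hyperplanes through $v$. The defining property of a tight cage is precisely that no hyperplane transverse to $C$ can be ``trapped'' inside $\bigcap C$; I would use this, together with straight links, to show that any edge at $v$ tangent to a bounding hyperplane of $C$ commutes (spans a square) with any edge at $v$ transverse to $C$. Straight links is what converts the failure-of-caging condition, which is a priori only about existence of hyperplanes crossing certain halfspace intersections, into an actual square-filling statement at each vertex. This yields a nontrivial join decomposition of $\mathrm{lk}(v,X)$.

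The main obstacle is step (ii), and specifically the propagation from the distinguished witness vertices of the tight cage to \emph{every} vertex of $Y$. At a witness vertex this is built into the cage's definition, but at a general vertex $v \in Y$ one must transport the join decomposition along convex paths inside $Y$. I would do this by induction on combinatorial distance inside $Y$ from a fixed witness: straight links guarantees that each edge of $Y$ extends in a controlled way, and that crossing a hyperplane while staying inside $Y$ preserves the transversality pattern of $C$, so the join decomposition at one endpoint forces the join decomposition at the other. A subsidiary technical point will be checking that ``link taken in $X$'' (not in $Y$) is a join; this is where it matters that $C$ tightly controls every hyperplane through vertices of $Y$, not merely the ones lying in $Y$ itself.
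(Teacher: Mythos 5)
Your plan starts from the right high-level idea — use the tight-cage characterization of $\partial X \neq B(X)$ for straight-links complexes and read off $Y$ from a tight cage — but the set you choose for $Y$ is wrong, and this sinks the argument. You take $Y$ to be ``the subcomplex spanned by the vertices of $X$ lying in every halfspace of $C$,'' i.e.\ $\bigcap_{k \in C} k$. The paper's $Y$ is not this: a tight cage is a pair $(\mathcal{S}, \mathcal{T})$, and its core is
\[
Y \;=\; \bigcap_{t \in \mathcal{T}} \mathcal{C}^+(t) \;\cap\; \bigcap_{s \in \mathcal{S}} s,
\]
where $\mathcal{C}^+(t)$ denotes the \emph{positive carrier} $\hat{t}\times\{1\}$ of the halfspace $t$, a thin slab of $t$ hugging the hyperplane $\hat{t}$. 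This is a small, wall-like convex subset; your $\bigcap_{k\in C}k$ is an entire region of $X$. Vertices deep inside $\bigcap_{k\in C}k$, far from all the hyperplanes $\hat{\mathcal{T}}$, have no reason to have links that are joins — the join decomposition used in the paper comes from the fact that every vertex of the core is adjacent, simultaneously, to all the hyperplanes in $\hat{\mathcal{T}}$, which ceases to be true once you leave the carriers.

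A second issue, following on the first: you describe a tight cage loosely as ``a finite set of halfspaces ... which does not cage any hyperplane,'' which is the weaker notion of a cage that fails to contain a hyperplane. A tight cage carries extra structure; in particular, tight cage condition (4) says that every hyperplane dual to an edge adjacent to the core is either in $\hat{\mathcal{S}}\cup\hat{\mathcal{T}}$ or crosses all of $\hat{\mathcal{T}}$. This condition is exactly what makes the join-at-every-vertex statement drop out immediately (take $V_2$ = vertices of the link coming from $\hat{\mathcal{T}}$, and $V_1$ = everything else; condition (4) plus condition (2) show these two sets are fully joined). There is no ``propagation along convex paths'' needed, nor does straight links enter at this stage. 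Straight links is used \emph{earlier}, inside Theorem~\ref{thm_char} (via Proposition~\ref{prop_tight cage}), to prove that $\partial X \neq B(X)$ forces the existence of a tight cage in the first place; Proposition~\ref{prop_core_links}, which supplies the join decomposition, is a direct consequence of the tight-cage axioms and uses no straight-links input at all. Your step (ii), which tries to make straight links produce square-fillings at each vertex, is solving a problem that the correct definition of $Y$ has already solved. Likewise, unboundedness of the core is literally tight cage condition (3), so your Roller-closure argument for (i) is unnecessary machinery once the right $Y$ is on the table.
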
 

We apply this result to infinite right-angled Coxeter groups and right-angled Artin group. It follows that, as long such a given group is not a direct product, then the usual CAT(0) cube complex, $X$, it acts on satisfies $B(X) = \partial X$.

\begin{introtheorem}[Theorem \ref{thm_racg_app}, Theorem \ref{thm_raag_app}] \label{intro_thm_racg_raag}
	Let $X$ either be the Davis complex of an infinite right-angled Coxeter group or the universal cover of the Salvetti complex of a right-angled Artin group. Then $B(X) = \partial X$ if and only if the corresponding right-angled Coxeter/Artin group does not split as a direct product.
\end{introtheorem}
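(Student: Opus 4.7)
The plan is to handle the two directions of the biconditional separately. I will first show that if $G$ splits nontrivially as a direct product then $\partial X \neq B(X)$ by exhibiting an explicit non-regular ultrafilter in $\partial X$; conversely, assuming $\partial X \neq B(X)$, I will apply Theorem~\ref{intro_thm_joins} and deduce that the defining graph of $G$ is a graph join, which forces $G$ to split as a direct product.

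For the ``splits implies unequal'' direction, suppose $G = G_1 \times G_2$ nontrivially. Then $X$ decomposes $G$-equivariantly as $X = X_1 \times X_2$, where $X_i$ is the corresponding Davis or Salvetti cube complex for $G_i$, and every hyperplane of $X$ is cylindrical in one of the two factors. Since $G$ is infinite, at least one factor, say $X_1$, is unbounded, so $\partial X_1 \neq \emptyset$. Pick $\mu_1 \in \partial X_1$ and a vertex $v_2 \in X_2$, and form the product ultrafilter $\mu$ which agrees with $\mu_1$ on $X_1$-cylindrical halfspaces and with the principal ultrafilter at $v_2$ on $X_2$-cylindrical halfspaces. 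Then $\mu \in \partial X$, since $\mu_1$ is at infinity; but $\mu$ contains a halfspace of the form $X_1 \times h$ with $h$ a minimal halfspace of $X_2$ at $v_2$, and this halfspace admits no properly smaller halfspace in $\mu$. Hence $\mu$ violates the regularity condition characterizing $B(X)$, giving $\mu \in \partial X \setminus B(X)$.

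For the converse direction, assume $\partial X \neq B(X)$. Both the Davis complex of a finitely generated RACG and the universal cover of the Salvetti complex of a finitely generated RAAG are locally finite, cocompact, and satisfy the straight links hypothesis (a routine structural check). Theorem~\ref{intro_thm_joins} then produces an unbounded convex subcomplex $Y \subseteq X$ such that the link of every vertex of $Y$ in $X$ is a nontrivial simplicial join. Since $G$ acts vertex-transitively on $X$, all vertex links are isomorphic, so in fact \emph{every} vertex has a join link. For the Davis complex with defining graph $\Gamma$, the vertex link is (the barycentric subdivision of) the flag complex on $\Gamma$, and this flag complex is a nontrivial join if and only if $\Gamma = \Gamma_1 \ast \Gamma_2$ is itself a nontrivial graph join, giving $W = W_{\Gamma_1} \times W_{\Gamma_2}$. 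For the Salvetti complex the vertex link has vertex set $S \sqcup S^{-1}$ and simplices corresponding to cliques of $\Gamma$ with a sign for each generator; since $\{s, s^{-1}\}$ is never an edge of the link, any simplicial join decomposition must keep $s$ and $s^{-1}$ in the same factor, yielding a partition $S = S_1 \sqcup S_2$ that exhibits $\Gamma$ as a graph join, and hence $A_\Gamma$ as a direct product.

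The main obstacle I anticipate is rigorously verifying that the exhibited ultrafilter $\mu = (\mu_1, v_2)$ actually lies outside $B(X)$; while it is morally clear that an ultrafilter ``at infinity in only one factor'' cannot be in the Poisson boundary, this requires careful invocation of a precise characterization of $B(X)$ from Nevo--Sageev, FLF, or Theorem~\ref{intro_thm_char} of the present paper. A secondary technical point is verifying that the Davis and Salvetti complexes indeed satisfy the straight links hypothesis of Theorem~\ref{intro_thm_joins}, but this should be routine.
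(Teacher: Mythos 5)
Your overall strategy coincides with the paper's: both directions are handled as you describe, with the ``product $\Rightarrow$ unequal'' direction coming from a reducible-case lemma and the ``not-a-join $\Rightarrow$ equal'' direction coming from Corollary~\ref{cor_convex_join_subset}. However, there are two genuine gaps in the proposal.

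First, and most substantively, you do not actually verify that $\mu \in \partial X \setminus B(X)$, and the reason you give --- that $\mu$ contains a minimal halfspace, hence ``violates the regularity condition'' --- is not sufficient. Recall that $B(X)$ is defined as the \emph{closure} of the set of non-terminating ultrafilters in $\mathcal{U}(X)$, not the set of non-terminating ultrafilters itself. Having a minimal halfspace only shows $\mu$ is not non-terminating; to place $\mu$ outside the closure you must rule out the possibility that $\mu$ is a limit of non-terminating ultrafilters. The paper's Lemma~\ref{lemma_reducible_case} does exactly this extra work: given the finite set $\bar{M}$ of minimal halfspaces in $\mu$ coming from the vertex factor, it shows that any non-terminating ultrafilter $\beta$ containing all of $\bar{M}$ would violate the choice condition. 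This is the crux of the argument and cannot be dismissed as a ``careful invocation'' to be supplied later. You do flag this obstacle, which is to your credit, but it remains an unfilled gap.

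Second, your claim that the straight links hypothesis is a ``routine structural check'' for Davis complexes is wrong as stated. Proposition~\ref{prop_racg_straight_links} shows $\Sigma_\Gamma$ has straight links \emph{iff} $\Gamma^c$ has no isolated vertex \emph{iff} $\Sigma_\Gamma$ is essential --- these can all fail (e.g.\ when some generator commutes with all others). The argument is still recoverable: if $\Gamma^c$ has an isolated vertex $s$ and $|\Gamma| > 1$ then $\Gamma = \{s\} \star (\Gamma \setminus \{s\})$ is a join, so the contrapositive conclusion holds anyway, but this case-split must appear. Similarly, in your ``splits implies unequal'' direction you only note that one factor is unbounded; the paper's Lemma~\ref{lemma_reducible_case} requires one factor to be essential (so that it supports a non-terminating ultrafilter), and Theorem~\ref{thm_racg_app} carefully rearranges the join decomposition so that $\Gamma_2^c$ has no isolated vertices to guarantee this. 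Your $\mu_1 \in \partial X_1$ does exist if $X_1$ is unbounded, but to invoke the reducible-case lemma you need the non-terminating direction, not just any boundary direction.
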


The outline of this article is as follows. Section \ref{sec_background} gives the relevant background regarding CAT(0) cube complexes and the boundaries we consider. After this, we introduce the notion of caged hyperplanes in Section \ref{sec_caged_hyps}. In particular, some implications in Theorem \ref{intro_thm_char} are shown. Section \ref{sec_straight_links} concerns the straight links hypothesis. There it is shown that CAT(0) cube complexes with straight links have many desirable structural properties. The following section, Section \ref{sec_tight_cages}, introduces the notion of tight cages. Tight cages are used to give an additional, very useful, characterization of when the Roller boundary is equal to $B(X)$ when $X$ has straight links. 

As Euclidean and reducible CAT(0) cube complexes behave fundamentally differently than their counterparts, we treat these separately in Section \ref{sec_euc_red}. In Section \ref{sec_characterization} we compile the results from previous sections to prove the main characterization result, Theorem \ref{intro_thm_char}. As an application, we describe a straightforward approach to show the Roller boundary is equal to $B(X)$ in Section \ref{sec_applications} which works for many well studied CAT(0) cube complexes. We illustrate this approach by proving Theorem \ref{intro_thm_racg_raag}. The final section is dedicated to proving Theorem \ref{intro_thm_2d}.

\subsection*{Acknowledgments:}
I am deeply thankful to Michah Sageev for directing me towards this area of research and for the many fruitful discussions regarding this work. I would also like to thank Nir Lazarovich for helpful discussions regarding cube complexes. I am very thankful to the anonymous referee for excellent suggestions which led to the simplification of several arguments and a strengthening of Proposition \ref{prop_racg_straight_links}.

\section{Background} \label{sec_background}

\subsection{CAT(0) cube complexes}

A \textit{CAT(0) cube complex}, $X$, is a simply connected cell complex whose cells consist of Euclidean unit cubes, $[ -\frac{1}{2}, \frac{1}{2}]^d$, of varying dimension $d$. Additionally, the link of each vertex is a flag complex, i.e., any set of vertices which are pairwise connected by an edge, spans a simplex. All CAT(0) cube complexes considered in this article are assumed to be connected. We refer the reader to \cite{CS} and \cite{Wise} for a detailed background on cube complexes. All facts stated in this background regarding cube complexes can be found in these references.

\subsubsection{Definitions}
We say $X$ is \textit{finite-dimensional} if there is an upper bound on the dimension of cubes in $X$. We say $X$ is \textit{locally finite}, if every vertex in the $1$--skeleton of $X$ has finite valence. Furthermore, we say $X$ is \textit{cocompact}, if the group of isometries of $X$ acts cocompactly on $X$.

A \textit{midcube} of a cube, $[ -\frac{1}{2}, \frac{1}{2}]^d$, is the restriction of a coordinate of the cube to 0. A \textit{hyperplane}, $\hat{h}$, is a connected subspace of $X$ with the property that for each cube $C$ in $X$, $\hat{h} \cap C$ is a midcube or $\hat{h} \cap C = \emptyset$. If $\hat{h} \cap e \neq \emptyset$ for some edge $e$ in $X$, then we say $\hat{h}$ is dual to $e$.  It is a basic fact in the theory of CAT(0) cube complexes that $\hat{h}$ has itself the structure of a CAT(0) cube complex where each midcube in $\hat{h}$ is considered as a cube. If $X$ is finite dimensional, the cube complex corresponding to $\hat{h}$ has strictly smaller dimension than $X$.

Given any hyperplane $\hat{h}$ in $X$, $X \setminus \hat{h}$ consists of exactly two distinct components. The closure of such a component is called a \textit{halfspace}. We denote the two halfspaces associated to $\hat{h}$ by $h$ and $h^*$. Furthermore, we say that $h$ is a choice of orientation for $\hat{h}$. We take the following convention for notation: we always designate halfspaces by lowercase letters (e.g., $h$) and denote their corresponding hyperplanes by the same hatted letter (e.g., $\hat{h}$). Similarly, if $H$ is a collection of halfspaces, then $\hat{H}$ will always denote the corresponding set of hyperplanes. Two halfspaces, $h$ and $k$ are \textit{comparable} if either $h \subset k$ or $k \subset h$. Otherwise, $h$ and $k$ are \textit{incomparable}.

Let $\hat{h}$ be a hyperplane and $h$ a choice of halfspace for $\hat{h}$. The \textit{carrier}, $\mathcal{C}(\hat{h})$, of the hyperplane $\hat{h}$ is the set of all cubes in $X$ that have non-trivial intersection with $\hat{h}$. It follows that $\mathcal{C}(\hat{h})$ is isometric to $\hat{h} \times I$, where $I = [0,1]$. The subcomplexes, $\hat{h} \times \{0\}$ and $\hat{h} \times \{1\}$, of $\mathcal{C}(\hat{h})$ are each isometric to $\hat{h}$ and are each contained in a distinct component of $X \setminus \hat{h}$. We assume our labeling is such that $\hat{h} \times \{1\}$ is contained in $h$ and that $\hat{h} \times \{0\}$ is contained in $h^*$. We let $\mathcal{C}^+(h)$ denote $\hat{h} \times \{1\} \subset \mathcal{C}(\hat{h})$, and we say that $\mathcal{C}^+(h)$ the \textit{positive carrier of $h$}. Similarly, we let $\mathcal{C}^+(h^*)$ denote $\hat{h} \times \{0\} \subset \mathcal{C}(\hat{h})$ and say it is the positive carrier of $h^*$.

In this article we will exclusively work with the combinatorial metric on the 1--skeleton of $X$. By a \textit{path} in $X$, we mean a path in the $1$--skeleton of $X$ consisting of a sequence of edges. The length of a path is defined to be the number of edges in the path. A \textit{geodesic} in $X$ is a path of minimal length out of all possible paths with the same endpoints. It is an important fact that a path in $X$ is geodesic if and only if every hyperplane in $X$ is dual to at most one edge of the path.

Finally, we say that $X$ is \textit{essential}, if given any halfspace $h$ in $X$, there are vertices in $h$ arbitrarily far from $\hat{h}$.

\subsubsection{Convexity}

Let $X$ be a CAT(0) cube complex. A subcomplex $Y \subset X$ is \textit{convex}, if every geodesic between two vertices of $Y$ is contained in $Y$. An important example of a convex subcomplex is the carrier of a hyperplane in $X$.

A version of Helly's property for CAT(0) cube complexes will be repeatedly used throughout this paper. We refer the reader to \cite{Ger_cubings} and \cite{Rol} for a proof. There is also a discussion regarding this property in \cite{CS}.

\begin{theorem}[Helly's Property]
	If $A_1, A_2, \dots A_n$ are convex, pairwise intersecting, subcomplexes (or alternatively are hyperplanes) of a CAT(0) cube complex, then $\bigcap_{i=1}^{n}A_i \neq \emptyset$. 
\end{theorem}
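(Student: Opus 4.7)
The plan is an induction on $n$ that reduces everything to the three-subcomplex case, which I handle via the combinatorial nearest-point (gate) projection. Recall that for a convex subcomplex $A \subset X$ and vertex $v \in X$ there is a unique vertex $g_A(v) \in A$ minimizing the combinatorial distance to $v$, and the hyperplanes crossed by any geodesic from $v$ to $g_A(v)$ are precisely those $\hat{k}$ admitting an orientation with $A \subset k$ and $v \in k^*$. I would first record this standard characterization, which follows from the description of geodesics as paths crossing each hyperplane at most once.

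The central lemma is: if $A, B$ are convex subcomplexes with $A \cap B \neq \emptyset$ and $v \in B$, then $g_A(v) \in B$. To prove it, I use that $B$ equals the intersection of all halfspaces $l$ containing it. For each such $l$, the nonempty set $A \cap B \subset l$ forces $A \not\subset l^*$, so $\hat{l}$ is not among the hyperplanes crossed by the geodesic from $v$ to $g_A(v)$; combined with $v \in B \subset l$, this gives $g_A(v) \in l$, hence $g_A(v) \in B$. The $n = 3$ case of Helly is now immediate: choose $v \in A_1 \cap A_2$ and set $v' := g_{A_3}(v)$; applying the lemma with $(A,B) = (A_3, A_1)$ and $(A_3, A_2)$ gives $v' \in A_1 \cap A_2$, while $v' \in A_3$ by construction. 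For $n \geq 4$, I would replace $A_{n-1}$ and $A_n$ by their (convex) intersection, whose pairwise intersection with each remaining $A_i$ is supplied by the $n=3$ case applied to $A_i, A_{n-1}, A_n$, and finish by the inductive hypothesis applied to the shortened list.

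For the hyperplane version, two pairwise-intersecting hyperplanes cross in a square, so the carriers $\mathcal{C}(\hat{h}_i)$ are pairwise-intersecting convex subcomplexes; the already-proved subcomplex Helly produces a vertex $v$ common to all carriers, and at $v$ each $\hat{h}_i$ is dual to a unique edge $e_i$. I would then promote $v$ to a vertex of an $n$-cube simultaneously crossed by every $\hat{h}_i$ via the flag link condition. I expect the main obstacle to be precisely this last step, since an arbitrary common vertex of the carriers need not witness a local crossing of $\hat{h}_i$ and $\hat{h}_j$: a priori the global crossing may occur far from $v$, so the edges $e_i, e_j$ need not span a square in the link of $v$. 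I would address this either by selecting $v$ more carefully (e.g., as a gate image of a specific square of crossing, inheriting the local crossing structure) or by applying the subcomplex Helly directly to an appropriate family of halfspaces $h_i^{\pm}$, exploiting that the crossing hypothesis for $\hat{h}_i, \hat{h}_j$ provides all four intersections $h_i^{\pm} \cap h_j^{\pm}$ and thereby directly extracts a vertex of the desired $n$-cube.
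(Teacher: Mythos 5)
The paper does not supply its own proof of Helly's property; it cites Gerasimov and Roller and refers to \cite{CS} for discussion, so there is no in-text argument to compare against. Your proposal is a correct, essentially standard self-contained proof. The gate-projection lemma and its use to get the three-set case, followed by the replacement $\{A_{n-1}, A_n\} \rightsquigarrow A_{n-1}\cap A_n$ and induction, are exactly right; the characterization of hyperplanes separating $v$ from $g_A(v)$ that you invoke is a routine consequence of Lemma~\ref{lemma_convexity2} (apply it with the two convex sets $\{v\}$ and $A$). One refinement worth noting for the hyperplane version: the difficulty you flag does not actually arise. If $v\in\bigcap_i\mathcal{C}(\hat h_i)$ is \emph{any} common vertex of the carriers and $e_i,e_j$ are the edges at $v$ dual to $\hat h_i,\hat h_j$, then because $\hat h_j$ meets the convex subcomplex $\mathcal{C}(\hat h_i)$, Lemma~\ref{lemma_convexity1} forces $e_j\subset\mathcal{C}(\hat h_i)\cong\hat h_i\times[0,1]$, and a vertical edge ($e_i$) together with an adjacent horizontal edge ($e_j$) of a carrier always span a square; the flag condition on the link of $v$ then yields the $n$-cube, with no need to choose $v$ specially. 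By contrast, your alternative of running Helly on the family $h_i^{\pm}$ does not quite close as phrased: the combinatorial halfspaces $h_i$ and $h_i^*$ are disjoint as vertex sets, so the family is not pairwise intersecting. One can repair it by thickening each halfspace to $h_i\cup\mathcal{C}(\hat h_i)$, but then the conclusion is again just a vertex of $\bigcap_i\mathcal{C}(\hat h_i)$, and you still need the local-square step just described — so the carrier route you listed first is the one to use.
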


The next two lemmas are known by experts. We are not aware of an exact reference so we provide proofs. 

\begin{lemma} \label{lemma_convexity1}
	Let $Y$ be a convex subcomplex of a CAT(0) cube complex. Let $e$ be an edge adjacent to $Y$, and let $\hat{h}$ be the hyperplane dual to $e$. If $\hat{h}$ intersects $Y$, then $e \subset Y$.
\end{lemma}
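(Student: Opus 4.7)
The plan is to label the two endpoints of $e$ as $v$ (the one in $Y$) and $w$, and to argue that $w$ must also lie in $Y$, which suffices since $Y$ is a subcomplex and then contains the edge $e$ between its vertices $v$ and $w$. The strategy is to use the standard halfspace characterization of convex subcomplexes: a vertex $u \in X$ lies in the convex subcomplex $Y$ if and only if for every halfspace $k$ with $Y \subset k$, we have $u \in k$; equivalently, $u \in Y$ iff no hyperplane that misses $Y$ separates $u$ from $Y$.

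I would then argue by contradiction. Assume $w \notin Y$. By the characterization above, there is a halfspace $k$ with $Y \subset k$ but $w \in k^*$. The hyperplane $\hat{k}$ does not meet $Y$, since $Y$ is contained entirely in one of its sides. However, $v \in Y \subset k$ while $w \in k^*$, so $\hat{k}$ separates the endpoints of the edge $e$. Since an edge is dual to exactly one hyperplane, this forces $\hat{k} = \hat{h}$. This contradicts the hypothesis that $\hat{h}$ intersects $Y$, completing the argument.

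The only real step to justify is the halfspace characterization of a convex subcomplex, which is a standard consequence of Helly's property and of the fact that a convex subcomplex of a CAT(0) cube complex equals the intersection of the halfspaces containing it; the author cites \cite{CS}, \cite{Wise} and \cite{Rol} for such facts, so I would invoke this without proof. Apart from that, the argument is just combinatorics of a single edge and the hyperplane dual to it, so no geodesic or projection machinery is needed, and there is no real obstacle to overcome.
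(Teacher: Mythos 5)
Your proof is correct, and it takes a genuinely different route from the paper's. The paper picks an edge $f \subset Y$ dual to $\hat{h}$, constructs a geodesic from $u_2$ (your $w$) to an endpoint of $f$ inside the positive carrier $\mathcal{C}^+(h^*)$, concatenates with $e$ to get a longer geodesic that lies in $Y$ by convexity, and concludes $w \in Y$ from that. You instead invoke the standard halfspace characterization of convex subcomplexes: $Y = \bigcap\{k : Y \subset k\}$, so if $w \notin Y$ there is a halfspace $k \supset Y$ with $w \in k^* \setminus \hat{k}$; since the edge $e$ joins $v \in k$ to $w \in k^*$, the unique hyperplane dual to $e$ is $\hat{k}$, forcing $\hat{h} = \hat{k}$, which cannot intersect $Y$ (a cube of $Y$ meeting $\hat{k}$ in a midcube would not be contained in $k$), contradicting the hypothesis. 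Your approach is shorter and avoids any geodesic or carrier machinery, at the cost of invoking the halfspace characterization of convexity as a black box; the paper's proof is more self-contained, using only the characterization of geodesics by hyperplane crossings and the convexity of carriers. Both are sound.
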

\begin{proof}
	Let $u_1, u_2$ be the endpoints of $e$, with $u_1 \in Y$. Let $f$ be an edge in $\mathcal{C}(\hat{h}) \cap Y$. Let $v_1, v_2$ be the endpoints of $f$. By possibly relabeling, we may assume that $v_1$ and $u_1$ are both contained in the choice of halfspace, $h$.
	
	Let $\gamma$ be a geodesic from $u_2$ to $v_2$. As every hyperplane is dual to at most one edge of $\gamma$, $\gamma \subset \mathcal{C}^+(h^*)$. Let $p$ be the path obtained by concatenating the edge $e$ with the geodesic $\gamma$. It follows $p$ is also geodesic, as $\hat{h}$, and any other hyperplane, intersects $p$ exactly once. Furthermore, as $Y$ is convex, $p$ is contained in $Y$. We thus have that $u_2 \in p \subset Y$. Hence, the endpoints of $e$ are in $Y$. As $Y$ is convex, $e$ is also in $Y$.
\end{proof}

The following lemma states that given a shortest path between two convex subcomplexes, no hyperplane intersects both the path and one of the subcomplexes. The proof of this lemma is the only place in this article where we make use of disk diagrams. We refer the reader to \cite{Wise} for a background on disk diagrams.

\begin{lemma} \label{lemma_convexity2}
	Let $X$ be a CAT(0) cube complex, and let $A$ and $B$ be convex subcomplexes of $X$. Let $\gamma$ be a path from $A$ to $B$ that has minimal length out of all such possible paths. Let $\hat{h}$ be a hyperplane dual to an edge of $\gamma$. Then $\hat{h}$ is not dual to any edge in $A$ or in $B$.
\end{lemma}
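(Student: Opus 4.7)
The plan is to suppose, for contradiction, that $\hat{h}$ is dual both to an edge $e$ of $\gamma$ and to an edge of $A$ (the case of $B$ is symmetric), and then to construct a strictly shorter path from $A$ to $B$. Write $\gamma$ as a path from $a_0 \in A$ to $b_0 \in B$, and set $n=|\gamma|$. Any shorter path between these endpoints would itself go from $A$ to $B$, so $\gamma$ is a geodesic; hence $\hat{h}$ separates $a_0$ from $b_0$, and we orient so $a_0 \in h$ and $b_0 \in h^*$. Because $\hat{h}$ meets $A$, we pick $a' \in A \cap h^*$ and, by convexity of $A$, a geodesic $\alpha$ in $A$ from $a_0$ to $a'$, which crosses $\hat{h}$ exactly once. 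Let $\delta$ be a geodesic from $a'$ to $b_0$; since $a',b_0 \in h^*$, the path $\delta$ does not cross $\hat{h}$.

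Fill the closed loop $\alpha \cdot \delta \cdot \gamma^{-1}$ with a minimal-area (reduced) disk diagram $D$. Every dual curve in $D$ is an embedded arc joining two boundary edges dual to a common hyperplane, and since each of $\alpha$, $\delta$, $\gamma$ is a geodesic, no dual curve has both its endpoints on a single one of these subpaths. Hence each dual curve has one of three types $\alpha\text{-}\delta$, $\alpha\text{-}\gamma$, $\delta\text{-}\gamma$, with counts $p,q,r$ satisfying $|\alpha|=p+q$, $|\delta|=p+r$, $|\gamma|=q+r$. Since $\hat{h}$ contributes an $\alpha\text{-}\gamma$ curve, $q \geq 1$, and thus
\[
|\delta| \;=\; |\alpha|+|\gamma|-2q \;\leq\; |\alpha|+n-2.
\]
So if we can arrange $|\alpha|=1$, then $\delta$ is a path from $A$ to $B$ of length at most $n-1$, contradicting the minimality of $\gamma$.

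To arrange $|\alpha|=1$: if $a_0 \in \mathcal{C}(\hat{h})$ then the edge at $a_0$ dual to $\hat{h}$ is adjacent to $A$, and since $\hat{h}$ meets $A$, Lemma~\ref{lemma_convexity1} places that edge inside $A$, producing an $a'$ with $|\alpha|=1$. If instead $a_0 \notin \mathcal{C}(\hat{h})$, some hyperplane $\hat{k}$ separates $a_0$ from $\hat{h}$, so $\hat{h} \subset k$ with $a_0 \in k^*$; then $h^* \subset k$ forces $b_0 \in k$, so $\hat{k}$ is dual to an edge of $\gamma$, and if $\hat{k}$ missed $A$ then $A \subset k^*$ would contradict $\emptyset \neq A \cap h^* \subset k$. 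Thus $\hat{k}$ satisfies the same hypotheses as $\hat{h}$ but sits in a strictly smaller halfspace about $a_0$; iterating yields a strictly nested chain of such halfspaces, which must terminate because $\gamma$ has only finitely many dual hyperplanes, at some $\hat{h}^*$ with $a_0 \in \mathcal{C}(\hat{h}^*)$. The main obstacle is the dual-curve count in the reduced diagram, which depends on the standard reduced-diagram facts that no two dual curves of the same hyperplane cobound a bigon and that no geodesic boundary subpath contains two edges dual to the same hyperplane; with these in hand, the rest is bookkeeping about nested halfspaces and one application of Lemma~\ref{lemma_convexity1}.
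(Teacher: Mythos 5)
Your proof is correct, and it takes a genuinely different route from the paper's. Both arguments are by contradiction via a disk diagram and both produce a strictly shorter competitor path from $A$ to $B$, and both invoke Lemma~\ref{lemma_convexity1} at a key moment. But the engines differ. The paper fixes $\hat{h}$ to be the offending hyperplane whose dual edge on $\gamma$ is \emph{closest to $A$}, builds a diagram with sides $\gamma'$, $\eta\subset\mathcal{C}^+(h)$, $\zeta\subset A$, and then combines \cite[Corollary 2.8]{Wise} with the minimality choice of $\hat{h}$ to force the diagram to degenerate ($\eta=\zeta\cup\gamma'$); this places $a=\gamma\cap A$ in $\mathcal{C}^+(h)$, and Lemma~\ref{lemma_convexity1} plus the isometry $\mathcal{C}^+(h)\cong\mathcal{C}^+(h^*)$ lets it reroute $\gamma$ through $A$ to get a shorter path explicitly. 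You instead use the clean dual-curve (equivalently, median-distance) identity $|\delta|=|\alpha|+|\gamma|-2q$ for a geodesic triangle, observe $q\ge 1$ because $\hat{h}$ is dual to both $\alpha$ and $\gamma$, and conclude once $|\alpha|=1$. Where the paper's ``choose $\hat h$ closest to $A$'' does the normalizing work, your iteration over the nested chain of hyperplanes separating $a_0$ from $\hat h$ (each necessarily dual to $\gamma$ and to $A$, which you verify correctly) does the same work, terminating at a hyperplane whose carrier contains $a_0$; at that point Lemma~\ref{lemma_convexity1} supplies the length-one $\alpha$ inside $A$. Your version trades Wise's Corollary~2.8 for the more elementary median/counting identity, at the cost of the extra nesting induction; both are sound, and each is a reasonable choice depending on whether one prefers to cite a black-box structural lemma about diagrams or to do the bookkeeping by hand.
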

\begin{proof}
	Suppose for a contradiction, that $\hat{h}$ is dual to both an edge, $e$, of $\gamma$ and an edge, $f$, of $A$. Furthermore, we choose $\hat{h}$ so that $e$ is closest to $A$ out of all such possible choices.
	
	Let $u_1$ and $u_2$ be the endpoints of $e$. Suppose $u_1$ is closer to $A$ than $u_2$. Let $h$ be the halfspace corresponding to $\hat{h}$ which contains $u_1$. Let $v_1$ and $v_2$ be the endpoints of $f$, labeled such that $v_1 \in h$. Let $a = \gamma \cap A$, and let $\gamma'$ be the subpath of $\gamma$ from $a$ to $u_1$.
	
	Let $D$ be a minimal area disk diagram with boundary $\gamma' \cup \eta \cup \zeta$, where $\eta$ is a geodesic from $u_1$ to $v_1$ and $\zeta$ is a geodesic from $v_1$ to $a$. Furthermore, suppose $D$ is minimal out of all possible choices for the geodesics $\eta$ and $\zeta$. As hyperplane carriers are convex, $\eta$ is contained in $\mathcal{C}^+(h)$. Similarly, $\zeta \subset A$ as $A$ is convex.
	
	We now apply \cite[Corollary 2.8]{Wise}. In that corollary we take the convex subcomplexes to be $\mathcal{C}^+(h)$ and $A$, and the paths between these subcomplexes to be $\gamma'$ and the length $0$ path, $v_1$. The conclusion of this corollary guarantees that two distinct dual curves in $D$ dual to $\eta$ do not intersect in $D$. Furthermore, by our choice of $\hat{h}$, no curve is dual to both $\gamma'$ and $\zeta$ (or else we could have chosen a hyperplane dual to an edge of $\gamma$ that is closer to $A$). Thus, every dual curve in $D$ has one end dual to $\eta$ and the other end dual to $\zeta \cup \gamma'$, and no two dual curves in $D$ intersect. Thus, we must have that $\eta = \zeta \cup \gamma'$. 
	
	Since $\mathcal{C}^+(h)$ is convex and $\zeta \cup \gamma'$ is a geodesic (as $\eta = \zeta \cup \gamma'$), it follows that $a \in \mathcal{C}^+(h)$. Let $f'$ be the edge dual to $\hat{h}$ that is adjacent to $a$. Let $b \in \mathcal{C}^+(h^*)$ be the other endpoint of $f'$. It follows by Lemma \ref{lemma_convexity1} that $b \in A$. 
	
	Let $\gamma'' \subset \mathcal{C}^+(h^*)$ be a geodesic from $b$ to $u_2$. Note that $|\gamma'| = |\gamma''|$ as $\mathcal{C}^+(h)$ and $\mathcal{C}^+(h^*)$ are isometric. However, if we now replace the subpath $e \cup \gamma'$ with the path $\gamma''$ in $\gamma$, we obtain a strictly smaller path from $A$ to $B$. This is a contradiction. 
	
	An argument showing $\hat{h}$ is not dual to an edge in $B$ is identical to the one given.
\end{proof}

\subsubsection{Euclidean and reducible complexes}
We say the CAT(0) cube complex $X$ is \textit{irreducible} if it is not the product of two CAT(0) cube complexes. Otherwise, we say that $X$ is \textit{reducible}.

A $n$--dimensional \text{flat} is an isometrically embedded copy of $\mathbb{E}^n$ (in the CAT(0) metric), where $n \ge 1$. A CAT(0) cube complex $X$ is \textit{Euclidean} if $X$ contains an $\text{Aut}(X)$ invariant flat. If $X$ is Euclidean, the \textit{Euclidean dimension} of $X$ is the largest $n$ for which $X$ contains an $\text{Aut}(X)$ invariant $n$--dimensional flat.

As different arguments are often required for reducible and Euclidean CAT(0) cube complexes, we treat such complexes separately in Section \ref{sec_euc_red}.

\subsection{The Roller and B(X) boundaries}
The definitions in this subsection follow those in \cite{NS}.

Let $X$ be a CAT(0) cube complex. An \textit{ultrafilter on $X$} is a collection, $\alpha$, of halfspaces in $X$ such that the following two conditions are satisfied:
\begin{enumerate}
	\item(choice condition) For every halfspace $h$ in $X$, either $h \in \alpha$ or $h^* \in \alpha$ (but not both).
	\item(consistency condition) Given halfspaces $h$ and $k$ in $X$ such that $h \in \alpha$ and $h \subset k$, it follows that $k \in \alpha$.
\end{enumerate}

The ultrafilter $\alpha$ satisfies the \textit{descending chain condition (DCC)} if every nested sequence of halfspaces in $\alpha$ contains a minimal element.

Let $\mathcal{U} = \mathcal{U}(X)$ be the set of ultrafilters on $X$. We may identify vertices in $X$ with ultrafilters in $\mathcal{U}$ satisfying the descending chain condition. Given a vertex, $v$, in $X$ we let $\alpha_v$ denote the ultrafilter corresponding to $v$. 

Let $\partial X$ denote the set of ultrafilters in $\mathcal{U}$ that do not satisfy the descending chain condition. We call $\partial X$ the \textit{Roller boundary of $X$}. 

Suppose that $X$ is additionally locally finite. We now define a metric on $\mathcal{U}$. Fix a base vertex $b \in X$. Given a hyperplane, it's distance from $b$ is given by:
\[d(\hat{h}, b) = | \big\{ \text{hyperplanes separating b from } \hat{h} \big\} | + 1 \]

Let $\alpha, \beta \in \mathcal{U}$ be ultrafilters. A hyperplane $\hat{h}$ separates $\alpha$ from $\beta$ if $h \in \alpha$ and $h^* \in \beta$ for some choice of halfspace $h$ for $\hat{h}$. We define the distance between two ultrafilters as:

\[d(\alpha, \beta) = \sup \Big\{ \frac{1}{ d(\hat{h}, b) } ~|~ \hat{h} \text{ separates } \alpha \text{ from } \beta \Big\} \]

Given a halfspace $h \subset X$, we define the open neighborhood, $\mathcal{U}_h$, to be the set of all ultrafilters that contain $h$. If $H$ is a finite subset of halfspaces in $X$, we define the open set $\mathcal{U}_H = \bigcap_{h \in H}\mathcal{U}_h$. Such open sets, together with $\mathcal{U}$, form a basis for the topology on $\mathcal{U}$.

Let $U \subset \mathcal{U}$ be an open set. Given a vertex $v \in X$, we say $v \in U$ if $U$ contains the ultrafilter, $\alpha_v$, corresponding to $v$. Given a hyperplane $\hat{h}$ in $X$, we say $\hat{h} \cap U \neq \emptyset$ if there exist adjacent vertices $v, v' \in U$ such that the edge between them is dual to $\hat{h}$. Similarly, we say $\hat{h} \cap U = \emptyset$ if no such pair of vertices exist. Finally, we say $\hat{h} \subset U$ if the endpoints of any edge dual to $\hat{h}$ are in $U$.

We say an ultrafilter $\alpha$ is \textit{non-terminating} if given any halfspace $h \in \alpha$, there exists a halfspace $k \in \alpha$ such that $k \subset h$. Denote by $\mathcal{U}_{NT} = \mathcal{U}_{NT}(X)$  the set of non-terminating ultrafilters in $\partial X$. The set \textit{$B(X)$} is defined to be the closure of $\mathcal{U}_{NT}$ in $\mathcal{U}$.

The following lemma follows from a straightforward application of the definitions.
\begin{lemma} \label{lemma_finite_subsets_in_limit}
	Let $X$ be a locally finite CAT(0) cube complex. A sequence of ultrafilters $\alpha_1, \alpha_2, \alpha_3 \dots$ in $X$ limit to $\alpha$ if and only if for every finite set of halfspaces $H \subset \alpha$, there exists an $N$ such that $H \subset \alpha_n$ for all $n \ge N$.
\end{lemma}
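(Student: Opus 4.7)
The plan is to prove both implications directly from the definitions, with local finiteness entering essentially only in the converse.

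For the forward direction, I would take any finite $H = \{h_1, \dots, h_k\} \subset \alpha$ and set $D = \max_i d(\hat h_i, b)$, which is finite because $H$ is. By the assumed convergence, eventually $d(\alpha_n, \alpha) < 1/D$. For any such $n$, no hyperplane $\hat h$ with $d(\hat h, b) \le D$ can separate $\alpha_n$ from $\alpha$, since such an $\hat h$ would force a term of size at least $1/D$ into the supremum defining $d(\alpha_n, \alpha)$. In particular none of the $\hat h_i$ separates $\alpha_n$ from $\alpha$, so by the choice condition $\alpha_n$ selects the same halfspace $h_i$ as $\alpha$ does, and hence $H \subset \alpha_n$.

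For the converse, fix $\varepsilon > 0$, set $R = \lceil 1/\varepsilon \rceil$, and consider $\mathcal{H}_R = \{\hat h : d(\hat h, b) \le R\}$. The crucial point — and the one place local finiteness is used — is that $\mathcal{H}_R$ is finite: every such hyperplane is dual to at least one edge whose endpoints lie within combinatorial distance $R$ of $b$, that $R$-ball is finite by local finiteness, and each edge is dual to a unique hyperplane, so $|\mathcal{H}_R|$ is bounded by the number of edges in the ball. Now define the finite set $H = \{h \in \alpha : \hat h \in \mathcal{H}_R\} \subset \alpha$. By hypothesis there exists $N$ with $H \subset \alpha_n$ for all $n \ge N$, meaning $\alpha_n$ and $\alpha$ agree on the halfspace choice at every $\hat h \in \mathcal{H}_R$. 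Any hyperplane separating $\alpha_n$ from $\alpha$ must then lie outside $\mathcal{H}_R$, i.e.\ have $d(\hat h, b) \ge R+1$, yielding $d(\alpha_n, \alpha) \le 1/(R+1) < \varepsilon$.

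I do not expect any serious obstacle here. The only technical point worth flagging is the finiteness of $\mathcal{H}_R$, which is exactly where local finiteness enters; the rest is a direct unpacking of the supremum metric together with the ultrafilter choice condition, which ensures that two ultrafilters disagreeing on the choice of halfspace for $\hat h$ are separated by $\hat h$ in the sense required by the distance formula.
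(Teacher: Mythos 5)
Your proof is correct. The paper does not supply a proof for this lemma, stating only that it follows from a straightforward application of the definitions, and your argument is precisely that direct unpacking of the supremum metric and the choice condition, with local finiteness used exactly where it must be, namely to guarantee that $\mathcal{H}_R$ is finite.
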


The next lemma records how a given halfspace of an ultrafilter must interact with an infinite chain of halfspaces in this ultrafilter.

\begin{lemma} \label{lemma_intersects_inf_many}
	Let $X$ be a CAT(0) cube complex. Let $h$ be a halfspace in the ultrafilter $\alpha \in \partial X$, and let $l_1 \supset l_2 \supset l_3 \dots$ be an infinite sequence of nested halfspaces in $\alpha$. Then there exists an $N$ such that either $\hat{h} \cap \hat{l}_n \neq \emptyset$ for all $n \ge N$ or $l_n \subset h$ for all $n \ge N$. In particular, if $h$ is minimal in $\alpha$, then $\hat{h} \cap \hat{l}_n \neq \emptyset$ for all $n > N$.
\end{lemma}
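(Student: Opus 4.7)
The plan is to classify, for each index $n$, the relationship between the halfspaces $h$ and $l_n$. Since both lie in the ultrafilter $\alpha$, the consistency condition rules out $h \subseteq l_n^*$ and $l_n \subseteq h^*$, so exactly one of the following holds for each $n$: (a) $\hat{h}$ crosses $\hat{l}_n$; (b) $h \subseteq l_n$ and they do not cross; (c) $l_n \subseteq h$ and they do not cross; or (d) $h \cup l_n = X$, equivalently $l_n^* \subseteq h$, and they do not cross. Let $B$, $C$, $D \subseteq \mathbb{N}$ denote the sets of indices for which (b), (c), (d) respectively hold.

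Next I would observe how these sets behave under the nesting $l_1 \supsetneq l_2 \supsetneq \cdots$: since $l_m \supseteq l_n$ whenever $m \le n$, the containments $h \subseteq l_n$ and $l_n^* \subseteq h$ both transfer to every smaller index, so $B$ and $D$ are downward closed in $\mathbb{N}$; the symmetric observation shows $C$ is upward closed. If $C$ is nonempty, taking $N = \min C$ immediately delivers the second branch $l_n \subseteq h$ of the conclusion, so the remaining task is to handle the case $C = \emptyset$.

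The main step, and the crux of the argument, will be to rule out $B = \mathbb{N}$ and $D = \mathbb{N}$. For $B$, I would fix a vertex $v \in h$; if $B = \mathbb{N}$ then $v \in l_n$ for every $n$, placing the infinite strictly descending chain $l_1 \supsetneq l_2 \supsetneq \cdots$ inside the vertex ultrafilter $\alpha_v$ and contradicting the fact that $\alpha_v$ satisfies DCC. An identical argument, using a vertex of $h^*$ (which exists because the positive carrier $\mathcal{C}^+(h^*) \subseteq h^*$ is a nonempty subcomplex), rules out $D = \mathbb{N}$. The slightly delicate point here is ensuring $h^*$ really contains a vertex and is not witnessed only by interior cube points, but this is standard. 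With $B$ and $D$ both finite, any $N$ exceeding all their elements forces $n \in A$ for every $n \ge N$, giving the first branch.

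Finally, the ``in particular'' clause falls out cleanly: if $h$ is minimal in $\alpha$ then $l_n \subsetneq h$ is forbidden, so case (c) can occur only when $l_n = h$, which happens for at most one index since the chain is strictly descending. Hence $C$ is finite, we are in the previous case, and $\hat{h} \cap \hat{l}_n \neq \emptyset$ for all sufficiently large $n$.
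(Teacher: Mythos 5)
Your proof is correct but takes a genuinely different route from the paper's. The paper reduces immediately to the case where $l_n \not\subset h$ for all $n$, then selects an index $N$ for which $\hat{l}_N$ is non-crossing and $\mathcal{C}(\hat{l}_N)$ realizes the minimum distance to $\mathcal{C}(\hat{h})$ among non-crossing $\hat{l}_i$; a short separation argument then shows no $\hat{l}_n$ with $n>N$ can again be non-crossing, since a nested $\hat{l}_n$ would lie strictly between $\hat{h}$ and $\hat{l}_N$ and violate minimality. You instead partition indices by the four possible relative positions of $h$ and $l_n$, observe the monotonicity of each class under the strict nesting of the $l_n$, and kill the two ``bad'' classes $B$ and $D$ by noting that either would place the entire strictly descending chain in a vertex ultrafilter $\alpha_v$, contradicting DCC. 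This replaces the paper's geometric ``nearest non-crossing hyperplane'' argument with a purely order-theoretic one, and avoids any reference to carrier distances or convexity. Two very small points worth tightening: since $l_n = h$ can occur for (at most) one index, the four cases are not quite mutually exclusive, though this does not affect the argument; and in the final clause you should invoke upward-closure of $C$ to pass from ``$C$ has at most one element'' to ``$C=\emptyset$'' before concluding you are in the first branch.
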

\begin{proof}
	If $l_N \subset h$ for some $N$, then $l_n \subset h$ for all $n \ge N$ and we are done. Thus we assume that for all $i \ge 1$, $h \subset l_i$ or $\hat{l}_i$ intersects $\hat{h}$. 
	
	Choose $N$ such that $\hat{l}_N$ does not intersect $\hat{h}$ and such that $d(\mathcal{C}(\hat{l}_N), \mathcal{C}(\hat{h})) \le d(\mathcal{C}(\hat{l}_i), \mathcal{C}(\hat{h}))$ for all $i$ where $\hat{l}_i \cap \hat{h} = \emptyset$. If no such $N$ exists, then $\hat{h}$ intersects $\hat{l}_i$ for all $i \ge 1$, and we are done.
	
	Let $n > N$. If $\hat{h} \subset l_n$, then $\mathcal{C}( \hat{h} )$ is strictly closer to $\mathcal{C}( \hat{l}_n )$ than to $\mathcal{C} (\hat{l}_N)$, contradicting our choice of $N$. On the other hand, it cannot be that $\hat{h} \subset l_n^*$, for $l_n \not\subset h$ by assumption, and $h \not\subset l_n^*$ as $\alpha$ satisfies the consistency condition. Hence, $\hat{l}_n$ must intersect $\hat{h}$ for all $n > N$.
\end{proof}

\section{Caged hyperplanes} \label{sec_caged_hyps}
We introduce the notion of caged hyperplanes, defined below, to give an equivalent condition for when $\partial X = B(X)$. We begin by defining a cage.

\begin{definition}[Cage] \label{def_small_sets}
	Let $\alpha$ be an ultrafilter on a CAT(0) cube complex $X$. A subset of halfspaces $K \subset \alpha$ is a \textit{cage in $\alpha$} if $\bigcap_{k \in K} \mathcal{C}^+(k) \neq \emptyset$.
\end{definition}

\begin{figure}[htp]
	\centering
	\begin{overpic}[scale=.7]{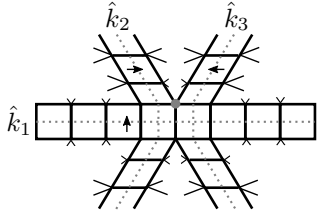}
		\put(-10,28){$\hat{k}_1$}	
		\put(25,65){$\hat{k}_2$}
		\put(67,65){$\hat{k}_3$}	
	\end{overpic}
	\caption{A cage, $K = \{k_1, k_2, k_3\}$. The dotted lines represent the hyperplanes $\hat{k}_1$,  $\hat{k}_2$ and $\hat{k}_3$. The arrows indicate the choice of halfspace for the corresponding hyperplane. The gray circle is a vertex in $\bigcap_{k \in K} \mathcal{C}^+(k)$.} \label{fig_cage}
\end{figure}

Note that any two halfspaces in a common cage are incomparable. For given halfspaces $h$ and $k$ such that $h \subset k$, then $\mathcal{C}^+(h) \cap \mathcal{C}^+(k) = \emptyset$.

Furthermore, we remark that a cage, $K$, in an ultrafilter on a locally finite CAT(0) cube complex, must contain finitely many halfspaces. This follows since given any vertex, $v \in \bigcap_{k \in K} \mathcal{C}^+(k)$, every hyperplane in $\hat{K}$ is dual to an edge adjacent to $v$.

We now define the property of having caged hyperplanes.
\begin{definition}[Caged Hyperplanes] \label{def_caged_hyps}
	A CAT(0) cube complex $X$ has \textit{caged hyperplanes} if given any ultrafilter $\alpha \in \partial X$ and a cage $K \subset \alpha$, there exists a hyperplane $\hat{h} \subset \bigcap_{k \in K}k$.  
\end{definition}

\begin{definition} \label{def_subset_containment}
Let $H$ and $H'$ each be a set of halfspaces. We say $H' \prec H$ if given any $h' \in H'$, there exists a halfspace $h \in H$ such that either $h' \subset h$ or $h' = h$. 
\end{definition}

By definition if $H' \prec H$ and $H'' \prec H'$, then $H'' \prec H$. We now show that given a finite subset of halfspaces, $H$, there exists a cage $K$ such that $K \prec H$. 
 
\begin{lemma} \label{lemma_cage_subset}
	Let $\alpha$ be an ultrafilter on a CAT(0) cube complex, and let $H \subset \alpha$ be a finite subset of halfspaces. Then there exists a cage $K \subset \alpha$ such that $K \prec H$.
\end{lemma}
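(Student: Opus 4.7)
The plan is to proceed by induction on $|H|$, using Helly's property as the main tool. The base case $|H|=0$ is handled by the vacuous cage $K=\emptyset$, which tautologically satisfies $\bigcap_{k\in K}\mathcal{C}^+(k)=X\neq\emptyset$ and $K\prec H$. For $|H|=1$, say $H=\{h\}$, observe that $\mathcal{C}^+(h)$ is always nonempty (being isometric to $\hat h$), so $K=\{h\}$ is itself a cage with $K\prec H$.

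For the inductive step, assume the result for sets of smaller cardinality and let $H=\{h_1,\dots,h_n\}$. The halfspaces in $H$ pairwise intersect since they all lie in the ultrafilter $\alpha$ (for if $h_i\subset h_j^*$, then $h_j^*\in\alpha$ by consistency, contradicting $h_j\in\alpha$). Helly's property then gives a vertex $v\in\bigcap_{i} h_i$. The idea is to produce, for each $i$, a halfspace $k_i\in\alpha$ with either $k_i=h_i$ or $k_i\subsetneq h_i$, such that $v\in\mathcal{C}^+(k_i)$. Setting $K=\{k_1,\dots,k_n\}$ then gives a cage witnessed by $v$ with $K\prec H$.

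For each index $i$, if $v\in\mathcal{C}^+(h_i)$ already, take $k_i=h_i$. Otherwise, $v\in h_i$ but $v$ is not adjacent to $\hat h_i$, so $v$ is separated from the convex subcomplex $\mathcal{C}^+(h_i)$ by at least one hyperplane. The key geometric observation is that any such separating hyperplane $\hat k$ must satisfy $\hat k\cap\hat h_i=\emptyset$: if $\hat k$ crossed $\hat h_i$, then $\hat k\cap\mathcal{C}^+(h_i)$ would be a hyperplane in the cube complex $\mathcal{C}^+(h_i)$, so $\mathcal{C}^+(h_i)$ would meet both halfspaces of $\hat k$ and could not lie on the side opposite to $v$. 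Thus $\hat k\subset h_i$, and of the two halfspaces of $\hat k$, the one containing $v$ is entirely contained in $h_i$. Picking such a $\hat k$ dual to an edge at $v$ (possible by applying this to the first edge of a geodesic from $v$ to $\mathcal{C}^+(h_i)$ in $h_i$) gives a candidate halfspace with $v\in\mathcal{C}^+(k)$ and $k\subset h_i$.

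The delicate step, and the main obstacle, is arranging for $k\in\alpha$ rather than merely $k\in\alpha_v$; a priori $\alpha$ could orient $\hat k$ oppositely from $\alpha_v$. I would resolve this by refining the choice of $v$ itself: instead of an arbitrary vertex in $\bigcap_i h_i$, take $v$ to minimize $\sum_i d(v,\hat h_i)$ over $\bigcap_i h_i\cap\bigcap_{j<i} k_j$, iteratively as $i$ increases, and use that any edge at $v$ which moves toward some $\hat h_i$ either keeps $v$ inside all previously chosen halfspaces or exits one of them in a way that contradicts consistency of $\alpha$. Thus $\alpha$ must orient each $\hat k$ on the side containing $v$, placing the desired $k$ in $\alpha$. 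This completes the inductive step and hence the lemma.
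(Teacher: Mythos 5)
Your geometric setup is sound: if $v\in h_i\setminus\mathcal{C}^+(h_i)$, then the hyperplane $\hat{k}_i$ dual to the first edge of a shortest path from $v$ to $\mathcal{C}^+(h_i)$ is disjoint from $\hat{h}_i$, and the halfspace $k_i$ of $\hat{k}_i$ containing $v$ satisfies $k_i\subset h_i$ and $v\in\mathcal{C}^+(k_i)$. You also correctly identify the real obstruction: nothing forces $k_i\in\alpha$, since the choice was made relative to $v$, not relative to $\alpha$. The problem is that your proposed fix does not close this gap. Minimizing $\sum_i d(v,\hat{h}_i)$ is a criterion that depends only on the hyperplanes $\hat{h}_1,\dots,\hat{h}_n$; it is blind to which side of each $\hat{k}_i$ the ultrafilter $\alpha$ lies on, which is exactly what needs to be controlled. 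Concretely, take $X$ an infinite binary tree, $\alpha$ the end $000\cdots$, $h_1$ the halfspace of the edge between $01$ and $010$ containing $01$, and $h_2$ the halfspace of the edge between $\epsilon$ and $0$ containing $0$. Then $h_1,h_2\in\alpha$, they are incomparable, $\mathcal{C}^+(h_1)=\{01\}$, $\mathcal{C}^+(h_2)=\{0\}$, and both $v=0$ and $v=01$ minimize $\sum_i d(v,\hat{h}_i)$ (each gives sum $1$), yet $v=01$ fails: the only candidate $\hat{k}_2$ is the edge between $0$ and $01$, and the halfspace of that hyperplane containing $01$ is not in $\alpha$. So the minimization cannot select the correct base vertex, and the subsequent ``edge at $v$ that moves toward $\hat{h}_i$'' argument does not rescue it. (Separately, you announce induction on $|H|$ but the inductive hypothesis is never used; the step you describe is a direct argument.)

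The paper avoids this issue entirely by never committing to a vertex in advance. It works purely at the level of halfspaces: after discarding comparable pairs, it repeatedly picks a pair $h,h'\in H$ with disjoint positive carriers, takes a shortest path between the carriers, lets $\hat{k}$ be a dual hyperplane of that path, and then defines $k$ to be \emph{the halfspace of $\hat{k}$ lying in $\alpha$} (using the choice condition of ultrafilters), after which one checks $k\subset h$ or $k\subset h'$. Membership in $\alpha$ is thus built in by construction rather than something to be verified afterward. Termination comes from a monotonicity argument (the sum of carrier distances strictly decreases), and the witnessing vertex appears only at the very end via Helly. If you want to salvage a vertex-first argument, you would need to choose $v$ using $\alpha$ itself (for instance, via a gate/median projection of $\alpha$ onto $\bigcap_i h_i$), not via a criterion that sees only the $\hat{h}_i$'s.
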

\begin{proof}
	If there exist halfspaces $h, k \in H$ such that $k \subset h$, then set $H' = H \setminus h$. Clearly, we have that $H' \prec H$. By iteratively making such replacements, we obtain a subset of $H$ which does not contain a pair of comparable halfspaces. Thus, we may assume that $H$ does not contain comparable halfspaces. 
	
	We now construct the set of halfspaces, $K$, by making a series of replacements to $H$. Let $h, h' \in H$ be halfspaces such that $\mathcal{C}^+(h) \cap \mathcal{C}^+(h') = \emptyset$. Let $\gamma$ be a geodesic from $\mathcal{C}^+(h)$ to $\mathcal{C}^+(h')$. Let $\hat{k}$ be a hyperplane dual to an edge of $\gamma$. By Lemma \ref{lemma_convexity2}, the hyperplane $\hat{k}$ does not intersect $\mathcal{C}^+(h)$ and does not intersect $\mathcal{C}^+(h')$. Let $k$ be the choice of halfspace in $\alpha$ for $\hat{k}$. As $h$ and $h'$ are not comparable, either $k \subset h$ or $k \subset h'$. Without loss of generality, assume that $k \subset h$. We then form a new set of halfspaces $H_1$ by replacing $h$ by $k$. It follows that $H_1 \prec H$.
	
	We iteratively make such a replacements, when still possible, to obtain the sequence of subsets:
	\[H = H_0  \succ H_1 \succ H_2 \succ ... \succ H_n \]
	with $H_i \in \alpha$ for all $0 \le i \le n$.  Furthermore, as these replacements strictly decrease the sum of the distances between the carriers of hyperplanes in $\hat{H}$, this sequence is indeed finite ($n < \infty)$. Set $K = H_n$. Given $k, k' \in K$, we must have that $\mathcal{C}^+(h) \cap \mathcal{C}^+(h') \neq \emptyset$, or else another replacement could have been made. It follows that $\bigcap_{k \in K} \mathcal{C}^+k$ is nonempty by Helly's property. Thus, $K$ is a cage. 
\end{proof}

Given an ultrafilter on the Roller boundary, we show there are arbitrarily small neighborhoods, $\mathcal{U}_K$, containing this ultrafilter where $K$ a cage. Recall that $\mathcal{U}_K$ is the open set consisting of ultrafilters on $X$ that contain every halfspace in $K$.

\begin{proposition} \label{lemma_contains_cage_nbhd}
	Let $X$ be a locally finite CAT(0) cube complex. Given any $\alpha \in \partial X$ and open set $V \subset \mathcal{U}$ containing $\alpha$, there exists a cage, $K \subset \alpha$, such that $\alpha \in \mathcal{U}_K \subset V$.
\end{proposition}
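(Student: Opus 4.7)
The plan is to reduce this to Lemma \ref{lemma_cage_subset} by first replacing the arbitrary open set $V$ with a basic open neighborhood of $\alpha$ contained in it. Recall that the sets $\mathcal{U}_H$, where $H$ ranges over finite collections of halfspaces, form a basis for the topology on $\mathcal{U}$. Thus since $V$ is open and contains $\alpha$, there exists a finite subset of halfspaces $H$ such that $\alpha \in \mathcal{U}_H \subset V$. In particular, every halfspace in $H$ lies in $\alpha$, so $H \subset \alpha$.

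Next, I apply Lemma \ref{lemma_cage_subset} to the finite set $H \subset \alpha$ to obtain a cage $K \subset \alpha$ with $K \prec H$. Since $K \subset \alpha$, we automatically have $\alpha \in \mathcal{U}_K$. It remains to verify the containment $\mathcal{U}_K \subset \mathcal{U}_H$, which together with $\mathcal{U}_H \subset V$ completes the proof.

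To check $\mathcal{U}_K \subset \mathcal{U}_H$, let $\beta \in \mathcal{U}_K$, so that $K \subset \beta$. Fix any $h \in H$. By the definition of $\prec$, there exists $k \in K$ with either $k = h$ or $k \subset h$. In the first case $h = k \in \beta$ directly, and in the second case the consistency condition on the ultrafilter $\beta$ forces $h \in \beta$ (since $k \in \beta$ and $k \subset h$). Either way $h \in \beta$, and since $h \in H$ was arbitrary we conclude $H \subset \beta$, i.e., $\beta \in \mathcal{U}_H$.

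There is essentially no main obstacle here: the proposition is a straightforward combination of the definition of the basis for the topology on $\mathcal{U}$ with Lemma \ref{lemma_cage_subset}. The only point requiring mild care is verifying that the relation $\prec$ interacts correctly with the consistency condition to give the set-theoretic containment $\mathcal{U}_K \subset \mathcal{U}_H$, which is immediate from the definitions.
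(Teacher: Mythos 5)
Your proof is correct and follows essentially the same approach as the paper's: both reduce to Lemma \ref{lemma_cage_subset} by first locating a basic open neighborhood $\mathcal{U}_H$ of $\alpha$ inside $V$, then using $K \prec H$ together with the consistency condition to conclude $\mathcal{U}_K \subset \mathcal{U}_H$. The only stylistic difference is that you invoke the stated fact that the $\mathcal{U}_H$ form a basis, whereas the paper unpacks the metric directly to produce the finite set $F(R)$, which amounts to re-deriving that basis fact in the specific instance.
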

\begin{proof}
	Choose $\epsilon > 0$ small enough so that the $\epsilon$--ball about $\alpha$ is contained in $V$.	Let $b \in X$ be the base vertex. Let $\hat{F}(R)$ be the set of hyperplanes in $X$ distance at most $R$ from $b$, and let $F(R)$ be the corresponding choices of halfspaces in $\alpha$ for the hyperplanes in $\hat{F}(R)$. As $X$ is locally finite, $|F(R)|$ is finite. By the definition of the metric on the ultrafilters $\mathcal{U}(X)$, there exists an $R > 0$, such that any ultrafilter containing $F(R)$ is $\epsilon$ close to $\alpha$. Fix such an $R$ and corresponding sets $F = F(R)$, $\hat{F} = \hat{F}(R)$.
	
	By Lemma \ref{lemma_cage_subset}, there exists a cage $K \subset \alpha$ such that $K \prec F$. An ultrafilter in $\mathcal{U}_K$ must contain every halfspace in $F$ by the consistency condition. Thus, every ultrafilter in $\mathcal{U}_K$ is distance at most $\epsilon$ from $\alpha$. Finally, $\mathcal{U}_K$ contains $\alpha$ as $K \subset \alpha$. This proves the claim.
\end{proof}

The next two propositions give some of the implications in Theorem \ref{thm_char}.
\begin{proposition} \label{prop_caged_equiv}
	Let $X$ be a locally finite CAT(0) cube complex. $X$ has caged hyperplanes if and only if every open set $V \subset \mathcal{U}$, satisfying $V \cap \partial X \neq \emptyset$, contains a hyperplane.
\end{proposition}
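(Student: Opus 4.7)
The plan is to pivot on Proposition \ref{lemma_contains_cage_nbhd}, which shows that the sets $\mathcal{U}_K$ for cages $K \subset \alpha$ form a neighborhood basis at each $\alpha \in \partial X$. Both directions then reduce to a single geometric observation: if $\hat{h} \neq \hat{k}$ and $\hat{h} \subset k$ as subsets of $X$, then $\mathcal{C}(\hat{h}) \subset k$. The reason is that $\hat{h} \subset k$ prevents $\hat{h}$ and $\hat{k}$ from crossing (crossing would force $\hat{h}$ to have points in $k^*$), so $\hat{k}$ cannot appear as a midcube of any cube in $\mathcal{C}(\hat{h})$; hence $\mathcal{C}(\hat{h})$ lies on one side of $\hat{k}$, necessarily the $k$-side since it contains $\hat{h}$.

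For the forward direction, given an open $V$ with some $\alpha \in V \cap \partial X$, Proposition \ref{lemma_contains_cage_nbhd} produces a cage $K \subset \alpha$ with $\mathcal{U}_K \subset V$, and the caged hyperplanes hypothesis yields a hyperplane $\hat{h} \subset \bigcap_{k \in K} k$. Applying the observation to each $k \in K$ gives $\mathcal{C}(\hat{h}) \subset \bigcap_{k \in K} k$, so both endpoints of every edge dual to $\hat{h}$ lie in $\mathcal{U}_K \subset V$, which is exactly what the definition of $\hat{h} \subset V$ requires.

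For the reverse direction, given $\alpha \in \partial X$ and a cage $K \subset \alpha$, the open set $\mathcal{U}_K$ contains $\alpha \in \partial X$, so by hypothesis $\mathcal{U}_K$ contains some hyperplane $\hat{h}$. For each $k \in K$ and each edge $e$ dual to $\hat{h}$, both endpoints of $e$ lie in $\mathcal{U}_K$, hence in $k$. If $\hat{k}$ were to cut some cube $C$ of $\mathcal{C}(\hat{h})$, then the midcube $\hat{h} \cap C$ would itself be split by $\hat{k}$, and some vertex of this midcube — which is the midpoint of an edge of $C$ dual to $\hat{h}$ — would sit strictly on the $k^*$ side, forcing that edge's endpoints into $k^*$ and contradicting the previous sentence. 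Thus $\hat{k}$ does not meet $\mathcal{C}(\hat{h})$, so $\mathcal{C}(\hat{h}) \subset k$, and in particular $\hat{h} \subset k$. Since $k \in K$ was arbitrary, $\hat{h} \subset \bigcap_{k \in K} k$, as required. I expect the only real subtlety to be keeping straight the two meanings of ``$\hat{h}$ is contained in something'' — set-theoretic containment in a halfspace of $X$ versus containment in an open subset of $\mathcal{U}$ — and in both directions the observation from the first paragraph is the bridge between them.
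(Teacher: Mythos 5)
Your proof is correct and follows the same route as the paper's, reducing both directions to the cage neighborhood basis supplied by Proposition~\ref{lemma_contains_cage_nbhd}. The only difference is that you make explicit the carrier-containment observation bridging the set-theoretic containment $\hat{h} \subset \bigcap_{k\in K} k$ in $X$ with the topological containment $\hat{h} \subset \mathcal{U}_K$ in $\mathcal{U}$, a step the paper's proof leaves implicit.
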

\begin{proof}
	Suppose first that $X$ has caged hyperplanes. Let $V$ be an open set containing an element $\alpha \in \partial X$. By Lemma \ref{lemma_contains_cage_nbhd}, there is a cage $K$ such that $\alpha \in \mathcal{U}_K \subset V$. As $X$ has caged hyperplanes, $\mathcal{U}_K$ contains a hyperplane. Thus, $V$ contains this hyperplane as well.
	
	Suppose, for the other direction, every open set contains a hyperplane. In particular for every cage, $K$, the open set, $\mathcal{U}_K$, contains a hyperplane. Thus, $X$ has caged hyperplanes.
\end{proof}

\begin{proposition} \label{prop_caged_implies_bx}
	Let $X$ be a cocompact, essential, locally finite CAT(0) cube complex. If $X$ has caged hyperplanes, then $\partial X = B(X)$
\end{proposition}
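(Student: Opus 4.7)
The plan is to show that every open neighborhood $V$ of $\alpha \in \partial X$ contains a non-terminating ultrafilter, from which $\alpha \in \overline{\mathcal{U}_{NT}} = B(X)$ follows. By Lemma \ref{lemma_contains_cage_nbhd}, I may reduce to the case $V = \mathcal{U}_K$ for some cage $K \subset \alpha$, and it then suffices to construct a non-terminating $\beta \in \mathcal{U}_K$.

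The first step will be to show that $\bigcap_{k \in K} k$ is unbounded by iterating the caged hyperplanes property. Setting $K_0 = K$, at each step the hypothesis produces a hyperplane $\hat{h}_{n+1}$ with $\mathcal{C}(\hat{h}_{n+1}) \subset \bigcap K_n$. Choosing the halfspace $h_{n+1}$ of $\hat{h}_{n+1}$ lying in $\alpha$ and applying Lemma \ref{lemma_cage_subset} to $K_n \cup \{h_{n+1}\}$ yields a new cage $K_{n+1} \subset \alpha$. Because the carrier of $\hat{h}_{n+1}$ has vertices on both sides of $\hat{h}_{n+1}$ inside $\bigcap K_n$, the halfspace $h_{n+1}$ cannot contain $\bigcap K_n$, so $\bigcap K_{n+1} \subset \bigcap K_n \cap h_{n+1} \subsetneq \bigcap K_n$ strictly. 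A bounded (hence finite) $\bigcap K$ cannot sustain such an infinite strictly decreasing chain of subcomplexes, forcing $\bigcap K$ to be unbounded.

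Next, applying K\"onig's lemma to the tree of finite geodesic segments in $\bigcap K$ emanating from a chosen base vertex — locally finite by local finiteness of $X$ and infinite by unboundedness of $\bigcap K$ — produces an infinite geodesic ray $\gamma = (v_0, v_1, \ldots)$ contained in $\bigcap K$. The vertex ultrafilters $\alpha_{v_n}$ each contain $K$ and hence lie in the clopen set $\mathcal{U}_K$, so by compactness of $\mathcal{U}$ they converge to some $\beta \in \mathcal{U}_K \cap \partial X$. The hyperplanes $\hat{l}_n$ dual to the edges of $\gamma$, with halfspaces $l_n$ containing the tail, form a strictly nested chain $l_1 \supsetneq l_2 \supsetneq \ldots$ in $\beta$.

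Finally, I will argue $\beta$ is non-terminating. Given $h \in \beta$, Lemma \ref{lemma_finite_subsets_in_limit} ensures $v_n \in h$ for all large $n$, and Lemma \ref{lemma_intersects_inf_many} applied to the chain $(l_n)$ and the halfspace $h$ splits into two cases: either $\hat{h} \cap \hat{l}_n \neq \emptyset$ for all large $n$, or $l_n \subsetneq h$ for all large $n$, the latter case providing the desired smaller halfspaces of $h$ in $\beta$. The main technical obstacle is ruling out the first case for every $h \in \beta$. I expect this requires extracting the ray $\gamma$ not generically but in concert with the cage iteration of Step 1, so that the crossed hyperplanes $\hat{l}_n$ lie sufficiently deep in the successive intersections $\bigcap K_n$ to prevent transversal crossings with arbitrary $\hat{h} \in \beta$; once established, we obtain $\beta \in \mathcal{U}_K \cap \mathcal{U}_{NT} \subset V$, finishing the proof.
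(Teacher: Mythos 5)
Your outline takes a fundamentally different and more ambitious route than the paper, and it contains a genuine gap that you yourself flag at the end: you never actually establish that the limit ultrafilter $\beta$ is non-terminating. This is not a minor technical detail to be patched; it is the entire crux. A geodesic ray in $\bigcap K$ need not converge to a non-terminating ultrafilter, because the minimal halfspaces of the limit can correspond to hyperplanes \emph{transverse} to the ray rather than ones it crosses. Indeed, this is precisely the situation Lemma \ref{lemma_intersects_inf_many} cannot rule out, and your concluding sentence (``I expect this requires\dots'') concedes that you have no argument to exclude it. Tellingly, nothing in your sketch uses the \emph{cocompactness} hypothesis, which is indispensable here: without it, non-terminating ultrafilters need not exist at all. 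What you are effectively trying to reprove from scratch is a version of \cite[Theorem 3.1]{NS}, whose proof uses the cocompact group action in an essential way to repeatedly find deep halfspaces; a ray extracted by K\"onig's lemma gives no such control.

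The paper's actual proof is much shorter and delegates all of this difficulty to the cited result. From the caged hyperplanes property one obtains a hyperplane $\hat{h}$ inside $\bigcap_{k \in K} k$ with a halfspace $h$ satisfying $h \subset k$ for all $k \in K$. Then \cite[Theorem 3.1]{NS} together with the remark following it produces a non-terminating ultrafilter $\beta$ containing $h$; the consistency condition forces $K \subset \beta$, so $\beta \in \mathcal{U}_K \subset V$. Your Step 1 (iterating the caged hyperplanes property to show $\bigcap K$ is unbounded) is therefore also an unnecessary detour: the paper needs only a single application of the property, not an infinite iteration. The iteration argument itself is essentially sound, but it serves no purpose once one recognizes that the existence of the ray is not what is missing --- what is missing is the control that makes its limit non-terminating, and that control comes from cocompactness via \cite{NS}, not from the combinatorics of the ray.
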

\begin{proof}
	Let $\alpha \in \partial X$ and $V \subset \mathcal{U}$ a neighborhood of $\alpha$. By Proposition \ref{lemma_contains_cage_nbhd}, there exists a cage $K$ such that the corresponding open set, $\mathcal{U}_K$, satisfies $\alpha \in \mathcal{U}_K \subset V$. By assumption, there is a hyperplane $\hat{h} \subset \mathcal{U}_K$. Let $h$ be the choice of halfspace for $\hat{h}$ such that $\alpha \in \mathcal{U}_h$. In particular, $h \subset k$ for every $k \in K$.
	
	By \cite[Theorem 3.1]{NS} and Remark 3.2 after, there exists a non-terminating ultrafilter $\beta$ that contains $h$. By the consistency condition, $k \in \beta$ for every $k \in K$. Thus, $\beta \in \mathcal{U}_K$. Hence, there exist non-terminating ultrafilters arbitrarily close to $\alpha$. This shows $\alpha$ is in the closure of $\mathcal{U}_{NT}$. Thus, $\partial X = B(X)$.
\end{proof}

\section{Straight Links} \label{sec_straight_links}

We define when a CAT(0) cube complex has straight links. Every CAT(0) cube complex with extendable CAT(0) geodesics has straight links. This follows from \cite[Proposition II.5.10]{BH}. It is straightforward to find examples where the converse is not true. Thus, the straight links assumption is a natural one to make. Throughout this section we prove basic results regarding CAT(0) cube complexes with straight links.

\begin{definition}[Straight links] \label{def_ecg}
	A CAT(0) cube complex $X$ has \textit{straight links}, if given any vertex $v \in X$ and any edge $e$ adjacent to $v$, there exists an edge $f$ adjacent to $v$ such that the hyperplane dual to $e$ does not intersect the hyperplane dual to $f$.
\end{definition}

\begin{lemma} \label{lemma_straight_links_implies_essential}
	Every CAT(0) cube complex with straight links is essential.
\end{lemma}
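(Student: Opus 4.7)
The plan is to prove by induction on $n \ge 1$ that every halfspace $h$ of $X$ contains a vertex $v_n$ with $d(v_n, \hat h) = n$; this is exactly the essentiality condition. The base case is immediate: pick any $v_1 \in \mathcal{C}^+(h)$.

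For the inductive step, suppose $v_n \in h$ satisfies $d(v_n, \hat h) = n$. I select an edge $e$ at $v_n$ whose dual hyperplane $\hat h_n$ either equals $\hat h$ (when $n = 1$) or separates $v_n$ from $\hat h$ (when $n \ge 2$, taking $e$ as the first edge of a length-$(n-1)$ geodesic from $v_n$ to $\mathcal{C}^+(h)$; in this case $\hat h_n$ cannot cross $\hat h$, else $\hat h$ would have points on both sides of $\hat h_n$ and not be separated from $v_n$). Applying the straight links property at $v_n$ to $e$ yields an edge $f$ at $v_n$ with dual hyperplane $\hat k$ satisfying $\hat k \cap \hat h_n = \emptyset$; I set $v_{n+1}$ to be the other endpoint of $f$.

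Two claims remain: $v_{n+1} \in h$ and $d(v_{n+1}, \hat h) = n + 1$. The first follows because $\hat k \neq \hat h$ (for $n = 1$ from $\hat k \cap \hat h = \emptyset$; for $n \ge 2$ because $v_n \notin \mathcal{C}(\hat h)$), so $f$ does not cross $\hat h$ and both endpoints lie in $h$. The second, and the main obstacle of the proof, rests on the following key lemma: if $\hat h_1, \hat h_2$ are disjoint hyperplanes both separating a vertex $v$ from $\hat h$, they cannot both have $v$ on their carriers. Indeed, if $h_1 \subset h_2$ so that $\hat h_1$ is the deeper of the two, the edge at $v$ dual to $\hat h_1$ lands in $\mathcal{C}^+(h_1) \subset h_2$; but this edge does not cross $\hat h_2$, so its endpoint stays on the same side of $\hat h_2$ as $v$, namely in $h_2^*$, forcing the endpoint to lie in the empty set $h_2 \cap h_2^*$. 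Applying this lemma with $\hat h_n$ in place of $\hat h_1$ (for $n \ge 2$; trivially for $n = 1$, when there are no preexisting separating hyperplanes) rules out $\hat k$ being a separating hyperplane for $v_n$. Combined with $\hat k \cap \hat h = \emptyset$ (which holds since $\hat k \subset h_n^*$ while $\hat h \subset h_n$ when $n \ge 2$, and directly from straight links when $n = 1$), the hyperplane $\hat h$ lies on the $v_n$-side of $\hat k$, opposite to $v_{n+1}$, so $\hat k$ separates $v_{n+1}$ from $\hat h$. The $n - 1$ preexisting separating hyperplanes for $v_n$ continue to separate $v_{n+1}$, since the move $v_n \to v_{n+1}$ only crosses $\hat k$ and $\hat k$ is not among them. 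This yields $n$ separating hyperplanes in total, so $d(v_{n+1}, \hat h) \ge n + 1$, and together with the triangle inequality equality holds, closing the induction.
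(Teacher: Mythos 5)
Your proof is correct and follows the same underlying strategy as the paper: iterate the straight-links condition to walk one edge at a time away from the carrier of $\hat h$, obtaining vertices in $h$ at arbitrarily large distance. The paper's version is terser — it simply asserts that the dual hyperplanes along the resulting edge path are pairwise non-intersecting — whereas you expand exactly this point, using your auxiliary observation about two disjoint hyperplanes through a common vertex separating it from $\hat h$ to show rigorously that the distance to $\hat h$ increases by one at each step.
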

\begin{proof}
	Let $X$ be a CAT(0) cube complex with straight links. Let $\hat{h}$ be a hyperplane in $X$, and let $h$ be a choice of halfspace for $\hat{h}$. Let $e$ be an edge dual to $\hat{h}$, and let $v$ be the endpoint of $e$ contained in $h$. As $X$ has straight links, there exists an infinite sequence of edges $e = e_0, e_1, e_2, \dots$ such that the hyperplanes dual to these edges do not pairwise intersect and such that $e \cap e_1 = v$. Thus for each $i \ge 1$, $e_i \subset h$ and $d(e_i, \mathcal{C}(\hat{h})) = i - 1$. It follows that $X$ is essential.
\end{proof}

A version of the following definition is given in \cite{NS}.
\begin{definition}[Deep] \label{def_deepness}
	Let $h$ be a halfspace in a CAT(0) cube complex $X$ and let $Z$ be a subcomplex of $X$. We say that \textit{$Z$ is deep in $h$} if $Z \cap h$ is not contained in a finite neighborhood of $\hat{h}$. Otherwise, we say $Z$ is \textit{shallow} in $h$.
\end{definition}

The next lemma gives properties of certain cages in a CAT(0) cube complex with straight links.

\begin{lemma}  \label{lemma_straight_link_cages}
	Let $X$ be a CAT(0) cube complex with straight links. Let $K$ be a cage in $X$. Suppose for some $k_0 \in K$, $\hat{k}_0 \cap \hat{k} \neq \emptyset$ for all $k \in K$. Then 
	\begin{enumerate}
		\item \label{lemma_straight_link_cages1} There exists an ultrafilter $\alpha \in \partial X$ such that $K \subset \alpha$.	
		\item \label{lemma_straight_link_cages2} If $\bigcap_{k \in K}k$ does not contain a hyperplane, then $\hat{k}$ is deep in $k_0$ for some $k \in K$.
	\end{enumerate}
\end{lemma}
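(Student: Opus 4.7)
The plan is to construct an infinite combinatorial geodesic ray out of a vertex $v \in \bigcap_{k\in K}\mathcal{C}^+(k)$ that never crosses a hyperplane in $\hat{K}$, and then read off both conclusions from the resulting halfspace structure. Set $v_0 = v$, let $e$ be the edge at $v_0$ dual to $\hat{k}_0$, and apply the straight links hypothesis at $v_0$ to obtain an edge $f_0$ at $v_0$ with $\hat{f}_0 \cap \hat{k}_0 = \emptyset$. Orient $f_0$ so that $v_0 \in f_0^*$, and let $v_1 \in f_0$ be its other endpoint. Iteratively, at each $v_i$ apply straight links to the edge $f_{i-1}$ (now incident to $v_i$) to produce an edge $f_i$ with $\hat{f}_i \cap \hat{f}_{i-1} = \emptyset$, oriented so that $v_{i+1} \in f_i$. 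Because consecutive hyperplanes are disjoint, tracking which side of each $\hat{f}_j$ the vertices $v_i$ lie on yields the strict chain $f_0 \supsetneq f_1 \supsetneq f_2 \supsetneq \cdots$, and the sequence $v_0, v_1, v_2, \dots$ is a combinatorial geodesic ray.

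The key structural observation is that $\hat{f}_i \cap \hat{k}_0 = \emptyset$ for every $i \ge 0$. First, $\hat{k}_0 \subset f_0^*$, since the carrier of $\hat{k}_0$ contains $v_0 \in f_0^*$ and is disjoint from $\hat{f}_0$; by the nesting of halfspaces this gives $\hat{k}_0 \subset f_{i-1}^*$ for every $i \ge 1$. At the same time $\hat{f}_i \subset f_{i-1}$, since $\hat{f}_i$ passes through $v_i \in f_{i-1}$ and is disjoint from $\hat{f}_{i-1}$. Hence $\hat{f}_i$ and $\hat{k}_0$ lie on opposite sides of $\hat{f}_{i-1}$ and cannot meet. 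Combined with the hypothesis $\hat{k}_0 \cap \hat{k} \neq \emptyset$ for every $k \in K$, this forces $\hat{f}_i \neq \hat{k}$ for every $i$ and every $k \in K$, so the ray crosses no hyperplane in $\hat{K}$; in particular, $v_i \in \bigcap_{k\in K} k$ for every $i$.

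Part (1) then follows: any subsequential limit $\alpha$ of the ultrafilters $\alpha_{v_n}$, which exists by compactness of $\mathcal{U}$, contains the infinite descending chain $\{f_i\}$ (hence $\alpha \in \partial X$) and contains $K$ because $K \subset \alpha_{v_n}$ for all $n$. For Part (2) I would argue contrapositively: if some $\hat{f}_i$ crossed no hyperplane in $\hat{K}$, then the carrier $\mathcal{C}(\hat{f}_i)$ would be a connected subcomplex not separated from $\bigcap_{k\in K} k$ by any $\hat{k}$ and meeting $\bigcap_{k\in K} k$ at $v_i$, so $\mathcal{C}(\hat{f}_i) \subset \bigcap_{k\in K} k$, making $\hat{f}_i$ a hyperplane inside $\bigcap_{k\in K} k$. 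Assuming $\bigcap_{k\in K} k$ contains no hyperplane, each $\hat{f}_i$ must therefore cross some $\hat{k}$ with $k \ne k_0$, and pigeonhole on the finite set $K$ yields a single $k_1 \in K \setminus \{k_0\}$ with $\hat{f}_i \cap \hat{k}_1 \ne \emptyset$ for infinitely many $i$. The nesting also shows that $\hat{f}_0, \dots, \hat{f}_{i-1}$ all separate $\mathcal{C}(\hat{f}_i)$ from $\mathcal{C}(\hat{k}_0)$, so $d(\mathcal{C}(\hat{f}_i), \mathcal{C}(\hat{k}_0)) \ge i$; consequently $\hat{k}_1 \cap k_0$ contains points arbitrarily far from $\hat{k}_0$, proving that $\hat{k}_1$ is deep in $k_0$.

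The delicate step is verifying the nested halfspace structure together with the location of $\hat{k}_0$; once that bookkeeping is in place, both conclusions follow from standard CAT(0) cube complex reasoning.
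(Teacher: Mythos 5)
Your proof is correct and follows the same overall strategy as the paper: iterate the straight-links condition from the edge dual to $\hat{k}_0$ to build a geodesic ray $f_0 \supsetneq f_1 \supsetneq \cdots$, show the ray never crosses a hyperplane of $\hat{K}$, and read off both conclusions. You differ from the paper in how two steps are justified. To see that the ray avoids $\hat{K}$, you use the nesting $\hat{f}_i \subset f_{i-1}$ together with $\hat{k}_0 \subset f_{i-1}^*$ to conclude $\hat{f}_i \cap \hat{k}_0 = \emptyset$, so the hypothesis that every $\hat{k}$ meets $\hat{k}_0$ rules out $\hat{f}_i \in \hat{K}$ uniformly in $i$; the paper instead splits off the $i>1$ case and uses convexity of $\mathcal{C}^+(k)$ to force two consecutive dual hyperplanes of the ray to cross. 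Your unified version is a bit cleaner. For part (2), the paper argues directly that if every $\hat{k} \cap k_0$ were shallow, then a halfspace $f_N$ far along the ray would lie inside $\bigcap_{k \in K} k$ and its hyperplane would contradict the hypothesis; your pigeonhole on the finite set $K$, after showing that each $\hat{f}_i$ must cross some $\hat{k}$, is a slightly longer path to the same conclusion but equally sound. One small caveat: you extract $\alpha$ in part (1) via sequential compactness of $\mathcal{U}$, which needs $X$ locally finite so that $\mathcal{U}$ is a compact metric space; local finiteness is not among the lemma's stated hypotheses (the paper instead invokes Roller's extension theorem for consistent families, which avoids this), although it does hold everywhere the lemma is applied in the paper, and one could equally just note that $K \cup \{f_i\}_{i\ge 0}$ is a consistent family and extend it directly.
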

\begin{proof}
	We first prove \ref{lemma_straight_link_cages1}. By the definition of a cage, there exists a vertex $v \in \bigcap_{k \in K} \mathcal{C}^+(k)$. Let $e$ be the edge dual to $\hat{k}_0$ that is adjacent to $v$. By the straight links condition, there is a geodesic ray, $\gamma$, that is the concatenation of edges $e_1, e_2, e_3 \dots$ such that $e \cap e_1 = v$ and such that the hyperplane dual to $e_i$ does not intersect the hyperplane dual to $e_j$ for all $0 \le i < j$. Furthermore, $\hat{k}_0$ does not intersect $\gamma$.
	
	We claim that $\hat{k} \cap \gamma = \emptyset$ for all $k \in K$. For suppose otherwise that $\hat{k} \cap \gamma \neq \emptyset$ for some $\hat{k} \in \hat{K}$. By our choice of $\gamma$, $\hat{k} \neq \hat{k}_0$. Let $e_i$ be the edge of $\gamma$ that is dual to $\hat{k}$. 
	
	If $i = 1$, then $e_1$ and $e$ are adjacent edges dual to intersecting hyperplanes ($\hat{k} \cap \hat{k}_0 \neq \emptyset$ by assumption). This is not possible by our choice of $\gamma$. On the other hand, suppose that $i > 1$. Let $\gamma'$ be the subpath of $\gamma$ from $v$ to $e_i$. By convexity, $\gamma' \subset \mathcal{C}^+(k)$. However, $e_{i-1} \in \mathcal{C}^+(k)$ and $e_i$ is dual to $\hat{k}$. This means that the hyperplane dual to $e_{i-1}$ intersects the hyperplane dual to $e_i$. This again contradicts our choice of $\gamma$. Thus, $\hat{k} \cap \gamma = \emptyset$ for all $k \in K$. 
	
	Let $\hat{l}_1,~ \hat{l}_2, \dots$ be the set of hyperplanes dual to the edges $e_1,~ e_2, \dots$. Choose halfspaces $l_1 \supset l_2 \supset l_3 \dots$ such that $v \in l_i^*$ for all $i \ge 1$. Note that $\hat{l}_i \not\subset k^*$ for any $k \in K$, since $\hat{k}$ does not intersect $\gamma$ and $v \in k$ for any $k \in K$. It follows that the set of halfspaces $H = K \cup \bigcup_{i=1}^\infty l_i$ satisfies the consistency condition. By \cite[Section 3.3]{Rol} there is some ultrafilter, $\alpha$ containing $H$. Furthermore, $\alpha \in \partial X$ as it contains an infinite descending chain of halfspaces. Thus, \ref{lemma_straight_link_cages1} follows.
	
	To prove \ref{lemma_straight_link_cages2}, note that if $\hat{k} \cap k_0$ were shallow for every $k \in K$, then for large enough $N$, we would have that $l_N \subset k$ for all $k \in K$. However, this is not possible if $\bigcap_{k \in K}k$ does not contain a hyperplane.
\end{proof}

The following lemma, which is a direct consequence of \cite[Proposition 2.7]{Hagen-Susse}, shows that if $G$ acts cocompactly on $X$, then a subgroup of $G$ acts cocompactly on the intersection of a finite set of hyperplane carriers. We note that the straight links hypothesis is not needed in this lemma. 

\begin{lemma} \label{lemma_cocompact_action_on_cores}
	Let $G$ act cocompactly on a locally finite CAT(0) cube complex $X$, and let $H$ be a set of halfspaces in $X$ such that $Y = \bigcap_{h \in H}\mathcal{C}^+(h)$ is non-empty. Let $G'$ be the subgroup of $G$ that stabilizes $Y$ and stabilizes each halfspace $h \in H$. Then $G'$ acts cocompactly on $Y$.
\end{lemma}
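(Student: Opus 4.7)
The plan is to verify cocompactness by a sequential orbit-counting argument, reducing via local finiteness to a pointwise stabilization statement for $H$.

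First I would observe that the hypotheses force $H$ to be finite and $Y$ to be a convex subcomplex of $X$. Any vertex $v \in Y$ lies in $\mathcal{C}^+(h)$ for every $h \in H$, so each $\hat{h}$ is dual to an edge incident to $v$; local finiteness of $X$ then bounds the number of such hyperplanes, forcing $|H| < \infty$. Each $\mathcal{C}^+(h)$ is isometric to the hyperplane $\hat{h}$ viewed as a convex subcomplex of $X$, so their intersection $Y$ is convex.

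Next, cocompactness of $G'$ on $Y$ will follow from finiteness of $G' \backslash Y^{(0)}$, because local finiteness ensures each vertex of $X$ lies in only finitely many cubes, so finitely many vertex orbits yield finitely many cube orbits. Given a sequence $v_1, v_2, \dots$ in $Y^{(0)}$, cocompactness of $G$ on $X$ lets me pass to a subsequence and choose $g_i \in G$ and a common vertex $v_0 \in X$ with $g_i v_i = v_0$. For each $h \in H$ the inclusion $v_0 = g_i v_i \in \mathcal{C}^+(g_i h)$ shows that $g_i h$ lies in the set $S_0$ of halfspaces of $X$ whose positive carrier contains $v_0$; local finiteness of $X$ at $v_0$ makes $S_0$ finite.

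The final step is to upgrade from a bounded target to pointwise stabilization. Each $g_i$ induces an injection $H \hookrightarrow S_0$ via $h \mapsto g_i h$. Since both $H$ and $S_0$ are finite, only finitely many such injections exist, so by passing to a further subsequence I may assume all these induced maps coincide. For any two indices $i,j$ in this subsequence, $g_j^{-1} g_i$ then fixes every $h \in H$, hence preserves each $\mathcal{C}^+(h)$ and thus $Y$, placing $g_j^{-1} g_i$ in $G'$. Since $g_j^{-1} g_i v_i = v_j$, all $v_i$ in the subsequence lie in a common $G'$-orbit, establishing finiteness of $G' \backslash Y^{(0)}$. I expect the only subtle point to be the pigeonhole upgrade from setwise to pointwise stabilization of $H$, which is handled routinely once $|S_0| < \infty$ is in hand.
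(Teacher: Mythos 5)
Your argument is correct and self-contained, but it takes a genuinely different route from the paper. The paper cites \cite[Proposition 2.7]{Hagen-Susse} as a black box: $Y$ is an element of a factor system, so its full setwise stabilizer $G''$ acts cocompactly on $Y$ by that external result, and then $G'$ (which stabilizes each $h \in H$ individually, not merely the set $\hat{H}$) is shown to have finite index in $G''$ by the pigeonhole observation that $G''$ permutes the finite set $H$. You instead give a direct orbit-counting proof: cocompactness of $G$ and local finiteness produce elements $g_i$ translating a sequence $v_i \in Y^{(0)}$ to a fixed base vertex $v_0$, each such $g_i$ injects $H$ into the finite set $S_0$ of halfspaces with positive carrier through $v_0$, and a pigeonhole on these finitely many injections pins down a subsequence for which $g_j^{-1}g_i$ fixes every $h \in H$ (hence lies in $G'$) and carries $v_i$ to $v_j$. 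In effect you unwind the relevant special case of the factor-system machinery into an elementary argument. What this buys is independence from \cite{Hagen-Susse}; what the paper's route buys is brevity. Both are valid. (Two small presentational points: the claim ``$\mathcal{C}^+(h)$ is isometric to $\hat{h}$, so $Y$ is convex'' is really using convexity of each $\mathcal{C}^+(h)$ rather than the isometry, and in any case convexity of $Y$ is never actually used in your argument; and the final step implicitly argues by contradiction — it would read a touch cleaner to start from a hypothetical infinite list of pairwise $G'$-inequivalent vertices and derive the contradiction directly.)
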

\begin{proof}
	The subcomplex $Y$ is an element of a factor system (we refer to \cite{Hagen-Susse} for a definition) on $X$. By \cite[Proposition 2.7]{Hagen-Susse}, the stabilizer $G''$ of $Y$ acts cocompactly on $Y$. As $X$ is locally finite, $H$ is finite, and $G''$ permutes the halfspaces in $H$. Thus, $G'$ is a finite index subgroup of $G''$. Consequently, $G'$ acts cocompactly on $Y$.
\end{proof}

\begin{definition}[Sector]
	Let $\mathcal{S}$ be a finite collection of halfspaces in a CAT(0) cube complex such that $\hat{s} \cap \hat{s}' \neq \emptyset$ for all $\hat{s}, \hat{s}' \in \hat{\mathcal{S}}$. We call the intersection $\bigcap_{s \in \mathcal{S}}s$ a \textit{sector}.
\end{definition}

The lemma below shows that if a CAT(0) cube complex satisfies certain assumptions, then every sector in the cube complex contains a hyperplane.

\begin{lemma} \label{lemma_straight_link_sectors}
	Let $X$ be a cocompact, locally finite, irreducible, non-Euclidean CAT(0) cube complex with straight links, then every sector contains a hyperplane.
\end{lemma}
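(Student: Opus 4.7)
The plan is to argue by contradiction: suppose a sector $S = \bigcap_{s \in \mathcal{S}} s$ contains no hyperplane of $X$. First I would verify that $\mathcal{S}$ is a cage in the sense of Definition \ref{def_small_sets}. By hypothesis the hyperplanes in $\hat{\mathcal{S}}$ pairwise intersect, so Helly's Property furnishes a common cube $C$, and the unique vertex of $C$ lying on the positive side of every $\hat{s}$ lies in $\bigcap_{s \in \mathcal{S}} \mathcal{C}^+(s)$. In particular, for any choice of $k_0 \in \mathcal{S}$ the hypothesis $\hat{k}_0 \cap \hat{k} \neq \emptyset$ for all $k \in \mathcal{S}$ of Lemma \ref{lemma_straight_link_cages} holds automatically, so part (2) of that lemma provides halfspaces $k_0, k \in \mathcal{S}$ for which $\hat{k}$ is deep in $k_0$: the set $\hat{k} \cap k_0$ is unbounded away from $\hat{k}_0$ and sits inside $S$.

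Next I would exploit this deepness together with the cocompact action of $G$ on $X$. The stabilizer of $\hat{k}$ in $G$ acts cocompactly on $\hat{k}$ (a standard consequence of cocompactness), and among elements also preserving the halfspace $k_0$, the unboundedness of $\hat{k} \cap k_0$ forces the existence of a hyperbolic element $g \in G$ whose translation axis lies inside $\hat{k} \cap k_0 \subset S$ and moves points arbitrarily far from $\hat{k}_0$. Starting from points on this axis and iteratively applying the straight-links construction used in the proof of Lemma \ref{lemma_straight_link_cages}(1), I would extract a $\langle g \rangle$-invariant combinatorial quadrant contained in $S$: one direction runs along the axis, the other is built from orthogonal straight rays produced via straight links.

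Finally, combining this invariant structure with the pairwise intersecting hyperplanes in $\hat{\mathcal{S}}$ via Lemma \ref{lemma_cocompact_action_on_cores} applied to suitable sub-collections of halfspaces, I would assemble either an $\text{Aut}(X)$-invariant Euclidean flat of dimension $\ge 2$ inside $X$ or a $G$-invariant product decomposition of $X$. Either conclusion contradicts one of the standing hypotheses (non-Euclidean or irreducible), completing the argument. The main obstacle is this final assembly step: deepness is a purely combinatorial condition, and converting it into a concrete Euclidean factor or product splitting of $X$ will likely require carefully arranging $G$-translates of $\hat{k}$ using the theory of essential cores and rank-one isometries in cocompact CAT(0) cube complexes.
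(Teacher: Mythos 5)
Your proposal has two genuine gaps, and the route you sketch is substantially harder than the paper's.

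First, a concrete error: you assert $\hat{k} \cap k_0 \subset S$, but this is false unless $\mathcal{S} = \{k, k_0\}$. The sector $S = \bigcap_{s \in \mathcal{S}} s$ is cut out by \emph{all} the halfspaces in $\mathcal{S}$, whereas $\hat{k} \cap k_0$ only records two of them. Deepness of $\hat{k}$ in $k_0$ says nothing about where the deep points sit relative to the other $s \in \mathcal{S}$, so your proposed axis (even if it existed) need not lie anywhere near $S$. Handling the interaction of the remaining halfspaces is exactly what requires work, and your proposal has no mechanism for it. The paper deals with this by inducting on $|\mathcal{S}|$: having found a hyperplane $\hat{h}$ inside $s_2 \cap \cdots \cap s_n$, it splits into cases according to whether $\hat{h}$ lies in $s_1$, crosses $\hat{s}_1$, or lies in $s_1^*$, with the last case further subdivided by whether $\bigcap_{i \ge 2}\mathcal{C}^+(s_i)$ is deep in $s_1$; straight links and Lemma~\ref{lemma_cocompact_action_on_cores} are then used to push a hyperplane into the full intersection. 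None of that structure appears in your sketch.

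Second, and more seriously, your final step --- converting a $\langle g \rangle$-invariant quadrant into either an $\text{Aut}(X)$-invariant flat or a $G$-invariant product decomposition --- is where the entire content of the lemma lives, and you explicitly flag it as unresolved. Note that a CAT(0) cube complex can contain plenty of combinatorial quadrants and even isometrically embedded flats without being Euclidean (Euclidean in this paper means an $\text{Aut}(X)$-\emph{invariant} flat) or reducible; the rank rigidity machinery that bridges this gap is exactly what \cite[Lemma 5.5]{NS} packages up. The paper's proof invokes that lemma at the base case $n=2$ and then does purely combinatorial induction, so it never has to reconstruct rank rigidity by hand. Your plan effectively commits to reproving \cite[Lemma 5.5]{NS} (or \cite[Corollary 6.4]{CS}) from scratch inside this lemma, and the sketch offers no path for doing so. I would recommend following the paper's strategy: prove the $n=2$ case by citing \cite[Lemma 5.5]{NS} (after establishing deepness via Lemma~\ref{lemma_straight_link_cages}(2), as you correctly do), and then set up the induction on $n$ with the three cases for the position of the hyperplane relative to $s_1$.
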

\begin{proof}
	As $X$ has straight links, $X$ is essential by Lemma \ref{lemma_straight_links_implies_essential}.	Let $\mathcal{S} = \{s_1, \dots, s_n\}$ be a set of halfspaces such that $s_1 \cap \dots \cap s_n$ is a sector.	We prove the claim by induction on $n$. When $n = 1$, the claim follows as $X$ is essential. When $n=2$, $\hat{s}_2 \cap s_1$ is deep by Lemma \ref{lemma_straight_link_cages} \ref{lemma_straight_link_cages2} (straight links hypothesis is used here). As $X$ is cocompact, essential, irreducible and non-Euclidean, by \cite[Lemma 5.5]{NS} the sector $s_1 \cap s_2$ contains a hyperplane, and the claim follows in this case as well.
	
	Assume now that $n>2$ and that the claim is true for any sector with less than $n$ halfspaces. In particular, the sector $s_2 \cap s_3 \cap  \dots \cap s_n$ contains a hyperplane $\hat{h}$. If $\hat{h} \subset s_1$, then we are done.
	
	Suppose $\hat{h}$ intersects $\hat{s}_1$. Let $h$ denote the choice of halfspace for $\hat{h}$ such that $h \subset s_i$ for all $2 \le i \le n$. By induction, the sector $h \cap s_1$ contains a hyperplane $\hat{k}$. We are then done in this case as $\hat{k} \subset s_1 \cap \dots \cap s_n$.
	
	For the final case, suppose $\hat{h} \subset s_1^*$. Let $Z = \bigcap_{i=2}^{n}\mathcal{C}^+(s_i)$, which is not empty by Helly's property. The proof of this case breaks down to two subcases depending on whether $Z$ is deep in $s_1$.
	
	Suppose first that $Z$ is not deep in $s_1$. Choose a geodesic $\gamma$ from $\mathcal{C}^+(s_1)$ to some vertex $v \in Z$ such that $|\gamma|$ is maximal. As $X$ has straight links, there exists an edge $e$ adjacent to $v$ such that the hyperplane, $\hat{k}$, dual to $e$ does not intersect $\gamma$. Note that the concatenation $\gamma \cup e$ is a geodesic as well. 
	
	We claim $\hat{k}$ does not intersect some hyperplane, $\hat{s}_r \in \hat{\mathcal{S}}$, where $2 \le r \le n$. For, suppose otherwise, that $\hat{k} \cap \hat{s}_i \neq \emptyset$ for all $2 \le i \le n$, then by Helly's property $\hat{k} \cap Z \neq \emptyset$. As $Z$ is convex, this implies that $e \subset Z$ by Lemma \ref{lemma_convexity1}. However, as $\gamma \cup e$ is a geodesic of longer length than $\gamma$, this contradicts our choice of $\gamma$.
	
	Let $\mathcal{S}_{\perp}$ be the set of halfspaces in $\mathcal{S}$ whose corresponding hyperplanes intersect $\hat{k}$, and let $\mathcal{S}_{\parallel}$ be the set of halfspaces in $\mathcal{S}$ whose corresponding hyperplanes do not intersect $\hat{k}$. It follows that $s_r \in \mathcal{S}_{\parallel}$ by what we have shown. Furthermore, $s_1 \in \mathcal{S}_{\parallel}$ by Lemma \ref{lemma_convexity2}. Thus, $|\mathcal{S}_{\perp}| \le n-2$. 
	
	Let $k$ be the choice of halfspace for $\hat{k}$ such that $k \subset s$ for all $s \in \mathcal{S}_{\parallel}$. By the induction hypothesis, the sector $k \cap (\bigcap_{s \in \mathcal{S}_{\perp}}{s})$ contains a hyperplane $\hat{m}$. Thus, $\hat{m} \subset s_1 \cap \dots \cap s_n$.
	
	On the other hand, suppose $Z$ is deep. Let $G'$ be the subgroup of $G$ that stabilizes $Z$ and stabilizes the halfspace $s_i$ for each $2 \le i \le n$. As $G$ acts cocompactly and $X$ is locally finite, by Lemma \ref{lemma_cocompact_action_on_cores}, $G'$ acts cocompactly on $Z$. Thus, there exists some element $g \in G'$ such that 
	\[g \hat{h} \subset s_2 \cap \dots \cap s_n \text{ and } g \hat{h} \cap s_1 \neq \emptyset\] 
	If $g \hat{h}$ is contained in $s_1$ then we are done. Otherwise, $g \hat{h}$ intersects $\hat{s}_1$ and we are done by applying the previous case.
\end{proof}

\section{Tight Cages} \label{sec_tight_cages}

In this section, we introduce tight cages. Under the extra assumption of straight links, we will show that  $\partial X \neq B(X)$ if and only if $X$ contains a tight cage. In Section \ref{sec_2d_case}, the structure of tight cages is used to deduce Theorem \ref{thm_2_dim_ccc} on $2$--dimensional CAT(0) cube complexes.

\begin{figure}[htp]
	\centering
	\begin{overpic}[scale=.5]{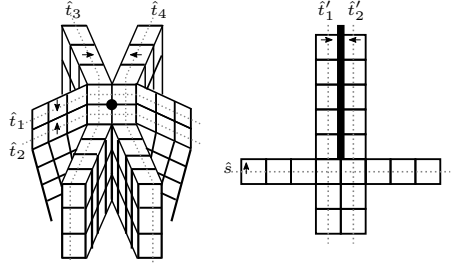}
		\put(-3,32){\tiny $\hat{t}_1$}	
		\put(-3,25){\tiny $\hat{t}_2$}	
		\put(10,58){\tiny $\hat{t}_3$}	
		\put(30,58){\tiny $\hat{t}_4$}
		\put(69,58){\tiny $\hat{t}_1'$}	
		\put(76,58){\tiny $\hat{t}_2'$}	
		\put(47,21){\tiny $\hat{s}$}	
	\end{overpic}
	\caption{Sections of tight cages, $(\mathcal{S} = \{ \emptyset \}, \mathcal{T} = \{t_1, t_2, t_3, t_4\})$ on the left and $(\mathcal{S}' = \{ s \}, \mathcal{T}' = \{t_1', t_2'\})$ on the right, are shown. The dotted lines represent hyperplanes, and the arrows represent the choice of halfspace. The bold circle on the left and the bold line on the right represent segments in the cores of the tight cages. 
	} \label{fig_tight_cage}
\end{figure}

\begin{definition}[Tight Cage] \label{def_tight_cage}
	A \textit{tight cage} is a pair, $(\mathcal{S}, \mathcal{T}$), where $\mathcal{S}$ and $\mathcal{T}$ are each a finite collection of halfspaces satisfying:
	\begin{enumerate}
		\item \label{def_tight cage_large_T} $|\mathcal{T}| \ge 2$
		
		\item \label{def_tight_cage_intersections} For all $s \in \mathcal{S}$ and all $h  \in \mathcal{S} \cup \mathcal{T}$, $\hat{s} \cap \hat{h} \neq \emptyset$. In particular, $\bigcap_{s \in S}s$ is a sector.
		 
		\item \label{def_tight cage_unbounded_core}  The set 
		\[Y = \bigcap_{t \in \mathcal{T}} \mathcal{C}^+(t) \cap \bigcap_{s \in \mathcal{S}} s\]
		 is unbounded. We call $Y$ the \textit{core} of the tight cage.
		 
		\item \label{def_tight cage_core_prop}  Given a hyperplane $\hat{h}$ that is dual to an edge adjacent to the core $Y$, either $\hat{h} \in \hat{\mathcal{S}} \cup \hat{\mathcal{T}}$ or $\hat{h}$ intersects every hyperplane in $\hat{\mathcal{T}}$.
	\end{enumerate} 
	We refer to the four conditions above as the \textit{tight cage conditions}.
\end{definition}

\begin{remark}
	In the definition of a tight cage, $\mathcal{S}$ is allowed to be empty.
\end{remark}
\begin{remark}
	The core of a tight cage is always convex as it is the intersection of convex sets.
\end{remark}

We first check that every tight cage is indeed a cage and that this cage does not contain a hyperplane.

\begin{proposition} \label{prop_tight_cages_are_cages}
	Let $(\mathcal{S}, \mathcal{T})$ be a tight cage, then $K = \mathcal{S} \cup \mathcal{T}$ is a cage that does not contain a hyperplane.
\end{proposition}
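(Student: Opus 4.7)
The proof splits into showing that $K$ is a cage and then that $K$ contains no hyperplane. For the first part, the plan is to apply Helly's property to the convex subcomplexes $\{\mathcal{C}^+(k)\}_{k \in K}$. Pairwise intersection is immediate: for $s\in\mathcal{S}$ paired with any $h\in\mathcal{S}\cup\mathcal{T}$, condition~(2) gives $\hat s\cap\hat h\neq\emptyset$, and the $2$-cube witnessing this intersection has a vertex in $\mathcal{C}^+(s)\cap\mathcal{C}^+(h)$; for $t,t'\in\mathcal{T}$, the core $Y$ is nonempty by condition~(3) and sits inside $\mathcal{C}^+(t)\cap\mathcal{C}^+(t')$. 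Helly's property then supplies a common vertex $v$, whose vertex ultrafilter $\alpha_v$ realizes $K$ as a cage.

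For the no-hyperplane claim, I would argue by contradiction: assume a hyperplane $\hat h$ has $\mathcal{C}(\hat h) \subset \bigcap_{k\in K} k$. Then $\hat h$ cannot cross any $\hat k$ (a common midcube would place a carrier vertex of $\hat h$ in $k^*$), so $\hat h \notin \hat K$. The plan is to produce an edge adjacent to $Y$ whose dual hyperplane violates condition~(4). If $\mathcal{C}(\hat h)\cap Y\neq\emptyset$, pick $u$ in the intersection; the edge at $u$ dual to $\hat h$ is adjacent to $Y$, and condition~(4) demands $\hat h\in\hat K$ or $\hat h$ meeting every hyperplane in $\hat{\mathcal{T}}$, both already ruled out. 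Otherwise, take a shortest path $\gamma$ from $\mathcal{C}(\hat h)$ to $Y$ of length $\ell\geq 1$, with last edge $e_\ell=(v_{\ell-1},v)$, $v\in Y$, $v_{\ell-1}\notin Y$, dual to a hyperplane $\hat m$. A geodesic-parity argument rules out $\hat m=\hat k$ for any $k\in K$: otherwise $v_{\ell-1}\in k^*$ while the starting vertex of $\gamma$ lies in $\mathcal{C}(\hat h)\subset k$, forcing a second edge of $\gamma$ to be dual to $\hat k$ and contradicting that $\gamma$ is geodesic. Condition~(4) applied to $e_\ell$ then gives $\hat m\cap\hat t\neq\emptyset$ for every $t\in\mathcal{T}$.

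The main obstacle is turning this abstract intersection into a concrete contradiction at $v_{\ell-1}$. Since $e_\ell$ crosses none of the $\hat k$, the vertex $v_{\ell-1}$ lies in every halfspace of $K$; being outside $Y$, there must be some $t_0\in\mathcal{T}$ with $v_{\ell-1}\notin\mathcal{C}^+(t_0)$. But $v\in\mathcal{C}^+(t_0)$ supplies an edge $e'$ at $v$ dual to $\hat t_0$, and since $\hat m\cap\hat t_0\neq\emptyset$ and $v\in\mathcal{C}(\hat m)\cap\mathcal{C}(\hat t_0)$, the standard product structure of carrier intersections in CAT(0) cube complexes yields a $2$-cube at $v$ with sides $e_\ell$ and $e'$. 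The side of this $2$-cube opposite $e'$ is a $\hat t_0$-dual edge incident to $v_{\ell-1}$, placing $v_{\ell-1}$ in $\mathcal{C}(\hat t_0)\cap t_0=\mathcal{C}^+(t_0)$, the required contradiction.
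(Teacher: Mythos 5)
Your proof is correct, and it takes a genuinely different route in the second half. For the cage claim you use the same Helly argument as the paper, just spelling out why the positive carriers pairwise meet (via condition~(2) for pairs involving $\mathcal{S}$ and via the nonempty core $Y$ for pairs in $\mathcal{T}$). For the no-hyperplane claim, both you and the paper split into the cases $\mathcal{C}(\hat h)\cap Y\neq\emptyset$ and $d(\mathcal{C}(\hat h),Y)\ge 1$, and both take the last edge $e_\ell$ of a shortest path from $\mathcal{C}(\hat h)$ to $Y$, but then diverge. The paper invokes its disk-diagram Lemma~\ref{lemma_convexity2} to see that the last-edge hyperplane $\hat k$ meets neither $Y$ nor $\hat h$, then concludes $\hat k\in\hat K$ (with the alternative of condition~(4), $\hat k$ crossing every $\hat t$, ruled out implicitly), and finally contradicts $\hat h\subset\bigcap_{k\in K}k$. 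You instead use a direct double-crossing argument to show $\hat m\notin\hat K$, so condition~(4) forces $\hat m$ to cross every $\hat t$, and then the product structure of $\mathcal{C}(\hat m)\cap\mathcal{C}(\hat t_0)$ at the vertex $v$ propagates a $\hat t_0$-dual edge to $v_{\ell-1}$, contradicting $v_{\ell-1}\notin\mathcal{C}^+(t_0)$. Your approach avoids Lemma~\ref{lemma_convexity2} (and hence disk diagrams), at the cost of invoking the carrier-intersection square fact explicitly; but the paper's implicit step (ruling out the second alternative of condition~(4) for the last-edge hyperplane) rests on the same fact, so you have really just surfaced a detail the paper leaves unstated. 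One small phrasing nit: after arguing that $\hat h$ cannot cross any $\hat k$, you conclude $\hat h\notin\hat K$; ruling out $\hat h=\hat k$ itself is not literally a consequence of "cannot cross" but follows immediately from $\mathcal{C}(\hat k)\not\subset k$, which your carrier-vertex observation already covers.
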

\begin{proof}
	By tight cage condition \ref{def_tight_cage_intersections}, tight cage condition \ref{def_tight cage_unbounded_core} and Helly's property, it follows that the intersection
	\[ \bigcap_{t \in \mathcal{T}} \mathcal{C}^+(t) \cap \bigcap_{s \in \mathcal{S}} \mathcal{C}^+(s) \]
	is non-empty. Thus $K$ is a cage.
	
	Let $Y$ be the core of the tight cage $(\mathcal{S}, \mathcal{T})$. Suppose for a contradiction, that there exists a hyperplane $\hat{h} \subset \bigcap_{k \in K}k$. Such a hyperplane cannot intersect $Y$ by definition. If $\hat{h}$ is dual to an edge that is adjacent to a vertex of $Y$, then by tight cage condition \ref{def_tight cage_core_prop}, it follows that $\hat{h} \in \hat{K}$. This is also not possible. We may thus assume that $d(\mathcal{C}(\hat{h}), Y) >0$.
	
	Let $\gamma$ be a geodesic from $\mathcal{C}(\hat{h})$ to $Y$. By Lemma \ref{lemma_convexity2}, the hyperplane, $\hat{k}$, dual to the last edge of $\gamma$ does not intersect $Y$ and does not intersect $\hat{h}$. Let $k$ be the choice of halfspace for $\hat{k}$ so that $k$ contains $Y$. 
	
	As $\hat{k}$ is dual to an edge adjacent to $Y$ and $\hat{k}$ does not intersect $Y$, it follows that $k \in K$ by tight cage condition \ref{def_tight cage_core_prop}. However, this implies $\hat{h} \not\subset k$, a contradiction.
\end{proof}

Let $\Gamma$ be a graph. Recall that $\Gamma$ is a \textit{join} if there is a nontrivial decomposition of the vertices of $\Gamma$, $V(\Gamma) = V_1 \cup V_2$, such that every vertex in $V_1$ is adjacent to every vertex in $V_2$. If $\Gamma$ is a flag simplicial complex (e.g. $\Gamma$ is the link of a vertex of a CAT(0) cube complex), then we say $\Gamma$ is a join if its $1$--skeleton is a join.

A consequence of $X$ containing a tight cage is that $X$ contains a convex, unbounded subset $Y$ (namely the core) such that the link, taken in $X$, of every vertex in $Y$ is a join. This is proven in the proposition below. This proposition is particularly useful as it is often easy to deduce that $X$ does not contain such a subset and, consequently, does not contain a tight cage.

\begin{proposition} \label{prop_core_links}
	Let $(\mathcal{S}, \mathcal{T})$ be a tight cage in a CAT(0) cube complex $X$,  and let $Y$ be the core of the tight cage. Then the link, taken in $X$, of any vertex in $Y$ is a join.
\end{proposition}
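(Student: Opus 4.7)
The plan is to exhibit an explicit join decomposition of the link of $v$ in $X$, denoted $\mathrm{lk}(v)$. Take $L_1$ to be the set of vertices of $\mathrm{lk}(v)$ corresponding to edges at $v$ dual to some hyperplane in $\hat{\mathcal{T}}$, and $L_2$ to be its complement in $\mathrm{lk}(v)$. I will then verify (i) $L_1 \neq \emptyset$, (ii) $L_2 \neq \emptyset$, and (iii) every vertex of $L_1$ is adjacent to every vertex of $L_2$ in $\mathrm{lk}(v)$. The underlying fact I will use is the standard one that, in a CAT(0) cube complex, two distinct edges at a common vertex span a square if and only if their dual hyperplanes intersect, so adjacency in $\mathrm{lk}(v)$ is equivalent to intersection of the corresponding dual hyperplanes.

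For (i): since $v \in Y \subset \mathcal{C}^+(t)$ for each $t \in \mathcal{T}$, there is an edge at $v$ dual to $\hat{t}$ for each such $t$, which contributes a vertex to $L_1$. As $|\mathcal{T}| \geq 2$ by tight cage condition \ref{def_tight cage_large_T}, in fact $|L_1| \geq 2$. For (ii): the core $Y$ is unbounded by tight cage condition \ref{def_tight cage_unbounded_core} and convex (as noted in the remark following Definition \ref{def_tight_cage}), so there exists $v' \in Y$ with $v' \neq v$, and any geodesic from $v$ to $v'$ lies in $Y$. Its first edge $e$ is then contained in $Y$, and both endpoints of $e$ lie in $Y$, hence in $t$ for every $t \in \mathcal{T}$. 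Consequently no $\hat{t}$ separates the endpoints of $e$, so the hyperplane dual to $e$ is not in $\hat{\mathcal{T}}$, and the corresponding vertex of $\mathrm{lk}(v)$ lies in $L_2$.

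For (iii): let $w \in L_1$ correspond to an edge dual to some $\hat{t}$ with $t \in \mathcal{T}$, and let $w' \in L_2$ correspond to an edge dual to a hyperplane $\hat{h} \notin \hat{\mathcal{T}}$. Since $\hat{h}$ is dual to an edge adjacent to the core (namely, adjacent to $v \in Y$), tight cage condition \ref{def_tight cage_core_prop} forces either $\hat{h} \in \hat{\mathcal{S}} \cup \hat{\mathcal{T}}$ or $\hat{h}$ to intersect every hyperplane in $\hat{\mathcal{T}}$. As $\hat{h} \notin \hat{\mathcal{T}}$, either $\hat{h} \in \hat{\mathcal{S}}$ (and then $\hat{h} \cap \hat{t} \neq \emptyset$ by tight cage condition \ref{def_tight_cage_intersections}) or $\hat{h}$ intersects every hyperplane in $\hat{\mathcal{T}}$. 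Either way, $\hat{h} \cap \hat{t} \neq \emptyset$, so $w$ and $w'$ are adjacent in $\mathrm{lk}(v)$.

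The argument is rather direct once the correct partition $L_1, L_2$ is identified. The only mildly subtle step is (ii): producing an edge at $v$ whose dual hyperplane is not in $\hat{\mathcal{T}}$ genuinely requires the unboundedness of the core coming from tight cage condition \ref{def_tight cage_unbounded_core}, while the other two steps follow essentially immediately from the remaining tight cage conditions.
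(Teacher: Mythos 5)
Your proof is correct and follows essentially the same approach as the paper: both decompose the link by putting the $\hat{\mathcal{T}}$-vertices on one side and everything else on the other, and both rely on tight cage conditions \ref{def_tight cage_core_prop} and \ref{def_tight_cage_intersections} to get the joining edges and on conditions \ref{def_tight cage_large_T} and \ref{def_tight cage_unbounded_core} for nonemptiness of the two parts. A minor stylistic advantage of your version is that by defining $L_2$ directly as the complement of $L_1$, you avoid having to verify separately that the sets $\hat{\mathcal{S}}'$, $\hat{\mathcal{T}}$, and $\hat{H}$ in the paper's proof exhaust all hyperplanes dual to edges at $v$; otherwise the arguments coincide.
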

\begin{proof}
	Let $v \in Y$. Let $\hat{H}$ be the set of hyperplanes which are dual to an edge adjacent to $v$ and which intersect $Y$. As $Y$ is connected and unbounded (tight cage condition \ref{def_tight cage_unbounded_core}), the set $\hat{H}$ is not empty. Let $\hat{\mathcal{S}}'$ be the (possibly empty) subset of hyperplanes in $\hat{\mathcal{S}}$ that are dual to an edge adjacent to $v$. 
	
	As $Y$ is contained in the intersection of positive carriers of hyperplanes in $\mathcal{T}$, it follows that every hyperplane in $\mathcal{T}$ is dual to an edge adjacent to $v$. Furthermore, by definition of a tight cage, every hyperplane dual to an edge adjacent to $v \in Y$ is in the set $\hat{\mathcal{S}'} \cup \hat{\mathcal{T}} \cup \hat{H}$. Additionally, by tight cage conditions \ref{def_tight cage_core_prop} and \ref{def_tight_cage_intersections}, all hyperplanes in $\hat{H} \cup \hat{S}'$ intersect all hyperplanes in $\hat{\mathcal{T}}$, and by tight cage condition \ref{def_tight cage_large_T} the set $\hat{\mathcal{T}}$ is not empty.
	
	Let $\Gamma$ be the link of $v$. Every vertex of $\Gamma$ is naturally contained in a distinct hyperplane of $X$. Let $V_1$ be the set of vertices in $V(\Gamma)$ contained in $\hat{H} \cup \hat{S}'$, and let $V_2$ be the set of vertices in $\Gamma$ contained in $\hat{\mathcal{T}}$. Note that two vertices in $\Gamma$ are adjacent if the corresponding hyperplanes they are contained in intersect.	It follows that $V_1 \cup V_2$ is a join decomposition of $\Gamma$.
\end{proof}

We are now ready to prove the main result of this section. 

\begin{proposition} \label{prop_tight cage}
	Let $X$ be a cocompact, locally finite, irreducible, non-Euclidean, CAT(0) cube complex with straight links. Then $X$ contains a tight cage if and only if $X$ does not have caged hyperplanes.
\end{proposition}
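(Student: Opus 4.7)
My proof plan treats the two directions separately.

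For the $(\Leftarrow)$ direction, given a tight cage $(\mathcal{S},\mathcal{T})$, set $K=\mathcal{S}\cup\mathcal{T}$. Proposition \ref{prop_tight_cages_are_cages} already tells me $K$ is a cage for which $\bigcap_{k\in K}k$ contains no hyperplane, so it suffices to produce an $\alpha\in\partial X$ with $K\subset\alpha$. Tight cage condition (3) gives an unbounded, connected, locally finite subcomplex $Y$, so K\"onig's lemma supplies an infinite geodesic ray $v_0,v_1,v_2,\ldots$ in $Y$; the associated vertex ultrafilters limit to an ultrafilter $\alpha$ in the compact space $\mathcal{U}(X)$. Since $d(v_n,b)\to\infty$, $\alpha$ is not a vertex and so $\alpha\in\partial X$, and since $v_n\in Y\subset\bigcap_{k\in K}k$ for every $n$, Lemma \ref{lemma_finite_subsets_in_limit} gives $K\subset\alpha$.

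For the harder $(\Rightarrow)$ direction, I would select a witness pair $(\alpha,K)$ to the failure of caged hyperplanes subject to two minimality conditions: first minimize $|K|$, and among those cages of minimal size, minimize the halfspace intersection $\bigcap_{k\in K}k$ in the inclusion order. Partition $\hat K = \hat{\mathcal{S}}\sqcup\hat{\mathcal{T}}$, where $\hat{\mathcal{S}}$ consists of the hyperplanes in $\hat K$ intersecting every other element of $\hat K$, and set $Y=\bigcap_{t\in\mathcal{T}}\mathcal{C}^+(t)\cap\bigcap_{s\in\mathcal{S}}s$. Condition (2) of the tight cage definition is immediate. For condition (1), if $\mathcal{T}$ were empty then every pair in $\hat K$ would intersect, making $\bigcap_{k\in K}k$ a sector and producing a hyperplane inside via Lemma \ref{lemma_straight_link_sectors}, contradicting the choice of $K$; and any $\hat t\in\hat{\mathcal{T}}$ fails to intersect some $\hat t'\in\hat K$ which is forced to lie in $\hat{\mathcal{T}}$, so $|\mathcal{T}|\ge 2$.

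For condition (4), suppose some hyperplane $\hat h\notin\hat K$ dual to an edge adjacent to a vertex $v\in Y$ satisfies $\hat h\cap\hat t_0=\emptyset$ for some $\hat t_0\in\hat{\mathcal{T}}$. Letting $h$ be the halfspace of $\hat h$ containing $Y$, the disjointness condition together with $Y\subset t_0$ forces $h\subsetneq t_0$. Replacing $t_0$ by $h$ in $K$ yields $K'=(K\setminus\{t_0\})\cup\{h\}$ with $|K'|=|K|$ and $\bigcap_{k\in K'}k\subsetneq\bigcap_{k\in K}k$. Verifying $K'$ is a cage contained in some $\alpha'\in\partial X$ --- the former via a Helly-style argument among positive carriers starting from $v$, and the latter via Lemma \ref{lemma_straight_link_cages}(1) (when $\mathcal{S}\neq\emptyset$) or the K\"onig argument above (when $\mathcal{S}=\emptyset$) --- would contradict the secondary minimality.

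The main obstacle is condition (3), unboundedness of $Y$. Assuming $Y$ is bounded (hence finite by local finiteness), my plan is to apply the straight links hypothesis at a vertex of $Y$ to produce an infinite geodesic ray emanating from $Y$; this ray must exit $Y$ through an edge whose dual hyperplane, by condition (4) just established, either lies in $\hat K$ (enabling a reduction in $|K|$ via the deep/shallow dichotomy of Lemma \ref{lemma_straight_link_cages}(2)) or crosses every element of $\hat{\mathcal{T}}$, in which case its halfspace can be adjoined to $\mathcal{S}$, strictly shrinking $\bigcap_{k\in K}k$ and again violating minimality. The irreducibility and non-Euclidean hypotheses enter through Lemma \ref{lemma_straight_link_sectors} to produce sector hyperplanes, and cocompactness through Lemma \ref{lemma_cocompact_action_on_cores} to translate witnesses when $Y$ is bounded.
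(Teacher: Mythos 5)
Your backward direction via K\"onig's lemma is a valid alternative to the paper's: the paper instead produces $\alpha\in\partial X$ through Lemma~\ref{lemma_straight_link_cages}(\ref{lemma_straight_link_cages1}), first replacing $\mathcal T$ by $\mathcal T\cup\{h\}$ for a hyperplane $\hat h$ crossing the core when $\mathcal S=\emptyset$. Your route is arguably cleaner since it avoids the case split and only uses that the core is unbounded and $X$ is locally finite.

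The forward direction, however, has several genuine problems. First, your secondary minimization ``minimize $\bigcap_{k\in K}k$ in the inclusion order among cages of minimal size'' is over a partial order with no a priori well-foundedness, so a minimal witness need not exist; the paper avoids this by minimizing the integer $\Theta(K)=|\{\{\hat h,\hat k\}\subset\hat K: \hat h\cap\hat k=\emptyset\}|$, which is always a well-defined nonnegative quantity. Second, the orientation in your condition-(4) argument is backwards: if $h$ is the halfspace of $\hat h$ containing $Y$, then in fact $h^*\subsetneq t_0$, not $h\subsetneq t_0$. Indeed, a vertex $v\in Y\subset\mathcal C^+(t_0)$ has a neighbor $w\in\mathcal C^+(t_0^*)\subset t_0^*$ across an edge dual to $\hat t_0$; since $\hat h\neq\hat t_0$, that edge does not cross $\hat h$, so $w\in h\cap t_0^*\neq\emptyset$ and $h\not\subset t_0$. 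Third, and most importantly, even after fixing the orientation, the one-for-one replacement $K'=(K\setminus\{t_0\})\cup\{h\}$ need not be a cage: for any $k\in K$ whose hyperplane $\hat k$ does not cross $\hat h$, there is no reason $\mathcal C^+(h)$ should meet $\mathcal C^+(k)$. The paper handles this by discarding all such $k$ and forming $K'=\{h\}\cup\mathcal T'\cup\mathcal S'$ (only retaining halfspaces whose hyperplanes cross $\hat h$), which changes $|K|$ --- exactly why $|K|$ cannot serve as the complexity and $\Theta$ is used instead. Finally, the argument for unboundedness of $Y$ is too vague to evaluate: ``adjoining $h$ to $\mathcal S$'' increases $|K|$ and so does not contradict your primary minimality, and Lemma~\ref{lemma_straight_link_cages}(\ref{lemma_straight_link_cages2}) requires a distinguished $k_0$ crossing all of $\hat K$, which fails when $\mathcal S=\emptyset$. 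The paper instead proves a stronger sub-claim applying to arbitrary hyperplanes compatible with $K$, then runs a nested chain $l_1\supset l_2\supset\cdots$ in $\alpha$ through it together with Lemma~\ref{lemma_intersects_inf_many} and Helly's property to conclude $\mathcal C(\hat l_i)\cap Y\neq\emptyset$ for all large $i$; none of this is captured by your sketch.
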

\begin{proof}
	For one direction, suppose that $X$ contains a tight cage $(\mathcal{S}, \mathcal{T})$. By Proposition \ref{prop_tight_cages_are_cages}, $K = \mathcal{S} \cup \mathcal{T}$ is a cage and $\bigcap_{k \in K}k$ does not contain a hyperplane. If $\mathcal{S} \neq \emptyset$, then given $s \in \mathcal{S}$, $\hat{s}$ intersects every hyperplane in $\hat{K}$. Consequently, as $X$ has straight links, by Lemma \ref{lemma_straight_link_cages}(\ref{lemma_straight_link_cages1}) there exists an ultrafilter $\alpha \in \partial X$ such that $K \subset \alpha$. Thus, in this case $X$ does not have caged hyperplanes. 
	
	On the other hand, suppose that $\mathcal{S} = \emptyset$.  Let $Y$ be the core of $(\mathcal{S}, \mathcal{T})$. By tight cage condition \ref{def_tight cage_unbounded_core}, $Y$ is unbounded and connected. In particular, there exists a hyperplane $\hat{h}$ that intersects $Y$. Consequently, $\hat{h}$ intersects every hyperplane in $\mathcal{T}$. Let $h$ be any choice of halfspace for $\hat{h}$. It follows that $K' = \mathcal{T} \cup h$ is a cage and $\bigcap_{k \in K'}k$ does not contain a hyperplane. By Lemma \ref{lemma_straight_link_cages}(\ref{lemma_straight_link_cages1}), there exists an ultrafilter in $\partial X$ that contains $K'$. Thus, $X$ does not have caged hyperplanes in this case as well. This proves one direction of the proposition.
	
	We now assume that $X$ does not have caged hyperplanes, and we show $X$ contains a tight cage.	We begin by defining a complexity function, $\Theta$, that assigns an integer to every finite set of hyperplanes. Namely, given a finite set of hyperplanes, $H$, let $\Theta(H)$ denote the number of unordered pairs of non-intersecting hyperplanes in $\hat{H}$:
	\[\Theta(H) = \Big|\Big\{ \{\hat{h}, \hat{k}\} \subset \hat{H} ~|~ \hat{h} \cap \hat{k} = \emptyset \Big\} \Big| \]
	
	As $X$ does not have caged hyperplanes, there is an ultrafilter $\alpha \in \partial X$ and a cage $K$ in $\alpha$ such that $\bigcap_{k \in K}k$ does not contain a hyperplane. Furthermore, we choose $K$ and $\alpha$ so that $\Theta(K)$ is minimal out of all such possible choices.
	
	Note that $\Theta(K) = 0$ if and only if the intersection of the halfspaces in $K$ forms a sector. By Lemma \ref{lemma_straight_link_sectors} (note that this lemma requires every hypothesis that we have placed on $X$) any sector in $X$ contains a hyperplane. Consequently $K$ cannot be a sector, and $\Theta(K) \ge 1$.
	
	Let $\mathcal{S}$ be halfspaces in $K$ whose corresponding hyperplane intersects every hyperplane in $\hat{K}$. Namely,
	\[\mathcal{S} = \{s \in K \mid \hat{s} \cap \hat{k} \neq \emptyset, ~\forall \hat{k} \in \hat{K} \} \]
	In particular, the intersection of halfspaces in $\mathcal{S}$ is a sector. Let $\mathcal{T} = K \setminus \mathcal{S}$. Note that $|\mathcal{T}| \ge 2$ since $\Theta(K) \ge 1$. Set 
	\[Y = \bigcap_{t \in  \mathcal{T}} \mathcal{C}^+(t) \cap \bigcap_{s \in \mathcal{S}} s\] 	
		
	We will show that $(\mathcal{S}, \mathcal{T})$ is a tight cage with $Y$ the corresponding core. We first prove the following subclaim:
	
	\begin{subclaim}
	 Let $\hat{h}$ be a hyperplane in $X$ such that for every $k \in K$, either $\hat{h} \subset k$ or $\hat{h} \cap \hat{k} \neq \emptyset$. Then $\hat{h} \cap \hat{t} \neq \emptyset$ for all $\hat{t} \in \hat{\mathcal{T}}$.	
	\end{subclaim}
	\begin{proof}		
	Suppose, for a contradiction, that $\hat{h}$ does not intersect the hyperplane, $\hat{t}_0 \in \hat{\mathcal{T}}$. In particular, $\hat{h} \cap Y = \emptyset$. Let $h$ be the choice of halfspace for $\hat{h}$ so that $Y \subset h^*$. Let $\hat{\mathcal{S}}'$ be the subset of hyperplanes in $\hat{\mathcal{S}}$ that intersect $\hat{h}$, and let $\hat{\mathcal{T}}'$ be the subset of hyperplanes of $\hat{\mathcal{T}}$ that intersect $\hat{h}$. Let $\mathcal{S}'$ and $\mathcal{T}'$ be the corresponding set of halfspace subsets of $\mathcal{S}$ and $\mathcal{T}$.
	
	Set $K' = h \cup \mathcal{T}' \cup \mathcal{S}'$. By Helly's property, $K'$ is a cage. We claim two facts: $\bigcap_{k \in K'}k$ does not contain a hyperplane and $\Theta(K') < \Theta(K)$.
	
	To see the first claim, note that by our hypothesis on $\hat{h}$, it follows that $h \subset k$ for every $k \in K \setminus (\mathcal{T}'	\cup \mathcal{S}')$. Thus $\bigcap_{k \in K'}k \subset \bigcap_{k \in K}k$ (i.e., $K' \prec K$ as in Definition \ref{def_subset_containment}). In particular, $\bigcap_{k \in K'}k$ cannot contain a hyperplane as $\bigcap_{k \in K}k$ does not contain a hyperplane. 
	
	We now show the second claim regarding $K'$. By our assumption, there exists a halfspace $t_0 \in \mathcal{T}$ such that $t_0 \notin K'$. By construction of $\mathcal{T}$, there exists a hyperplane in $\hat{\mathcal{T}}$ which does not intersect $\hat{t}_0$. Thus, $\Theta(\mathcal{T}' \cup \mathcal{S}') < \Theta(K)$. Furthermore, as $K' = h \cup \mathcal{T}' \cup \mathcal{S}'$ and $\hat{h}$ intersects every hyperplane in $K$, it follows that $\Theta(K') = \Theta(\mathcal{T}' \cup \mathcal{S}') < \Theta(K)$. This establishes the second claim.
	
	As $\hat{h}$ intersects every hyperplane in $\hat{K'}$, by Lemma \ref{lemma_straight_link_cages}(\ref{lemma_straight_link_cages1}), there exists an ultrafilter $\beta \in \partial X$, such that $K' \subset \beta$. However, $K'$ is a cage in $\beta \in \partial X$ such that $\bigcap_{k \in K'}k$ does not contain a hyperplane and $K'$ has strictly smaller complexity than $K$ ($\Theta(K') < \Theta(K)$). This contradicts the minimality of our choice of $K$ and $\alpha$. Thus, the subclaim follows.
	\end{proof}
	We now check that $(\mathcal{S}, \mathcal{T})$ satisfies the tight cage conditions. Since $|\mathcal{T}| \ge 2$, tight cage condition \ref{def_tight cage_large_T} is satisfied. Tight cage condition \ref{def_tight_cage_intersections} is satisfied by our choice of $\mathcal{S}$.
	
	We now check tight cage condition \ref{def_tight cage_core_prop}. Let $\hat{h}$ be a hyperplane, that is not in $\hat{K} = \hat{\mathcal{S}} \cup \hat{\mathcal{T}}$, and is dual to an edge adjacent to $Y$. It follows that for every $k \in K$, either $\hat{h} \subset k$ or $\hat{h} \cap \hat{k} \neq \emptyset$. By the sub-claim, $\hat{h}$ must intersect $\hat{t}$ for every $\hat{t} \in \mathcal{T}$.
	
	All that is left to show is that $Y$ is unbounded, i.e. tight cage condition \ref{def_tight cage_unbounded_core}. Let $l_1 \supset l_2 \supset l_3 \dots $ be a sequence of nested halfspaces in $\alpha$. Such a sequence must exist since $\alpha \in \partial X$. By Lemma \ref{lemma_intersects_inf_many} we may assume, by possibly passing to a subsequence, that for each $k \in K$ and for each $i \ge 1$ either $\hat{l}_i \subset k$ or $\hat{l}_i \cap \hat{k} \neq \emptyset$. It follows by the sub-claim that $\hat{l}_i \cap \hat{t} \neq \emptyset$ for all $i \ge 1$ and $\hat{t} \in \hat{\mathcal{T}}$. By Helly's property, $\mathcal{C}(l_i) \cap Y \neq \emptyset$ for all $i \ge 1$. In particular, $Y$ is unbounded. Thus, $(\mathcal{S}, \mathcal{T})$ is indeed a tight cage.
\end{proof}

\section{Euclidean and Reducible Cases} \label{sec_euc_red}

In this section, we focus on CAT(0) cube complexes that are either Euclidean or reducible. We begin with the product case below. In this setting, $B(X)$ is never equal to $\partial X$.

\begin{lemma} \label{lemma_reducible_case}
	Let $X = X_1 \times X_2$ be a locally finite, reducible CAT(0) cube complex such that $X_1$ contains an edge and $X_2$ is cocompact and essential, then $\partial X \neq B(X)$, $X$ does not have caged hyperplanes and $X$ contains a tight cage.
\end{lemma}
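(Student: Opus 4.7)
The strategy exploits the product decomposition of $X=X_1\times X_2$: every halfspace of $X$ has the form $h\times X_2$ for a halfspace $h$ of $X_1$, or $X_1\times k$ for a halfspace $k$ of $X_2$; every ultrafilter factors accordingly as $\alpha=\alpha_1\times\alpha_2$; and convergence in $\mathcal{U}(X)$ is componentwise in this decomposition. I plan to prove the three conclusions in the order $(2)$, $(1)$, $(3)$. Fix a hyperplane $\hat h$ of $X_1$ dual to an edge with endpoints $v_1\in h$ and $v_2\in h^*$; since $X_2$ is cocompact and essential it is infinite, so $\partial X_2$ is non-empty, and I pick some $\alpha_2\in\partial X_2$ and a halfspace $k\in\alpha_2$, setting $\alpha=\alpha_{v_1}\times\alpha_2\in\partial X$.

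For $(2)$, consider the cage $K=\{h\times X_2,\, X_1\times k\}\subset\alpha$. Its positive carriers meet in $\mathcal{C}^+(h)\times\mathcal{C}^+(k)\neq\emptyset$, so $K$ is a cage, and its intersection $\bigcap_{l\in K}l=h\times k$ contains no hyperplane of $X$: every hyperplane of $X$ has the form $\hat h'\times X_2$ or $X_1\times\hat k'$, and neither fits inside $h\times k$, since proper halfspaces cannot equal the whole factor ($X_2\not\subseteq k$ in the first case, $X_1\not\subseteq h$ in the second). Hence $X$ does not have caged hyperplanes.

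For $(1)$, I use that vertex ultrafilters are isolated in a locally finite cube complex. The finite set $H_{v_1}$ of minimal halfspaces at $v_1$ in $X_1$ satisfies $\bigcap H_{v_1}=\{v_1\}$, because any other vertex is separated from $v_1$ by the first edge on a geodesic to it, so $\mathcal{U}_{H_{v_1}}=\{\alpha_{v_1}\}$ is an open singleton in $\mathcal{U}(X_1)$. A sequence $\beta_n=\beta_{1,n}\times\beta_{2,n}\in\mathcal{U}_{NT}(X)$ converging to $\alpha$ must therefore have $\beta_{1,n}=\alpha_{v_1}$ eventually. But then, for any $l\in H_{v_1}$, the halfspace $l\times X_2\in\beta_n$ has no strictly smaller halfspace in $\beta_n$ (any smaller one would be $l'\times X_2$ with $l'\subsetneq l$ and $l'\in\alpha_{v_1}$, contradicting the minimality of $l$; it cannot be $X_1\times k''$, since $X_1\not\subseteq l$), making $\beta_n$ terminating and contradicting $\beta_n\in\mathcal{U}_{NT}$. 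Hence $\alpha\notin B(X)$.

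For $(3)$, I propose the tight cage $(\mathcal{S},\mathcal{T})$ with $\mathcal{S}=\{h\times X_2\}$ and $\mathcal{T}=\{X_1\times k_1,\, X_1\times k_2\}$, where $\hat k_1,\hat k_2$ are incomparable hyperplanes of $X_2$ whose positive carriers share a vertex and whose joint positive carrier $\mathcal{C}^+(k_1)\cap\mathcal{C}^+(k_2)$ is unbounded. The core is then $h\times(\mathcal{C}^+(k_1)\cap\mathcal{C}^+(k_2))$, unbounded by construction. Condition $(2)$ holds because the product hyperplane $\hat h\times X_2$ meets every other hyperplane of $X$. Condition $(4)$ holds for $X_1$-direction hyperplanes by the same product-intersection principle, and for $X_2$-direction hyperplanes reduces to the statement that every hyperplane of $X_2$ dual to an edge adjacent to $\mathcal{C}^+(k_1)\cap\mathcal{C}^+(k_2)$ crosses both $\hat k_1$ and $\hat k_2$. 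The main obstacle is producing such a pair $(k_1,k_2)$ in $X_2$: cocompactness and essentiality give a rich supply of parallel translates of any hyperplane via the automorphism group, and a careful choice among these yields the required unbounded joint positive carrier with the local crossing property. Degenerate low-dimensional subcases, where $X_2$ is too ``thin'' to admit such a pair, will likely require a variant construction leveraging structure on the $X_1$ side.
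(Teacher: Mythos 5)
Your arguments for the first two conclusions are correct, and in fact your proof that $X$ does not have caged hyperplanes is a bit leaner than the paper's: you take the two-element cage $K=\{h\times X_2,\,X_1\times k\}$, whereas the paper uses the full set $\bar M$ of (lifts of) minimal halfspaces at a vertex of $X_1$. Your argument for $\partial X\neq B(X)$ is also sound and close in spirit to the paper's: both hinge on the fact that a limit of non-terminating ultrafilters whose $X_1$-component converges to a vertex ultrafilter $\alpha_{v_1}$ would have to \emph{be} $\alpha_{v_1}$ in that coordinate eventually, forcing a minimal halfspace and hence termination. (The paper phrases this without invoking the product factorization of ultrafilters, instead arguing directly with a geodesic and Lemma \ref{lemma_convexity2}, but the content is the same.)

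The gap is in conclusion (3). Your proposed tight cage $(\mathcal S,\mathcal T)$ with $\mathcal S=\{h\times X_2\}$ and $\mathcal T=\{X_1\times k_1,\,X_1\times k_2\}$ requires two hyperplanes $\hat k_1,\hat k_2$ of $X_2$ whose joint positive carrier $\mathcal{C}^+(k_1)\cap\mathcal{C}^+(k_2)$ is unbounded. This simply does not exist in general: if $X_2$ is a tree (which is cocompact and essential), positive carriers of hyperplanes are single vertices, so every such intersection is at most a point. You flag this yourself as ``degenerate low-dimensional subcases,'' but it is not a degenerate case to be patched later --- it is the obstruction to the whole strategy, and no ``careful choice among translates'' can overcome it. The paper avoids this entirely by building the tight cage on the $X_1$ side: take $\mathcal S=\emptyset$ and $\mathcal T=\bar M$, the lifts to $X$ of the minimal halfspaces of $\alpha_{v_1}$ in $X_1$. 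Then the core is $\bigcap_{m\in M}\mathcal{C}^+(m)\times X_2=\{v_1\}\times X_2$, which is unbounded precisely because $X_2$ is essential (hence infinite), and tight cage condition (4) holds because any hyperplane of the form $X_1\times\hat k'$ automatically crosses every $\hat m\times X_2$, while hyperplanes of the form $\hat m'\times X_2$ dual to an edge at $\{v_1\}\times X_2$ lie in $\hat{\mathcal T}$. You should rework (3) along these lines; the construction you already use for (1) --- the minimal halfspaces $H_{v_1}$ --- is exactly the right set $\mathcal T$.
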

\begin{proof}
	As $X$ is a product, the hyperplanes in $X$ form two disjoint sets, $\hat{H}_1$ and $\hat{H}_2$, and every hyperplane in $\hat{H}_1$ intersects every hyperplane in $\hat{H}_2$ \cite[Lemma 2.5]{CS}. As both $X_1$ and $X_2$ contain an edge, $\hat{H}_1$ and $\hat{H}_2$ are not empty. 
	
	Fix a vertex $v \in X_1$ and let $\alpha_1 = \alpha_v$ be the ultrafilter on $X_1$ whose halfspaces each contain $v$. The ultrafilter $\alpha_1$ satisfies the descending chain condition (DCC). Let $M$ be the set of minimal halfspaces in $\alpha_1$. The halfspaces in $M$ are exactly the halfspaces, $h$, in $X_1$ such that $v \in \mathcal{C}^+(h)$. In particular, the set $|M|$ is finite (as $X_1$ is locally finite) and is non-empty (as $X_1$ is connected and contains an edge).
	
	As $X_2$ is cocompact and essential, by \cite[Theorem 3.1]{NS} there exists a non-terminating ultrafilter $\alpha_2$ on $X_2$. It follows that $\alpha = \alpha_1 \times \alpha_2$ is an ultrafilter on $X$ that does not satisfy the DCC. In particular, $\alpha \in \partial X$. 
	
	Every halfspace, $h$, in $X_i$, $i = 1, 2$, can be naturally associated to a halfspace, $\pi_i^{-1}(h)$, in $X$ where $\pi_i: X \to X_i$ is the natural projection. Let $\bar{M} = \{\pi_1^{-1}(m) ~|~ m \in M\}$ be the set of halfspaces in $X$ corresponding to the halfspaces in $M$. Similarly, let $\bar{\alpha}_i = \{\pi_i^{-1}(h) ~|~ h \in \alpha_i \}$ for $i = 1, 2$.
	
	As every hyperplane corresponding to a halfspace in $\bar{\alpha}_1$ intersects every hyperplane corresponding to a halfspace in $\bar{\alpha_2}$, the halfspaces $\bar{M}$ are minimal in $\alpha$, and it is also straightforward to check that $\{\mathcal{S} = \emptyset, \mathcal{T} = \bar{M} \}$ is a tight cage in $X$. Furthermore, by Proposition \ref{prop_tight_cages_are_cages} $\bar{M}$ is a cage and $\bigcap_{m \in \bar{M}}m$ does not contain a hyperplane. Thus, as $\bar{M}$ is contained in the ultrafilter $\alpha \in \partial X$, $X$ does not have caged hyperplanes.
	
	We now prove that $\partial X \neq B(X)$ by showing that the ultrafilter $\alpha$, as defined above, cannot be the limit of a sequence of non-terminating ultrafilters, and consequently $\alpha \in \partial X \setminus B(X)$. 
	Assume, for a contradiction, that $\alpha$ is the limit of a sequence of non-terminating ultrafilters. By Lemma \ref{lemma_finite_subsets_in_limit}, there must exist a non-terminating ultrafilter $\beta$ that contains every halfspace in $\bar{M}$. 
	
	Fix a halfspace $\bar{m} \in \bar{M}$. As $\beta$ is non-terminating, there exists a halfspace $\bar{k} \in \beta$ such that $\bar{k} \subset \bar{m}$. In particular, $k = \pi_1(\bar{k})$ is a halfspace in $X_1$. Let $m = \pi_1(\bar{m})$. It follows that $v \in \mathcal{C}^+(m)$ and $v \in k^{*}$ (recall $v$ is the vertex such that $\alpha_1 = \alpha_v$). 
	
	If $v \in \mathcal{C}^+(k^{*})$, then $k^* \in M$. However, as $\bar{k} \in \beta$ and $\bar{k}^* \notin \beta$, $\beta$ does not contain every halfspace in $\bar{M}$. This contradicts our choice of $\beta$. 
	
	On the other hand, suppose $v \notin \mathcal{C}^+(k^{*})$. Let $\gamma$ be a geodesic from $\mathcal{C}^+(k^*)$ to $v$. Let $\hat{z}$ be the hyperplane that intersects the last edge of $\gamma$. By Lemma \ref{lemma_convexity2}, $\hat{z}$ does not intersect $\hat{k}$. Let $z$ be the choice of halfspace for $\hat{z}$ such that $v \in z$. It follows that $z \in M$. However, by the consistency condition, $\bar{k}^* \in \beta$. Again, this is a contradicts the choice condition on ultrafilters as we also have that $\bar{k} \in \beta$. Thus, $\alpha$ cannot be the limit of a sequence of non-terminating ultrafilters.
\end{proof}

We consider next the Euclidean case.

\begin{lemma} \label{lemma_euclidean_case}
	Let $X$ be an essential, cocompact, locally finite, Euclidean CAT(0) cube complex of Euclidean dimension $n$, then:
	\begin{enumerate}
		\item If $n = 1$, $\partial X = B(X)$, $X$ has caged hyperplanes and $X$ does not contain a tight cage.
		\item If $n > 1$, then $X$ is the product of two infinite, essential cube complexes.
	\end{enumerate}
\end{lemma}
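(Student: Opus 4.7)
The plan is to handle the two cases separately. The key observation is that the $n=1$ case actually forces $X$ to be a line, from which the three claims become essentially trivial; the $n>1$ case uses the invariant flat to manufacture a product decomposition.

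For part (2), suppose $n \geq 2$. The $\text{Aut}(X)$--invariant flat $F$ of dimension $n$ has at least two coordinate directions, giving rise to $\text{Aut}(X)$--equivariantly permuted classes of parallel hyperplanes of $X$. Pick one such class $\mathcal{H}_1$ and let $\mathcal{H}_2$ denote the remaining hyperplanes. Every hyperplane in $\mathcal{H}_1$ crosses every hyperplane in $\mathcal{H}_2$ (this is already forced within $F$ itself and extends throughout $X$ via the standard parallelism-class product decomposition machinery), yielding a splitting $X = X_1 \times X_2$. Both factors are infinite, each containing infinitely many parallel hyperplanes drawn from $F$, and each inherits essentiality from $X$.

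For part (1), $n = 1$: the key intermediate step is to show that $X$ must be the line $L$ itself. If $X$ is irreducible, rank rigidity applied to the $\text{Aut}(X)$--invariant axis $L$ forces $X = L$ directly. If instead $X = X_1 \times X_2$ is a non-trivial product with both factors containing an edge, then any $\text{Aut}(X)$--invariant line projects to subsets of each factor: if its projection surjects onto $X_2$, then $X_2$ is itself a line and $X$ has Euclidean dimension at least $2$, contradicting $n=1$; otherwise the projection is a proper $\text{Aut}(X_2)$--invariant subset, forced by cocompactness of the induced $\text{Aut}(X_2)$--action to consist of a single fixed vertex. Combined with essentiality and local finiteness, a fixed vertex forces $X_2$ to collapse to a point, another contradiction. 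Hence $X = L$.

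With $X = L$ established: the Roller boundary consists of exactly two ultrafilters, one for each end of the line, and both are non-terminating since every halfspace properly contains a smaller halfspace on the same side; hence $\partial L = \mathcal{U}_{NT} = B(L)$. For caged hyperplanes, observe that in $L$ any two halfspaces pointing the same direction are comparable; so every cage contained in a boundary ultrafilter is a singleton $\{h\}$, and $h$ contains infinitely many hyperplanes. For the non-existence of tight cages, note first that $\mathcal{S} = \emptyset$ since no two hyperplanes in $L$ cross; the condition $|\mathcal{T}| \geq 2$ then demands two halfspaces with intersecting positive carriers, which in a line are forced to be the two halfspaces adjacent to a common vertex $v$ pointing outward, making the core contained in $\{v\}$ and hence bounded --- violating tight cage condition (3). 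The main obstacle is the reduction to $X = L$ in part (1), since rank rigidity is commonly formulated without $\text{Aut}(X)$--fixed boundary points; a careful argument combining irreducibility with the exact hypothesis $n=1$ is needed to pin the cube complex down to a single line.
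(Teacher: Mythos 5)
Your part~(2) is broadly the same strategy as the paper (use the invariant flat $F$ to manufacture a product), though you argue via parallelism classes of hyperplanes rather than citing the Caprace--Sageev rank rigidity corollary. That route can work, but you omit a step the paper supplies: one must first prove that \emph{every} hyperplane of $X$ meets $F$ (the paper deduces this from essentiality and the bounded-neighborhood property). Without it, your class $\mathcal{H}_2$ could contain hyperplanes disjoint from $F$ that fail to cross all of $\mathcal{H}_1$, and the product decomposition criterion would not apply.

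Part~(1) has a genuine gap: the claim that Euclidean dimension~$1$ forces $X$ to equal the line $L$ is false, and all of your subsequent argument rests on it. Consider the infinite chain of unit squares obtained by gluing the corner $(1,1)$ of each square to the corner $(0,0)$ of the next. This is a locally finite, $2$--dimensional, simply connected cube complex whose vertex links are flag; it is cocompact (a translation shifts the chain), it is essential (each hyperplane is a single midcube and both of its halfspaces extend to infinity along the chain), and the concatenated diagonals through the degree-$4$ corner vertices form an $\operatorname{Aut}(X)$--invariant geodesic line, while no $2$--flat exists; so this $X$ has Euclidean dimension exactly~$1$ yet $X \neq L$. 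This also refutes your claim that ``rank rigidity applied to the invariant axis $L$ forces $X=L$'': rank rigidity only gives the dichotomy reducible-or-rank-one, and this $X$ is irreducible with a rank-one translation but is not a line. The paper avoids the reduction to $X=L$ entirely; it proves the weaker but sufficient fact that every hyperplane of such an $X$ has \emph{finite diameter} (a Ramsey-style counting argument using that $X$ lies in a bounded neighborhood of $F$ and that all hyperplanes cross $F$), and then shows directly that no boundary ultrafilter has a minimal halfspace (so $\partial X = \mathcal{U}_{NT} \subseteq B(X)$), that any intersection of positive carriers is bounded (so there is no tight cage), and that along a deep chain in a boundary ultrafilter the halfspaces eventually sit inside every cage (giving caged hyperplanes).
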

\begin{proof}
	Let $F$ be an $\text{Aut}(X)$ invariant $n$-dimensional flat. As $X$ is cocompact and $F$ is $\text{Aut}(X)$ invariant, there is an $R>0$ such that $X$ is contained in the $R$ neighborhood of $F$. It also follows that every hyperplane intersects $F$. For if some hyperplane $\hat{h}$ did not intersect $F$, as $X$ is essential and $\hat{h}$ separates $X$, this would mean there are points in $X$ arbitrarily far from $F$. 
	
	We first suppose that $F$ is of dimension $n \ge 2$. As $X$ is contained in a bounded neighborhood of a flat of dimension $n$, it follows that $X$ does not contain any contracting isometries. By a version of the rank rigidity theorem for cube complexes \cite[Corollary 6.4]{CS}, $X$ is the product of two unbounded cube complexes. Moreover, as $X$ is essential, so are the factors in this product.
	
	Suppose now that $F$ is $1$-dimensional. In other words, $F$ is an $\text{Aut}(X)$ invariant CAT(0) geodesic line.

	We first claim that no hyperplane in $X$ has infinite diameter. For suppose some hyperplane $\hat{k}$ did have infinite diameter. 
	Let $m$ be an even integer larger than $4R$. As $X$ is essential and finite dimensional, it readily follows from Ramsey's Theorem (see \cite[Lemma 2.1]{CS}), that there exist halfspaces $k_1 \supset k_2\dots\supset k_{2m}$ of $X$ and corresponding hyperplanes $\hat{k}_1, \dots, \hat{k}_{2m}$ such that $\hat{k}= \hat{k}_m$.
 	Note that $\hat{k}$ does not intersect the $R$ neighborhood of $F \cap k_1^*$, as at least $R$ hyperplanes separate $\hat{k}_1$ from $\hat{k}$. Similarly, $\hat{k}$ does not intersect the $R$ neighborhood of $F \cap k_{2m}$. By the first paragraph, every hyperplane is in the $R$ neighborhood of $F$. Thus, $\hat{k}$ is contained in an $R$ neighborhood of the line segment of $F$ between $F \cap \hat{k}_1$ and  $F \cap \hat{k}_{2m}$ (recall that every hyperplane intersects $F$ by the first paragraph). However, this is a contradiction as $X$ is locally finite and $\hat{k}$ is infinite. Hence, every hyperplane in $X$ must have finite diameter. 
	
	Now consider an ultrafilter $\alpha \in \partial X$. We claim that $\alpha$ does not contain a minimal halfspace. For suppose otherwise that $h$ is a minimal halfspace in $\alpha$. As $\alpha \in \partial X$, there exists an infinite chain of halfspaces, $l_1 \supset l_2 \supset l_3 \supset \dots$ in $\alpha$. By Lemma \ref{lemma_intersects_inf_many} and since $h$ is minimal, $\hat{h}$  intersects $\hat{l}_i$ for infinitely many $i$. However, $X$ is locally finite and every hyperplane in $X$ has finite diameter, so this is not possible.
	
	It follows that every ultrafilter in $\partial X$ is non-terminating and that $\partial X = B(X)$. Furthermore, given any finite set of halfspaces $H$ in $X$, $Y = \bigcap_{h \in H} \mathcal{C}^+(h)$ is either empty or bounded (since hyperplanes are finite diameter). Thus, there cannot be a tight cage in $X$ as there cannot be a corresponding unbounded core $Y$.
	
	We next show $X$ has caged hyperplanes. For let $K$ be a cage in some ultrafilter $\alpha \in \partial X$. As $\alpha \in \partial X$, there exists an infinite chain of hyperplanes, $l_1 \supset l_2 \supset l_3 \dots$ in $\alpha$. By applying Lemma \ref{lemma_intersects_inf_many} and by possibly passing to a subsequence, we may assume for each $k \in K$, either $\hat{l}_i \cap \hat{k} \neq \emptyset$ or $l_i \subset k$. However, as $K$ is finite and hyperplanes in $X$ have finite diameter, for large enough $N$, $\hat{l}_N \subset k$ for every $k \in K$. Thus, as $K$ was an arbitrary cage, $X$ has caged hyperplanes.
\end{proof}

The proposition below summarizes the results in this section and immediately follows from Lemma \ref{lemma_reducible_case} and Lemma \ref{lemma_euclidean_case}.

\begin{proposition} \label{prop_red_euc_summary}
	Suppose $X$ is an essential, locally finite, cocompact CAT(0) cube complex. Furthermore, suppose either $X$ is reducible or $X$ is Euclidean. Then the following are equivalent:
	\begin{enumerate}
		\item $\partial X = B(X)$
		\item $X$ has caged hyperplanes
		\item $X$ is Euclidean with Euclidean dimension $1$
		\item $X$ does not contain a tight cage
	\end{enumerate}
\end{proposition}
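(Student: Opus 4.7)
The plan is to deduce the equivalence by combining Lemma \ref{lemma_reducible_case} and Lemma \ref{lemma_euclidean_case}, splitting into cases according to whether $X$ has Euclidean dimension one. First, I would dispatch the direction assuming condition (3): if $X$ is Euclidean of Euclidean dimension one, then Lemma \ref{lemma_euclidean_case}(1) immediately supplies conditions (1), (2), and (4), so all four conditions hold simultaneously in this case.

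Next, I would show that if condition (3) fails then, under the hypothesis that $X$ is either reducible or Euclidean, $X$ decomposes as a nontrivial product $X = X_1 \times X_2$ to which Lemma \ref{lemma_reducible_case} applies. There are two sub-cases. If $X$ is Euclidean of dimension $n > 1$, then Lemma \ref{lemma_euclidean_case}(2) directly produces such a product decomposition with both factors infinite and essential. Otherwise $X$ is reducible (but not Euclidean of dimension one), so one simply writes $X = X_1 \times X_2$ with neither factor a single point. In either sub-case, each factor contains an edge since each is unbounded. I would then briefly verify the remaining hypotheses of Lemma \ref{lemma_reducible_case}: essentialness of the factors follows from essentialness of $X$, since every halfspace of $X_i$ lifts to a halfspace of $X$ via $\pi_i^{-1}$ and deepness transfers directly; local finiteness is inherited by factors of a locally finite product; and cocompactness of each factor follows from cocompactness of $X$ together with the standard product decomposition of the isometry group of a reducible CAT(0) cube complex. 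Applying Lemma \ref{lemma_reducible_case} then yields the negations of (1), (2), and (4), while (3) fails by assumption, so all four conditions fail simultaneously in this case.

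Putting the two cases together yields the desired equivalence. The main obstacle, though minor, is the verification that the factors $X_1$ and $X_2$ in the reducible sub-case themselves satisfy the cocompact and essential hypotheses required by Lemma \ref{lemma_reducible_case}; this is standard but worth a short paragraph invoking the product structure of halfspaces in $X_1 \times X_2$ and the corresponding product structure of $\mathrm{Aut}(X)$. Once that is recorded, the proposition follows as a direct bookkeeping assembly of the two previous lemmas with no further geometric input.
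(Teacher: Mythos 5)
Your proposal is correct and follows the same route the paper intends: the paper states that Proposition \ref{prop_red_euc_summary} ``immediately follows'' from Lemma \ref{lemma_reducible_case} and Lemma \ref{lemma_euclidean_case}, which is precisely the case split you carry out (Euclidean dimension one via Lemma \ref{lemma_euclidean_case}(1); otherwise a nontrivial product, via Lemma \ref{lemma_euclidean_case}(2) in the Euclidean case or directly in the reducible case, fed into Lemma \ref{lemma_reducible_case}). You in fact supply more detail than the paper, in particular the verification that essentialness, local finiteness, and cocompactness pass to the factors of the product so that Lemma \ref{lemma_reducible_case}'s hypotheses are met.
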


\section{Characterizing when $\partial X = B(X)$} \label{sec_characterization}
Most of the work for the proof of Theorem \ref{thm_char} has been done in the previous sections. In this section, we piece together those results. 

The following lemma follows from results in \cite{NS} and gives one of the implications in Theorem \ref{thm_char} for the non-Euclidean, irreducible case.

\begin{lemma} \label{lemma_implies_caged}
	Let $X$ be a cocompact, essential, irreducible, locally finite, non-Euclidean CAT(0) cube complex. If $\partial X = B(X)$, then every open set $U \subset \mathcal{U}$ containing an element $\alpha \in \partial X$, contains a hyperplane.
\end{lemma}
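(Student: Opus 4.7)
The plan is to apply Proposition \ref{lemma_contains_cage_nbhd} to reduce the statement to the following concrete claim: for every cage $K$ contained in an ultrafilter $\alpha \in \partial X$, the basic open neighborhood $\mathcal{U}_K$ itself contains a hyperplane. Indeed, Proposition \ref{lemma_contains_cage_nbhd} furnishes a cage $K \subset \alpha$ with $\alpha \in \mathcal{U}_K \subset U$, so any hyperplane contained in $\mathcal{U}_K$ automatically lies in $U$. Since $\alpha \in B(X)$ and $\mathcal{U}_K$ is an open neighborhood of $\alpha$, the intersection $\mathcal{U}_K \cap \mathcal{U}_{NT}$ is non-empty; I fix a non-terminating ultrafilter $\beta \in \mathcal{U}_K$, so in particular $K \subset \beta$.

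The next step is to exploit the non-terminating property of $\beta$ to construct an infinite descending chain $l_1 \supsetneq l_2 \supsetneq \cdots$ of halfspaces in $\beta$, and then to apply Lemma \ref{lemma_intersects_inf_many} with respect to each $k \in K$. For each such $k$, the lemma yields the dichotomy: either (i) $\hat{l}_n \cap \hat{k} \neq \emptyset$ for all large $n$, or (ii) $l_n \subsetneq k$ strictly for all large $n$; in case (ii) one automatically has $\hat{l}_n \cap \hat{k} = \emptyset$ and therefore $\mathcal{C}(\hat{l}_n) \subset k$. If (ii) occurs simultaneously for every $k \in K$, then for $n$ sufficiently large $\hat{l}_n$ is a hyperplane contained in $\mathcal{U}_K$, finishing this case.

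The main obstacle is the other alternative, where for some $k \in K$ case (i) is forced regardless of how the chain is chosen. Here I would invoke the non-Euclidean, irreducible, cocompact hypothesis through \cite[Lemma 5.5]{NS}, the same ingredient used in the proof of Lemma \ref{lemma_straight_link_sectors} to produce hyperplanes inside size-two sectors. The plan is to combine this lemma with the non-terminating property of $\beta$ to trade the offending halfspace $k$ for a deeper halfspace lying in a sector that already contains a hyperplane, thereby replacing $K$ by a cage of strictly smaller complexity (measured either by $|K|$ or by the function $\Theta$ from the proof of Proposition \ref{prop_tight cage}). An induction on this complexity then reduces to the easy case handled above, with the base of the induction being sectors (cages of complexity zero), where \cite[Lemma 5.5]{NS} produces the required hyperplane directly. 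Together, these cases yield a hyperplane inside $\mathcal{U}_K$, and hence inside $U$.
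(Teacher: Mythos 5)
The paper's own proof is a one-line citation of \cite[Corollary~5.7]{NS}, which (given the essential hypothesis) directly yields that every open neighborhood of a $B(X)$ point contains a hyperplane. Your proposal instead tries to re-derive this from scratch, and the first half is fine: the reduction via Proposition~\ref{lemma_contains_cage_nbhd} to showing every $\mathcal{U}_K$ (with $K$ a cage) contains a hyperplane is correct, the existence of a non-terminating $\beta \in \mathcal{U}_K$ is correct, and the ``easy case'' argument --- that if a descending chain $l_1 \supsetneq l_2 \supsetneq\cdots$ in $\beta$ eventually satisfies $l_n \subsetneq k$ for every $k \in K$, then $\hat l_n$ lies in $\mathcal{U}_K$ --- is sound.

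The gap is the entire ``hard case.'' You never actually carry out the complexity-reduction argument; the language (``The plan is\dots'', ``I would invoke\dots'', ``An induction\dots then reduces\dots'') signals that you are sketching a hope rather than a proof. There are also concrete obstructions you do not address. First, \cite[Lemma~5.5]{NS} requires the two hyperplanes $\hat s_1, \hat s_2$ to \emph{intersect} and $\hat s_2$ to be \emph{deep} in $s_1$; but a cage $K$ need not consist of pairwise intersecting hyperplanes (only the positive carriers share a vertex), so it is not a priori applicable to an arbitrary pair in $K$. Second, even for the sector base case, the paper's Lemma~\ref{lemma_straight_link_sectors} (which establishes that every sector contains a hyperplane) uses the \emph{straight links} hypothesis essentially --- both for the $n=2$ deepness input to \cite[Lemma~5.5]{NS} and in the inductive step --- and straight links is not available in Lemma~\ref{lemma_implies_caged}. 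You would need to show that the non-terminating hypothesis can be substituted for straight links throughout, which is plausible (for the $n=2$ case one can extract deepness from the chain $l_n$ intersecting $\hat k$ arbitrarily far from $\hat k_0$), but is not done. Third, the claimed decrease of the complexity $|K|$ or $\Theta(K)$ after ``trading'' a halfspace is asserted, not verified. So as written, the proof does not close. Either supply the missing induction in full (essentially re-proving \cite[Corollary~5.7]{NS}), or, as the paper does, invoke that corollary directly.
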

\begin{proof}
	As $\partial X = B(X)$, $\alpha$ is an ultrafilter in $B(X)$. By \cite[Corollary 5.7]{NS} and as $X$ is essential, $U$ contains a hyperplane.
\end{proof}

\begin{theorem} \label{thm_char}
	Let $X$ be an essential, locally finite, cocompact, CAT(0) cube complex. The following are equivalent:
	\begin{enumerate}
		\item \label{thm_char_bdry} $\partial X = B(X)$
		\item \label{thm_char_caged_hyps} $X$ has caged hyperplanes.
		\item \label{thm_char_open_sets} Every open set in $\mathcal{U}(X)$ which contains an ultrafilter in $\partial X$, contains a hyperplane. 
	\end{enumerate}
	Additionally, if $X$ has straight links then the above conditions are equivalent to:
	\begin{enumerate}
		 \setcounter{enumi}{3}
		\item \label{thm_char_no_tight_cage} $X$ does not contain a tight cage.
	\end{enumerate}
\end{theorem}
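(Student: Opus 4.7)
The plan is to assemble the equivalences by combining the propositions already proven in Sections \ref{sec_caged_hyps}--\ref{sec_euc_red} and then splitting into cases. The implications $(2) \Leftrightarrow (3)$ and $(2) \Rightarrow (1)$ are immediate: the former is exactly Proposition \ref{prop_caged_equiv}, and the latter is exactly Proposition \ref{prop_caged_implies_bx}. So the only nontrivial remaining implication in the first part of the theorem is $(1) \Rightarrow (2)$ (equivalently $(1) \Rightarrow (3)$), and the addition of $(4)$ under the straight links hypothesis needs to be tied in.

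For $(1) \Rightarrow (3)$, I would split on the structure of $X$. \textbf{Case A:} $X$ is irreducible and non-Euclidean. Here I apply Lemma \ref{lemma_implies_caged} directly: assuming $\partial X = B(X)$, every open $U \subset \mathcal{U}$ meeting $\partial X$ contains a hyperplane. This is $(3)$, and by Proposition \ref{prop_caged_equiv} it gives $(2)$. \textbf{Case B:} $X$ is reducible or Euclidean. Here Proposition \ref{prop_red_euc_summary} directly gives $(1) \Leftrightarrow (2)$, and then $(3)$ follows from Proposition \ref{prop_caged_equiv}. This completes the equivalence of $(1)$, $(2)$, $(3)$ in all cases.

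For the straight links addendum, I again split into the same two cases. \textbf{Case A (irreducible, non-Euclidean, with straight links):} Proposition \ref{prop_tight cage} states that under exactly these hypotheses, $X$ contains a tight cage if and only if $X$ does not have caged hyperplanes, i.e.\ $(4) \Leftrightarrow (2)$. \textbf{Case B (reducible or Euclidean):} Proposition \ref{prop_red_euc_summary} already records $(2) \Leftrightarrow (4)$. Combining the two cases, $(4)$ is equivalent to $(2)$ whenever $X$ has straight links, and the chain of equivalences closes up.

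The main obstacle has already been absorbed into the preparatory work: the hard direction, showing that failure of caged hyperplanes in the irreducible non-Euclidean straight links setting produces a tight cage, is Proposition \ref{prop_tight cage}, and the hard direction of $(1) \Rightarrow (3)$ in that setting rests on the Nevo--Sageev result \cite[Corollary 5.7]{NS} invoked in Lemma \ref{lemma_implies_caged}. So the proof here is a clean case analysis; the only subtlety to watch for is that Lemma \ref{lemma_implies_caged} and Proposition \ref{prop_tight cage} both genuinely require the irreducible non-Euclidean hypothesis, which is why the Euclidean/reducible case must be peeled off and handled separately via Proposition \ref{prop_red_euc_summary}.
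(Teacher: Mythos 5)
Your proposal is correct and is essentially identical to the paper's own proof: both first dispatch $(2)\Leftrightarrow(3)$ via Proposition \ref{prop_caged_equiv} and $(2)\Rightarrow(1)$ via Proposition \ref{prop_caged_implies_bx} unconditionally, then split on whether $X$ is Euclidean/reducible (handled wholesale by Proposition \ref{prop_red_euc_summary}) or irreducible and non-Euclidean (handled by Lemma \ref{lemma_implies_caged} and Proposition \ref{prop_tight cage}). Your rendering is merely slightly more explicit about where Proposition \ref{prop_caged_equiv} is needed to recover condition $(3)$ in the Euclidean/reducible case.
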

\begin{proof}
	The equivalence of conditions \ref{thm_char_open_sets} and \ref{thm_char_caged_hyps} follows from Proposition \ref{prop_caged_equiv}. Conditions \ref{thm_char_bdry}--\ref{thm_char_no_tight_cage} are all equivalent in the Euclidean and reducible case by Proposition \ref{prop_red_euc_summary}. We now assume that $X$ is non-Euclidean and irreducible. 
	
	By Proposition \ref{prop_caged_implies_bx}, condition \ref{thm_char_caged_hyps} implies \ref{thm_char_bdry}. By Lemma \ref{lemma_implies_caged}, condition \ref{thm_char_bdry} implies \ref{thm_char_open_sets}. Finally, if $X$ has straight links then conditions \ref{thm_char_caged_hyps} and \ref{thm_char_no_tight_cage} are equivalent by Proposition \ref{prop_tight cage}.
\end{proof}

The following corollary immediately follows by combining the above theorem and Lemma \ref{prop_core_links}. The corollary gives a condition that is often easy to check in practice. This condition, for instance, is utilized in the examples of Section \ref{sec_applications}.

\begin{corollary} \label{cor_convex_join_subset}
	Let $X$ be a cocompact, locally finite CAT(0) cube complex with straight links. If $\partial X \neq B(X)$, then $X$ contains an unbounded convex subset $Y$ such that the link, taken in $X$, of every vertex in $Y$ is a join.
\end{corollary}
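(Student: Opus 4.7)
The plan is to chain together Theorem \ref{thm_char} and Proposition \ref{prop_core_links}, after observing that the hypotheses of the corollary are enough to invoke Theorem \ref{thm_char}. First I would note that since $X$ has straight links, Lemma \ref{lemma_straight_links_implies_essential} gives that $X$ is essential, so together with the cocompactness and local finiteness assumptions, all hypotheses of Theorem \ref{thm_char} are satisfied.

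Next I would use the straight-links branch of Theorem \ref{thm_char}, namely the equivalence of condition \ref{thm_char_bdry} with condition \ref{thm_char_no_tight_cage}. From $\partial X \neq B(X)$ this yields a tight cage $(\mathcal{S}, \mathcal{T})$ in $X$. Let $Y = \bigcap_{t \in \mathcal{T}} \mathcal{C}^+(t) \cap \bigcap_{s \in \mathcal{S}} s$ be the core of this tight cage. By tight cage condition \ref{def_tight cage_unbounded_core}, $Y$ is unbounded, and by the remark following Definition \ref{def_tight_cage}, $Y$ is convex (being the intersection of convex subcomplexes, namely positive carriers and halfspaces).

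Finally, applying Proposition \ref{prop_core_links} directly to this tight cage $(\mathcal{S}, \mathcal{T})$ gives that the link, taken in $X$, of every vertex in $Y$ is a join, completing the proof.

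There isn't really a main obstacle here, since all the substantive content has already been packaged in Theorem \ref{thm_char} and Proposition \ref{prop_core_links}; the corollary is just the composition of the two. The only thing to be a little careful about is that the corollary does not list essentiality or irreducibility among its hypotheses, so I would explicitly flag the use of Lemma \ref{lemma_straight_links_implies_essential} to obtain essentiality, while irreducibility and non-Euclideanness are not needed for the implication direction we are using in Theorem \ref{thm_char}.
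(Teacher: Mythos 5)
Your proof is correct and matches the paper's approach exactly: the paper itself states the corollary ``immediately follows'' by combining Theorem \ref{thm_char} with Proposition \ref{prop_core_links}, which is precisely your chain. You also correctly identify and resolve the one small gap that the paper glosses over, namely that Theorem \ref{thm_char} requires essentiality while the corollary does not state it, which is supplied by Lemma \ref{lemma_straight_links_implies_essential}; the remark about irreducibility and non-Euclideanness is harmless but unnecessary, since Theorem \ref{thm_char} never assumes them.
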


\section{Applications} \label{sec_applications}

Let $X$ be a locally finite, cocompact CAT(0) cube complex. By the above work, it turns out a good strategy to show that $\partial X = B(X)$ is to show that $X$ has straight links and does not contain an unbounded convex subcomplex whose vertices each have joins as their link (where the link is taken in $X$). This strategy is often straightforward to implement, and it is readily seen that many well studied irreducible CAT(0) cube complexes have Roller boundary equal to $B(X)$. We illustrate this approach in the case $X$ is the usual CAT(0) cube complex associated to a right-angled Coxeter group or right-angled Artin group.

\subsection{Right-angled Coxeter groups}

Given a simplicial graph $\Gamma$ with vertex set $S = \{s_1, s_2, ..., s_n\}$ and edge set $E$, the corresponding right-angled Coxeter group is given by the presentation:
\[W_{\Gamma} = \langle S ~| ~ s_i^2 = 1 \text{ for } 1 \le i \le n, s_is_j = s_js_i \text{ for } (s_i, s_j) \in E  \rangle \]
Every right-angled Coxeter group, $W_{\Gamma}$, acts geometrically on a CAT(0) cube complex, $\Sigma_\Gamma$, whose edges are labeled by vertices of $\Gamma$. This complex is known as the Davis complex. The $1$--skeleton of the link of every vertex in $\Sigma_{\Gamma}$ is isomorphic to $\Gamma$. Moreover, the labels of edges dual to a given hyperplane of $\Sigma_\Gamma$ are all the same vertex of $\Gamma$, and this vertex is called \textit{type} of the given hyperplane. If two hyperplanes of $\Sigma_\Gamma$ intersect, then their types are adjacent vertices of $\Gamma$. We refer the reader to \cite{Dav} or \cite{Dani} for a background on right-angled Coxeter groups.

In order to apply Corollary \ref{cor_convex_join_subset}, we first characterize when the Davis complex, $\Sigma_{\Gamma}$, has straight links and is essential. We note that the equivalence of (\ref{prop_racg_straight_links1}) and (\ref{prop_racg_straight_links3}) below is also proved in the author's thesis.

\begin{proposition} \label{prop_racg_straight_links}
	Let $W_{\Gamma}$ be a right-angled Coxeter group and $X = \Sigma_{\Gamma}$ be its Davis complex. Then the following are equivalent:
	\begin{enumerate}
		\item \label{prop_racg_straight_links1} The graph $\Gamma^c$ (the complement graph) does not have an isolated vertex.
		\item \label{prop_racg_straight_links2} $X$ has straight links.
		\item \label{prop_racg_straight_links3} $X$ is essential.
	\end{enumerate}
\end{proposition}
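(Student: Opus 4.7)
The plan is to establish the implications $(1) \Leftrightarrow (2)$, $(2) \Rightarrow (3)$, and $(3) \Rightarrow (1)$.

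The equivalence $(1) \Leftrightarrow (2)$ is a direct translation between combinatorial properties of $\Gamma$ and intersection patterns of hyperplanes at vertices of $X$. At each vertex $v$ of $\Sigma_\Gamma$ there is exactly one edge of each type $s \in V(\Gamma)$, and two hyperplanes of $\Sigma_\Gamma$ can intersect only if their types span an edge of $\Gamma$; moreover, when $(s,t) \in E(\Gamma)$ the edges of types $s$ and $t$ at $v$ span a square, so the two associated hyperplanes do intersect. Therefore the straight links condition at a vertex $v$ applied to the edge of type $s$ reduces to the existence of some $t \in V(\Gamma) \setminus \{s\}$ with $(s,t) \notin E(\Gamma)$, which is exactly the statement that $s$ is not isolated in $\Gamma^c$.

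The implication $(2) \Rightarrow (3)$ is immediate from Lemma \ref{lemma_straight_links_implies_essential}.

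For $(3) \Rightarrow (1)$ I would argue the contrapositive. If $s$ is an isolated vertex of $\Gamma^c$, then $s$ is $\Gamma$-adjacent to every other generator, hence $s$ commutes with every other generator of $W_\Gamma$. This yields a direct product decomposition $W_\Gamma \cong \langle s \rangle \times W_{\Gamma \setminus \{s\}}$, and correspondingly the Davis complex splits as $X \cong [0,1] \times \Sigma_{\Gamma \setminus \{s\}}$, the first factor being a single edge. There is then a unique hyperplane $\hat{h}$ of type $s$, namely $\{1/2\} \times \Sigma_{\Gamma \setminus \{s\}}$, and every other hyperplane of $X$ has the form $[0,1] \times \hat{k}$ and therefore crosses $\hat{h}$. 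Consequently no hyperplane separates any vertex of $X$ from $\hat{h}$, so every vertex in each halfspace bounded by $\hat{h}$ lies at distance $1$ from $\hat{h}$. This shows $X$ is not essential. I do not anticipate any serious obstacle; the main conceptual content is recognizing the forced product decomposition in $(3) \Rightarrow (1)$, after which the bound on the diameter of each halfspace is transparent.
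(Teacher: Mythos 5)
Your proof is correct, and it diverges from the paper in the implication $(3) \Rightarrow (1)$. The paper argues directly: given that $X$ is essential, pick the edge $e$ of type $s$ at the identity vertex, use essentiality to produce a vertex $u$ at distance $1$ from $\mathcal{C}^+(h)$, and observe via Lemma \ref{lemma_convexity2} that the type of the edge from $\mathcal{C}^+(h)$ to $u$ is a vertex of $\Gamma$ not adjacent to $s$, so $s$ is not isolated in $\Gamma^c$. You instead take the contrapositive and exploit the product decomposition $W_\Gamma \cong \langle s \rangle \times W_{\Gamma\setminus\{s\}}$, hence $\Sigma_\Gamma \cong [0,1]\times\Sigma_{\Gamma\setminus\{s\}}$, noting that every other hyperplane crosses the unique type-$s$ hyperplane, so both of its halfspaces lie inside its carrier and $X$ fails to be essential. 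Both arguments are sound; the paper's is a more local, hyperplane-level argument that mirrors its proof of $(1)\Rightarrow(2)$, while yours is a more global, structural argument whose virtue is that it makes the obstruction (a trivial edge factor) completely explicit and is arguably the more transparent way to see \emph{why} essentiality fails. A minor additional difference: you prove $(1)\Leftrightarrow(2)$ directly in both directions, whereas the paper proves only $(1)\Rightarrow(2)$ and obtains the converse by closing the cycle through $(3)$; your version makes $(1)\Leftrightarrow(2)$ visibly a pure link-of-a-vertex translation, which is a small clarity gain at no extra cost.
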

\begin{proof}
	The implication (\ref{prop_racg_straight_links2}) $\to$ (\ref{prop_racg_straight_links3}) follows from Lemma \ref{lemma_straight_links_implies_essential}. 
	
	We now suppose that $\Gamma^c$ does not have an isolated vertex and show that $X$ has straight links.
	Let $v$ be a vertex and $e$ an edge in $X$. Let $s \in \Gamma$ be the label of $e$. As $\Gamma^c$ does not have isolated vertices, there exists a vertex $t \in \Gamma$ that is not adjacent to $s$. Furthermore, there exists an edge, $f$, adjacent to $v$ in $X$ labeled by $t$. As $s$ and $t$ are not adjacent in $\Gamma$, the hyperplane dual to $e$ does not intersect the hyperplane dual to $f$. Thus, $X$ has straight links.
	
	Finally, we suppose that $X$ is essential and show that $\Gamma^c$ does not have isolated vertices. Let $s$ be any vertex of $\Gamma$, and let $e$ be the edge of $X$ that is adjacent to the identity vertex and which is labeled by $s$. Let $\hat{h}$ be the hyperplane dual to $e$, and let $h$ be the corresponding halfspace which contains the identity vertex. As $X$ is essential, there exists a vertex $u \in h$ that is distance $1$ from $\mathcal{C}^+(h)$. In particular, the hyperplane dual to the edge $f$ between $u$ and $\mathcal{C}^+(h)$ does not intersect $\hat{h}$. Thus, the label of $f$ is not adjacent to $s$ in $\Gamma$ (or else we would have that $f \subset \mathcal{C}^+(h)$). Consequently, $v$ is not isolated in $\Gamma^c$.
\end{proof}

\begin{theorem} \label{thm_racg_app}
	Let $W_{\Gamma}$ be an infinite right-angled Coxeter group, and let $X = \Sigma_{\Gamma}$ be the corresponding Davis complex. Then $\partial X = B(X)$ if and only if $\Gamma$ is not a join.
\end{theorem}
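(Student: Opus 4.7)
The plan is to combine Corollary \ref{cor_convex_join_subset}, Lemma \ref{lemma_reducible_case}, and the characterization of straight links for Davis complexes given by Proposition \ref{prop_racg_straight_links}; the argument reduces to verifying that ``$\Gamma$ is a join'' corresponds exactly to ``$\Sigma_\Gamma$ splits as a useful product'' while ``$\Gamma$ is not a join'' precludes the join-link obstruction of Corollary \ref{cor_convex_join_subset}.

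For the forward direction, I would assume $\Gamma$ is not a join. Since $W_\Gamma$ is infinite, $|\Gamma| \ge 2$, and a graph on at least two vertices is a join if and only if its complement is disconnected. Thus $\Gamma^c$ is connected on at least two vertices, so in particular has no isolated vertex, and $X = \Sigma_\Gamma$ has straight links by Proposition \ref{prop_racg_straight_links}. Now suppose for contradiction that $\partial X \ne B(X)$: Corollary \ref{cor_convex_join_subset} would produce an unbounded convex subset $Y \subset X$ each of whose vertices has a link in $X$ whose $1$-skeleton is a join. But by construction of the Davis complex, the $1$-skeleton of the link of every vertex of $\Sigma_\Gamma$ is exactly $\Gamma$, so $\Gamma$ would be a join, contradicting our assumption.

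For the reverse direction, assume $\Gamma$ is a join. I would iterate the join decomposition to write $\Gamma = \Lambda_1 * \cdots * \Lambda_k$ with $k \ge 2$ and each $\Lambda_i$ not itself a join; correspondingly $X = \Sigma_{\Lambda_1} \times \cdots \times \Sigma_{\Lambda_k}$. Since $W_\Lambda$ is finite whenever $\Lambda$ is a clique, and since a clique on at least two vertices is a join, the ``not a join'' condition on $\Lambda_i$ together with $W_{\Lambda_i}$ finite forces $|\Lambda_i| = 1$. Because $W_\Gamma$ is infinite, at least one factor $W_{\Lambda_i}$ must be infinite, so we may assume $|\Lambda_1| \ge 2$ with $\Lambda_1$ not a join, hence with $\Lambda_1^c$ connected on at least two vertices. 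Proposition \ref{prop_racg_straight_links} then gives that $\Sigma_{\Lambda_1}$ is essential. Grouping $X$ as the product of $X_2 = \Sigma_{\Lambda_1}$ and $X_1 = \Sigma_{\Lambda_2} \times \cdots \times \Sigma_{\Lambda_k}$ (which contains an edge since $k \ge 2$ and each $\Lambda_j$ is nonempty), I apply Lemma \ref{lemma_reducible_case} to conclude $\partial X \ne B(X)$.

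I do not anticipate any serious obstacle; the only subtle points are tracking the low-vertex edge cases (which are ruled out by infiniteness of $W_\Gamma$) and correctly matching the ``no isolated vertex in $\Gamma^c$'' hypothesis of Proposition \ref{prop_racg_straight_links} with the ``not a join'' hypothesis of the theorem, both of which hinge on the equivalence between $\Gamma$ being a join and $\Gamma^c$ being disconnected for $|\Gamma| \ge 2$.
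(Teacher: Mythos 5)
Your proof is correct and follows essentially the same route as the paper: the forward direction combines Proposition \ref{prop_racg_straight_links} (via the observation that $\Gamma$ not a join and $|\Gamma|\ge 2$ forces $\Gamma^c$ to have no isolated vertex) with Corollary \ref{cor_convex_join_subset}, while the reverse direction produces a product $\Sigma_{\Gamma_1}\times\Sigma_{\Gamma_2}$ with one essential factor and applies Lemma \ref{lemma_reducible_case}. The only cosmetic difference is in arranging the join decomposition so that the essential factor's complement graph has no isolated vertices: you pass to the full prime join decomposition $\Lambda_1 * \cdots * \Lambda_k$ and select a factor with infinite Coxeter group, whereas the paper iteratively moves vertices out of $\Gamma_2$; both are valid bookkeeping for the same step.
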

\begin{proof}
	Suppose first that $\Gamma$ is a join, $\Gamma = \Gamma_1 \star \Gamma_2$. As $W_\Gamma$ is infinite, without loss of generality we may assume that $\Gamma_2$ contains two non-adjacent vertices. If some vertex $v$ of $\Gamma_2$ is adjacent to every other vertex of $\Gamma_2$, then $\Gamma$ has the join decomposition $\Gamma = (\Gamma_1 \cup v) \star (\Gamma_2 \setminus v)$ and $\Gamma_2 \setminus v$ still contains two non-adjacent vertices. Thus, we can also assume without loss of generality that $\Gamma_2^c$ does not contain any isolated vertices.  The complex $X$ is then the product of two CAT(0) cube complexes, $X = \Sigma_{\Gamma_1} \times \Sigma_{\Gamma_2}$. By Proposition \ref{prop_racg_straight_links}, $\Sigma_{\Gamma_2}$ is essential, and as $\Gamma_1$ is not empty, $\Sigma_{\Gamma_1}$ contains an edge. Thus, $\partial X \neq B(X)$ by Lemma \ref{lemma_reducible_case}.
	
	On the other hand, suppose $\Gamma$ is not a join. As $W_{\Gamma}$ is infinite, $\Gamma$ is not a single vertex. By Lemma \ref{prop_racg_straight_links} $X$ has straight links. The link of any vertex of $X$ is isomorphic to $\Gamma$, so in particular is not a join. Thus by Corollary \ref{cor_convex_join_subset}, $\partial X = B(X)$.
\end{proof}

We remark that if $W_{\Gamma}$ is finite, then it trivially follows that $B(X) = \partial X = \emptyset$.

\subsection{Right-angled Artin groups}
Given a simplicial graph $\Gamma$ with vertex set $S = \{s_1, s_2, ..., s_n\}$ and edge set $E$, the corresponding right-angled Artin group is given by the presentation:
\[W_{\Gamma} = \langle S ~| ~ s_is_j = s_js_i \text{ for } (s_i, s_j) \in E  \rangle \]
Every right-angled Artin group is the fundamental group of a natural cube complex known as its Salvetti complex. A right-angled Artin group acts geometrically on the universal cover of its Salvetti complex, which is a CAT(0) cube complex. We refer the reader to \cite{Wise} for a background. Like with the Davis complex, the links of vertices in the universal cover of a Salvetti complex are isomorphic. Types of hyperplanes are defined similarly to that of the Davis complex, and the types of intersecting hyperplanes are adjacent vertices.

\begin{lemma}\label{lemma_raag_straight_links}
	Let $A_{\Gamma}$ be a right-angled Artin group, and let $X$ be the universal cover of the corresponding Salvetti complex. Then $X$ has straight links and is essential.
\end{lemma}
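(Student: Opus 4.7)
The plan is to exploit the key structural difference between the Salvetti complex of a right-angled Artin group and the Davis complex of a right-angled Coxeter group: in a RAAG, every generator has infinite order, so at each vertex of $X$ every generator contributes \emph{two} distinct incident edges (corresponding to multiplication by $s$ and $s^{-1}$ on the right), and these two edges are dual to two distinct parallel hyperplanes of the same type.

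First I would recall how hyperplane types behave in $X$. Each hyperplane $\hat{h}$ in $X$ is assigned a type, namely a vertex $s$ of $\Gamma$, via the label of any (equivalently every) edge dual to $\hat{h}$. Two hyperplanes of $X$ whose types are equal cannot intersect, because an intersection would be witnessed by a square with both pairs of opposite edges labeled $s$, but in the Salvetti complex the $2$--cells correspond to edges of $\Gamma$ and there is no self-loop at $s$.

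Now, given a vertex $v \in X$ and an edge $e$ adjacent to $v$ labeled by a generator $s$, I would produce the required edge $f$ as follows. Let $g$ be the group element corresponding to $v$; then $e$ joins $v$ to one of $gs$ or $gs^{-1}$. The other of $gs,gs^{-1}$ is connected to $v$ by a second edge $f$ also labeled $s$. The edges $e$ and $f$ are distinct, hence (since a hyperplane is dual to at most one edge incident to a given vertex) they are dual to distinct hyperplanes $\hat{h}_e$ and $\hat{h}_f$. Both have type $s$, so by the preceding paragraph $\hat{h}_e \cap \hat{h}_f = \emptyset$. This verifies the straight links condition of Definition \ref{def_ecg}.

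Essentiality is then immediate from Lemma \ref{lemma_straight_links_implies_essential}. The only genuinely technical point is the claim that distinct hyperplanes of the same type do not intersect; the main obstacle is to cite or briefly justify this standard fact about Salvetti complexes, but it follows cleanly from the absence of self-loops in $\Gamma$ together with the observation that intersecting hyperplanes must have adjacent types in $\Gamma$.
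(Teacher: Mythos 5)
Your proof is correct and follows essentially the same route as the paper: produce a second edge at $v$ with the same label $s$, note that same-type hyperplanes cannot intersect, and invoke Lemma \ref{lemma_straight_links_implies_essential} for essentiality. You simply spell out slightly more explicitly why the second $s$-labeled edge exists (infinite order of generators) and why same-type hyperplanes are disjoint (no self-loops in $\Gamma$), which the paper takes for granted.
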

\begin{proof}
	Let $v$ be a vertex and $e$ an edge in $X$. Let $s \in V(\Gamma)$ be the label of $e$. It follows there is an edge, $f \neq e$, adjacent to $v$ with label $s$. As the hyperplane dual to $e$ has the same label as the hyperplane dual to $f$, they do not intersect. Thus, $X$ has straight links and is essential by Lemma \ref{lemma_straight_links_implies_essential}.
\end{proof}

The proof of the following theorem is similar to that of Theorem \ref{thm_racg_app}.

\begin{theorem} \label{thm_raag_app}
	Let $A_{\Gamma}$ be a right-angled Artin group, and let $X$ be the universal cover of the corresponding Salvetti complex. Then $\partial X = B(X)$ if and only if $\Gamma$ is not a join. 
\end{theorem}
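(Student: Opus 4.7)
The plan is to mirror the proof of Theorem \ref{thm_racg_app}, using Lemma \ref{lemma_reducible_case} for one direction and Corollary \ref{cor_convex_join_subset} for the other. For the ``only if'' direction, suppose $\Gamma = \Gamma_1 \star \Gamma_2$ is a (non-trivial) join decomposition. Then $A_\Gamma$ splits as the direct product $A_{\Gamma_1} \times A_{\Gamma_2}$, and $X$ decomposes as a product $X_1 \times X_2$, where $X_i$ is the universal cover of the Salvetti complex of $A_{\Gamma_i}$. By Lemma \ref{lemma_raag_straight_links}, each $X_i$ is essential, and since each $\Gamma_i$ is non-empty, each $X_i$ contains an edge. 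As $A_{\Gamma_i}$ acts geometrically on $X_i$, both factors are cocompact and locally finite, so Lemma \ref{lemma_reducible_case} yields $\partial X \neq B(X)$.

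For the ``if'' direction, suppose $\Gamma$ is not a join. By Lemma \ref{lemma_raag_straight_links}, $X$ has straight links, and $X$ is locally finite and cocompact. I will argue the contrapositive via Corollary \ref{cor_convex_join_subset}: if $\partial X \neq B(X)$, then $X$ contains an unbounded convex subset $Y$ every vertex of which has a link that is a join. Since $A_\Gamma$ acts transitively on the vertices of $X$, every vertex of $X$ has the same link $L$, which must therefore be a join. The complex $L$ can be described explicitly: it has two vertices $s^+, s^-$ for each $s \in V(\Gamma)$, and $s^\varepsilon$ is adjacent to $t^\delta$ in $L$ if and only if $s \neq t$ and $\{s, t\} \in E(\Gamma)$.

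The main (and only non-routine) step is to pass from ``$L$ is a join'' to ``$\Gamma$ is a join''. The key observation is that for every $s \in V(\Gamma)$ the pair $s^+, s^-$ is non-adjacent in $L$, so in any join decomposition $L = L_1 \star L_2$ the two vertices $s^+$ and $s^-$ must be placed on the same side. Partitioning $V(\Gamma) = V_1 \sqcup V_2$ according to which side contains the pair $\{s^+, s^-\}$, the join condition forces $\{v, w\} \in E(\Gamma)$ for every $v \in V_1$ and $w \in V_2$. Both $V_1$ and $V_2$ are non-empty since the join decomposition of $L$ is non-trivial. Hence $\Gamma = \Gamma[V_1] \star \Gamma[V_2]$ is a join, contradicting the hypothesis.

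The expected obstacle is minor: it is simply the bookkeeping step identifying $L$ with the ``double'' of $\Gamma$ and verifying that a join structure on $L$ descends to one on $\Gamma$. All other ingredients—straight links, local finiteness, cocompactness, and the product decomposition of $X$ when $\Gamma$ is a join—are either already recorded in Lemma \ref{lemma_raag_straight_links} or are standard properties of Salvetti complexes.
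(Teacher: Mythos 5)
Your proof is correct and follows essentially the same route as the paper's: use Lemma \ref{lemma_reducible_case} for the join direction, and Lemma \ref{lemma_raag_straight_links} plus Corollary \ref{cor_convex_join_subset} for the other. The only difference is that the paper simply asserts, in a parenthetical, that the vertex link of $X$ is a join if and only if $\Gamma$ is, whereas you carefully justify this via the observation that $s^+$ and $s^-$ are never adjacent in the doubled link and so must land on the same side of any join decomposition.
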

\begin{proof}
	If $\Gamma$ is a join, then $X$ splits as a product whose factors, by Lemma \ref{lemma_raag_straight_links}, are essential. Thus, by Proposition \ref{lemma_reducible_case}, $\partial X \neq B(X)$. On the other hand, if $\Gamma$ is not a join, then the link of every vertex of $X$ is not a join as well (the link of a vertex of $X$ is a join if and only if $\Gamma$ is a join). Thus, $\partial X = B(X)$ by Lemma \ref{lemma_raag_straight_links} and Corollary \ref{cor_convex_join_subset}.
\end{proof}

Theorem \ref{intro_thm_racg_raag} from the introduction now follows from Theorem \ref{thm_racg_app} and Theorem \ref{thm_raag_app}, as a right-angled Artin/Coxeter group splits as a direct product if and only if its defining graph is a join.

\section{$2$--dimensional CAT(0) Cube Complexes} \label{sec_2d_case}

In this section, we focus on $2$--dimensional CAT(0) cube complexes. The goal is to prove Theorem \ref{thm_2_dim_ccc}.

The strategy for proving Theorem \ref{thm_2_dim_ccc} is as follows. We first prove several nice properties regarding tight cages in the $2$--dimensional setting. For one, the existence of a tight cage implies the existence of a sectorless tight cage (a tight cage with empty sector). This is the content of Proposition \ref{prop_tight_cage_implies_sectorless} below. Additionally, a halfspace that is in a sectorless tight cage has its positive carrier equal to the core of the cage (Proposition \ref{prop_2d_tight cage}). 

We then describe the $(G, \hat{h})$--collapsing map, which produces a new CAT(0) cube complex by collapsing the carriers of hyperplanes in the same orbit class of a given hyperplane, $\hat{h}$. We show that when $\hat{h}$ is chosen appropriately this new complex inherits desirable properties of the original complex. To prove Theorem \ref{thm_2_dim_ccc}, we apply the $(G, \hat{h})$--collapsing map to produce a new CAT(0) cube complex with strictly fewer orbit classes of sectorless tight cages. By applying such maps finitely many times, we obtain the main theorem.

\subsection{Tight cages in dimension two}

\begin{definition}[Sectorless Tight Cage] \label{def_sectorless_tight cage}
	Let $X$ be a CAT(0) cube complex. A \textit{sectorless tight cage} in $X$ is a tight cage, $(\mathcal{S}, \mathcal{T})$ in $X$, satisfying $\mathcal{S} = \emptyset$.  We usually simply say $\mathcal{T}$ is a sectorless tight cage (leaving $\mathcal{S}$ out of the notation).
\end{definition}

\begin{remark} \label{rmk_sectorless_tight_cages_in_2d}
	Let $\mathcal{T}$ be a sectorless tight cage in a $2$--dimensional CAT(0) cube complex. It follows that distinct hyperplanes in $\hat{\mathcal{T}}$ do not intersect. For suppose $h, k \in \mathcal{T}$ were distinct halfspaces such that $\hat{h}$ and $\hat{k}$ intersect. Then $\mathcal{C}^+(h) \cap \mathcal{C}^+(k)$ consists of exactly one vertex (as $X$ is $2$-dimensional). However, this contradicts tight cage condition \ref{def_tight cage_unbounded_core}, as then the core of $\mathcal{T}$ cannot be unbounded. 
\end{remark}

\begin{proposition} \label{prop_tight_cage_implies_sectorless}
	Let $X$ be a $2$--dimensional, cocompact CAT(0) cube complex which contains a tight cage, $(\mathcal{S}, \mathcal{T})$. Then $X$ contains a sectorless tight cage $\mathcal{T}'$ such that $\mathcal{T} \subset \mathcal{T}'$.
\end{proposition}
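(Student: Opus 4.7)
My plan is first to reduce the problem by a dimension argument to the case $|\mathcal{S}|=1$, and then to verify that $(\emptyset,\mathcal{T})$ itself already forms a sectorless tight cage, with the main work being to check condition \ref{def_tight cage_core_prop} of Definition \ref{def_tight_cage} on the enlarged core.

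I would begin by noting that in a $2$-dimensional CAT(0) cube complex, three hyperplanes that pairwise intersect would span a $3$-cube, which is impossible. By tight cage condition \ref{def_tight_cage_intersections}, any two hyperplanes in $\hat{\mathcal{S}}$ pairwise intersect, and each intersects every $\hat{t}\in\hat{\mathcal{T}}$; since $|\mathcal{T}|\geq 2$, having $|\mathcal{S}|\geq 2$ would yield three mutually intersecting hyperplanes and a $3$-cube, a contradiction. Hence $|\mathcal{S}|\leq 1$. If $\mathcal{S}=\emptyset$, take $\mathcal{T}'=\mathcal{T}$ and we are done, so assume $\mathcal{S}=\{s\}$. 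Take the candidate $\mathcal{T}':=\mathcal{T}$. Conditions \ref{def_tight cage_large_T}, \ref{def_tight_cage_intersections}, \ref{def_tight cage_unbounded_core} are immediate for $(\emptyset,\mathcal{T})$: $|\mathcal{T}|\geq 2$, the intersection condition is vacuous, and the new core $Z=\bigcap_{t\in\mathcal{T}}\mathcal{C}^+(t)$ contains the unbounded original core $Y=Z\cap s$.

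The crux is condition \ref{def_tight cage_core_prop}: every hyperplane $\hat{h}$ dual to an edge adjacent to $Z$ must lie in $\hat{\mathcal{T}}$ or intersect every $\hat{t}$. For a vertex $v\in Y$ this follows from the original tight cage condition applied to $(\{s\},\mathcal{T})$, because the only new possibility $\hat{h}=\hat{s}$ is harmless: $\hat{s}$ itself intersects every $\hat{t}$. The genuine difficulty is vertices $v\in Z\cap s^*$. To handle this, I would exploit cocompactness via Lemma \ref{lemma_cocompact_action_on_cores}: the subgroup $G'\leq\mathrm{Aut}(X)$ stabilizing $Z$ and each $t\in\mathcal{T}$ acts cocompactly on $Z$. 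Since $Y$ is unbounded and convex in the tree-like $Z$ (a convex intersection of carriers isometric to the $1$-dimensional $\hat{t}$'s), some element $g\in G'$ translates $Z$ so that the halfspaces $g^n(s)$ shift along $Z$. Given any putative problematic $\hat{h}$ adjacent to $v\in Z\cap s^*$, for suitable $n$ we have $v\in g^n(Y)$, which is the core of the shifted tight cage $(\{g^n(s)\},\mathcal{T})$. Applying condition \ref{def_tight cage_core_prop} of that shifted tight cage then forces $\hat{h}\in\hat{\mathcal{T}}\cup\{g^n(\hat{s})\}$ or $\hat{h}$ intersects every $\hat{t}$; since $g^n(\hat{s})$ also intersects every $\hat{t}$, the desired conclusion holds.

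The main obstacle is ensuring that the translations $g^n$ indeed cover all of $Z\cap s^*$ that could harbor problematic hyperplanes. This should follow from the $2$-dimensional tree structure of $Z$ together with the cocompactness of $G'$ on $Z$, but it requires care, since $g$ may not stabilize $s$ and the orbits of halfspaces under $G'$ must be tracked along $Z$. If a residual bounded piece of $Z\cap s^*$ is not swept out by translation, the fallback is to enlarge $\mathcal{T}$ by finitely many additional halfspaces, one per $G'$-orbit of problematic hyperplane adjacent to that piece, chosen on the side containing the unbounded direction of $Y$; the finiteness of such orbits (from cocompactness and local finiteness) and the tree structure of $Z$ ensure both that the enlargement is finite and that the resulting core remains unbounded, completing the construction of $\mathcal{T}'\supset\mathcal{T}$.
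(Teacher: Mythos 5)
Your setup is right: in a $2$-dimensional cube complex you correctly reduce to $|\mathcal{S}|\le 1$, and conditions \ref{def_tight cage_large_T}--\ref{def_tight cage_unbounded_core} are indeed immediate for $(\emptyset,\mathcal{T})$ with core $Z=\bigcap_{t\in\mathcal{T}}\mathcal{C}^+(t)\supset Y$. But your primary candidate $\mathcal{T}'=\mathcal{T}$ is not what the paper proves, and for good reason: it can fail. The key difficulty is that $Z$ is a tree (no two hyperplanes meeting $Z$ can cross, by dimension), and it may have many ends, only one of which is cut off by $\hat{s}$. Your translation argument implicitly assumes that a single element $g\in G'$ shifts $Y$ so that $\{g^nY\}$ sweeps all of $Z\cap s^*$; but a hyperbolic isometry of a tree moves along a single axis and will miss branches of $Z$ that do not lie along that axis. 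So the main step of your ``primary plan'' --- deducing condition \ref{def_tight cage_core_prop} for the enlarged core from the original tight cage condition plus a translation --- has a genuine gap.

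The paper instead \emph{enlarges} $\mathcal{T}$: it sets $\hat{H}$ to be the hyperplanes crossing $Z$, and $\hat{K}$ to be the hyperplanes dual to an edge adjacent to $Z$ that intersect \emph{every} hyperplane in $\hat{H}$, and takes $\mathcal{T}'=\mathcal{T}\cup K$. The constraint ``intersect every hyperplane in $\hat{H}$'' is the load-bearing point: it forces, via Helly, the new core $Y'=\bigcap_{t'\in\mathcal{T}'}\mathcal{C}^+(t')$ to still meet every $\hat{l}\in\hat{H}$, hence to stay unbounded. Only \emph{after} this enlargement does the paper run a cocompactness/translation argument, and it runs it on $Y'$ (not $Z$), moving a residual problematic hyperplane $\hat{m}$ to one adjacent to $Y'\cap s$. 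Your fallback (``enlarge by one halfspace per $G'$-orbit of problematic hyperplane, oriented towards $Y$'') is a different and unjustified prescription: nothing forces a generic problematic hyperplane to cross all of $\hat{H}$, so adding its $Y$-facing halfspace can cut the core down to a bounded (even single-vertex) set, destroying condition \ref{def_tight cage_unbounded_core}. In short, you have the right general shape (enlarge $\mathcal{T}$, then use cocompactness), but the intrinsic definition of the set $K$ via intersection with $\hat{H}$ --- not via orbits of problematic hyperplanes --- is the missing idea that makes the construction work.
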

\begin{proof}	
	We may assume $\mathcal{S}$ is nonempty. Recall that a $2$--dimensional cube complex has at most two pairwise intersecting hyperplanes. Thus, as hyperplanes in $\hat{\mathcal{S}}$ each intersect every hyperplane in $\hat{\mathcal{T}}$ (tight cage condition \ref{def_tight_cage_intersections}), $\mathcal{S}$ contains exactly one halfspace, $\mathcal{S} = \{s\}$.
	
	Let $Z = \bigcap_{t \in \mathcal{T}} \mathcal{C}^+(t)$. Let $\hat{H}$ be the set of hyperplanes in $X$ that intersect $Z$. In particular, $\hat{s} \in \hat{H}$. As $X$ is $2$--dimensional, hyperplanes in $\hat{H}$ do not pairwise intersect. Furthermore, $|\hat{H}|$ is infinite as the core $Y = Z \cap s$ of the tight cage is unbounded (tight cage condition \ref{def_tight cage_unbounded_core}).
	Let $H$ be the choice of halfspaces for hyperplanes in $\hat{H}$ so that for each $h \in H$ either $h \subset s$ or $s \subset h$. 
	
	Let $\hat{K}$ be the set of hyperplanes that are dual to an edge adjacent to $Z$ and intersect every hyperplane in $\hat{H}$. Again by the dimension of $X$, a hyperplane in $\hat{K}$ cannot intersect a hyperplane in $\mathcal{T}$. Consequently, hyperplanes in $\hat{K}$ do not intersect $Z$. Let $K$ be the choice of halfspaces for hyperplanes in $\hat{K}$ such that $Z \subset k$ for each $k \in K$. 
	
	As $X$ is $2$--dimensional, for every $h \in H$ the set $\mathcal{C}^+(h) \cap Z$ consists of a single vertex, $v_h$. Furthermore, for every $k \in K$ and $h \in H$, $\mathcal{C}^+(k) \cap \mathcal{C}^+(h) \cap Z = v_h$ by Helly's property. In particular, as $X$ is locally finite, $|K|$ is finite. 
	
	Let $\mathcal{T}' = \mathcal{T} \cup K$, and set $Y' = \bigcap_{t \in \mathcal{T}'} \mathcal{C}^+(t)$.	We will show that $\mathcal{T}'$ is a sectorless tight cage with core $Y'$ by checking the tight cage conditions in Definition \ref{def_tight_cage}. Note that tight cage condition \ref{def_tight_cage_intersections} is always vacuously true for sectorless tight cages.
	
	As $|\mathcal{T}'| \ge |\mathcal{T}| \ge 2$, tight cage condition \ref{def_tight cage_large_T} follows. Furthermore, by Helly's property, $Y' \cap \hat{h} \neq \emptyset$ for every $\hat{h} \in \hat{H}$. Thus, $Y'$ is unbounded, and tight cage condition \ref{def_tight cage_unbounded_core} follows.
	
	Finally, we show that every hyperplane dual to an edge adjacent to $Y'$ is either in $\mathcal{T}'$ or intersects every hyperplane in $\hat{\mathcal{T}}'$, i.e. tight cage condition \ref{def_tight cage_core_prop}. Let $\hat{m}$ be a hyperplane, that is not in $\mathcal{T}'$ and is dual to an edge adjacent to $Y'$. 
	
	By Lemma \ref{lemma_cocompact_action_on_cores} there exists an isometry, $g$ of $X$, which stabilizes $Y'$, stabilizes the halfspaces in $\mathcal{T}'$ and such that $g\hat{m}$ is dual to an edge adjacent to $Y' \cap s$ (this last fact follows since $Y' \cap s$ is unbounded). As $g$ stabilizes hyperplanes in $\mathcal{T}'$, $g \hat{m} \notin \mathcal{T}'$. 
	
	As $(\mathcal{S}, \mathcal{T})$ is a tight cage and $Y' \subset Y$, $g \hat{m}$ intersects every hyperplane in $\mathcal{T}$. Thus $g\hat{m} \in \hat{H}$. By definition, $g \hat{m}$ intersect every hyperplane in $\hat{K}$. Consequently $g \hat{m}$ intersects every hyperplane in $\hat{\mathcal{T}}' = \hat{\mathcal{T}} \cup \hat{K}$ as well. Hence, $\hat{m}$ intersects every hyperplane in $\hat{\mathcal{T}}'$. We have thus shown that $\mathcal{T}'$ is a sectorless tight cage.
\end{proof}

\begin{proposition} \label{prop_2d_tight cage}
	Let $X$ be a $2$--dimensional CAT(0) cube complex containing a sectorless tight cage $\mathcal{T}$, and let $Y = \bigcap_{t \in \mathcal{T} }\mathcal{C}^+(t)$ be the core of this tight cage. Then $Y = \mathcal{C}^+(t)$ for every $t \in \mathcal{T}$.
\end{proposition}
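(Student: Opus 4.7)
The plan is to argue by contradiction: assume $Y \subsetneq \mathcal{C}^+(t_0)$ for some $t_0 \in \mathcal{T}$ and derive a contradiction by combining tight cage condition~\ref{def_tight cage_core_prop} with Lemma~\ref{lemma_convexity1}. The two main geometric levers are $2$-dimensionality (which makes each $\mathcal{C}^+(t)$ a tree) and Remark~\ref{rmk_sectorless_tight_cages_in_2d} (which asserts that the hyperplanes $\hat{t}$, $t \in \mathcal{T}$, are pairwise disjoint).

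First I would produce the test edge. Since $\dim X = 2$, the hyperplane $\hat{t}_0$ is at most $1$-dimensional, so $\mathcal{C}^+(t_0) \cong \hat{t}_0$ is a tree. The core $Y$, being the intersection of convex subcomplexes, is a convex subcomplex of $\mathcal{C}^+(t_0)$, hence a subtree; it is nonempty (indeed unbounded) by tight cage condition~\ref{def_tight cage_unbounded_core}. Thus $Y$ is a proper nonempty subtree of the tree $\mathcal{C}^+(t_0)$, so there is an edge $e \subseteq \mathcal{C}^+(t_0)$ with one endpoint $v_0 \in Y$ and the other endpoint $v \notin Y$. Let $\hat{h}$ denote the hyperplane dual to $e$.

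Next I would rule out $\hat{h} \in \hat{\mathcal{T}}$. Edges of $\mathcal{C}^+(t_0) = \hat{t}_0 \times \{1\}$ are horizontal in the product carrier $\hat{t}_0 \times [0,1]$, so they are never dual to $\hat{t}_0$; hence $\hat{h} \neq \hat{t}_0$. If $\hat{h} = \hat{t}'$ for some other $t' \in \mathcal{T}$, then the square of the carrier $\mathcal{C}(\hat{t}_0)$ containing $e$ would be crossed by both $\hat{t}_0$ and $\hat{t}'$, forcing $\hat{t}_0 \cap \hat{t}' \neq \emptyset$ and contradicting Remark~\ref{rmk_sectorless_tight_cages_in_2d}. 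So $\hat{h} \notin \hat{\mathcal{T}}$, and since $e$ is adjacent to $v_0 \in Y$, tight cage condition~\ref{def_tight cage_core_prop} forces $\hat{h}$ to cross every hyperplane in $\hat{\mathcal{T}}$.

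Finally I would reach a contradiction by applying convexity to a second carrier. Since $v \notin Y$ and $v \in \mathcal{C}^+(t_0)$, choose $t' \in \mathcal{T} \setminus \{t_0\}$ with $v \notin \mathcal{C}^+(t')$. By the previous step, $\hat{h} \cap \hat{t}' \neq \emptyset$, so pick a square $Q$ in which both hyperplanes have midcubes. The edge of $Q$ dual to $\hat{h}$ whose endpoints both lie on the $t'$-side of $\hat{t}'$ is an edge of $\mathcal{C}^+(t') = \hat{t}' \times \{1\}$, showing $\hat{h}$ meets $\mathcal{C}^+(t')$. Since $v_0 \in Y \subseteq \mathcal{C}^+(t')$, the edge $e$ is adjacent to the convex subcomplex $\mathcal{C}^+(t')$, and its dual hyperplane $\hat{h}$ meets $\mathcal{C}^+(t')$; Lemma~\ref{lemma_convexity1} then gives $e \subseteq \mathcal{C}^+(t')$, whence $v \in \mathcal{C}^+(t')$, contradicting the choice of $t'$. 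The main subtlety is to verify that $\hat{h}$ actually intersects the positive carrier $\mathcal{C}^+(t')$ rather than merely the hyperplane $\hat{t}'$; this is where the explicit $2$-dimensional geometry of the crossing square is used, after which Lemma~\ref{lemma_convexity1} finishes the argument.
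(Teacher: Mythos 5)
Your proposal is correct and takes essentially the same approach as the paper: in both arguments one takes a boundary edge $e$ of $Y$ inside $\mathcal{C}^+(t_0)$, observes that its dual hyperplane $\hat{h}$ crosses $\hat{t}_0$ and so, by Remark~\ref{rmk_sectorless_tight_cages_in_2d}, cannot lie in $\hat{\mathcal{T}}$, invokes tight cage condition~\ref{def_tight cage_core_prop} to get that $\hat{h}$ crosses every hyperplane in $\hat{\mathcal{T}}$, and then uses Lemma~\ref{lemma_convexity1} to pull $e$ into the relevant convex set. The only cosmetic difference is that the paper applies Lemma~\ref{lemma_convexity1} once to $Y$ itself (which implicitly uses Helly's property to see that $\hat{h}$ actually meets $Y$), whereas you apply it to a single $\mathcal{C}^+(t')$ chosen so that $v\notin\mathcal{C}^+(t')$, which bypasses Helly at the cost of picking a witness $t'$.
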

\begin{proof}
	Let $t \in \mathcal{T}$ and let $e$ be an edge in $\mathcal{C}^+(t)$ that is adjacent to $Y$. As $\mathcal{C}^+(t)$ is connected, to prove the claim it suffices to show that $e \subset Y$.
	
	Let $\hat{h}$ be the hyperplane dual to $e$. 
	The hyperplane $\hat{h}$ intersects $\hat{t}$, so is not in $\hat{\mathcal{T}}$ by Remark \ref{rmk_sectorless_tight_cages_in_2d}. Thus, $\hat{h}$ intersects $Y$ by tight cage condition \ref{def_tight cage_core_prop}. By Lemma \ref{lemma_convexity1}, $e$ is contained in $Y$.
\end{proof}

\begin{remark}
	The $2$--dimensional assumption in the above proposition is necessary. The sectorless tight cage shown on the left of Figure \ref{fig_tight_cage}, for instance, does not satisfy the conclusion of this proposition.
\end{remark}

We get the following two corollaries:

\begin{corollary} \label{cor_2d_tight cage}
	Let $\mathcal{T}$ be a sectorless tight cage in a $2$--dimensional CAT(0) cube complex, then all hyperplanes in $\mathcal{T}$ are isometric.
\end{corollary}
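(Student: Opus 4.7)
The plan is to leverage the previous proposition, Proposition \ref{prop_2d_tight cage}, which is the main technical input. That proposition tells us that for a sectorless tight cage $\mathcal{T}$ in a $2$--dimensional CAT(0) cube complex, the core $Y = \bigcap_{t \in \mathcal{T}}\mathcal{C}^+(t)$ actually coincides with the positive carrier of every individual halfspace in $\mathcal{T}$, that is, $Y = \mathcal{C}^+(t)$ for every $t \in \mathcal{T}$.

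From the background section, recall that for any halfspace $t$ the carrier $\mathcal{C}(\hat{t})$ is isometric to $\hat{t} \times I$, and the positive carrier $\mathcal{C}^+(t) = \hat{t} \times \{1\}$ is, as a CAT(0) cube subcomplex, isometric to the hyperplane $\hat{t}$ itself (viewed with its induced cube complex structure). So the proof should simply combine these two facts.

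Concretely, I would argue as follows. Fix any two halfspaces $t, t' \in \mathcal{T}$. By Proposition \ref{prop_2d_tight cage}, $\mathcal{C}^+(t) = Y = \mathcal{C}^+(t')$. Since $\mathcal{C}^+(t)$ is isometric to $\hat{t}$ and $\mathcal{C}^+(t')$ is isometric to $\hat{t}'$ (both via the product structure of the respective hyperplane carriers), we conclude that $\hat{t}$ and $\hat{t}'$ are isometric, via the composition $\hat{t} \cong \mathcal{C}^+(t) = \mathcal{C}^+(t') \cong \hat{t}'$.

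There is essentially no obstacle here; the whole content is packaged into Proposition \ref{prop_2d_tight cage}, and the corollary is a one-line consequence of identifying positive carriers with their defining hyperplanes. The only thing to be slightly careful about is being explicit that the isometry $\mathcal{C}^+(t) \cong \hat{t}$ is the standard one coming from the product decomposition $\mathcal{C}(\hat{t}) \cong \hat{t} \times [0,1]$, so that the composition above is a genuine isometry of cube complexes.
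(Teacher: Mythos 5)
Your proof is correct and matches the paper's intent exactly: the corollary is presented as an immediate consequence of Proposition \ref{prop_2d_tight cage}, and your argument (all positive carriers $\mathcal{C}^+(t)$ coincide with the core $Y$, and each $\mathcal{C}^+(t)$ is isometric to $\hat{t}$ via the carrier's product structure) is precisely the one-line reasoning the paper leaves implicit.
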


\begin{corollary} \label{cor_neighboring_tight cages}
	Let $\mathcal{T}$ and $\mathcal{T}'$ be sectorless tight cages in a $2$--dimensional CAT(0) cube complex, and let $Y$ and $Y'$ respectively be their cores. If $d(Y, Y') = 1$, then $\hat{\mathcal{T}} \cap \hat{\mathcal{T}}' \neq \emptyset$, and a hyperplane intersects $Y$ if and only if it intersects $Y'$.
\end{corollary}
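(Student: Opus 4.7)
The plan is to find an edge realizing $d(Y, Y') = 1$, show that the hyperplane dual to it must lie in both $\hat{\mathcal{T}}$ and $\hat{\mathcal{T}}'$, and then exploit the carrier structure of $\hat{h}$ to get the second statement. First, I would choose an edge $e$ with endpoints $v \in Y$ and $v' \in Y'$, and let $\hat{h}$ be the hyperplane dual to $e$. Since $Y$ and $Y'$ are disjoint, the endpoint $v'$ is not in $Y$, so $e \not\subset Y$; Lemma \ref{lemma_convexity1} then forces $\hat{h} \cap Y = \emptyset$, and by the symmetric argument $\hat{h} \cap Y' = \emptyset$.

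Next, I would apply tight cage condition \ref{def_tight cage_core_prop} to $\mathcal{T}$: since $\hat{h}$ is dual to an edge adjacent to $Y$, either $\hat{h} \in \hat{\mathcal{T}}$, or $\hat{h}$ intersects every hyperplane in $\hat{\mathcal{T}}$. To rule out the second alternative, fix any $t \in \mathcal{T}$; by Proposition \ref{prop_2d_tight cage}, $Y = \mathcal{C}^+(t)$. If $\hat{h}$ intersected $\hat{t}$, the resulting $2$--cube inside $\mathcal{C}(\hat{t})$ would contribute an edge on the $\mathcal{C}^+(t)$ side dual to $\hat{h}$, yielding $\hat{h} \cap Y \neq \emptyset$, which contradicts the previous paragraph. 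Hence $\hat{h} \in \hat{\mathcal{T}}$, and applying the same reasoning with $\mathcal{T}'$ in place of $\mathcal{T}$ gives $\hat{h} \in \hat{\mathcal{T}}'$, establishing the first assertion.

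For the second assertion, let $h$ be the orientation of $\hat{h}$ with $v \in h$; since $\hat{h}$ does not cross the connected set $Y$, we have $Y \subset h$, and matching $\mathcal{C}^+(t) = Y \subset h$ against the possible orientations in $\mathcal{T}$ forces $h \in \mathcal{T}$. Thus $Y = \mathcal{C}^+(h)$, and symmetrically $Y' = \mathcal{C}^+(h^*)$, so $Y$ and $Y'$ are the two sides of the carrier $\mathcal{C}(\hat{h}) = \hat{h} \times I$, each isometric to $\hat{h}$ via the product structure. A hyperplane $\hat{k}$ intersects $Y$ if and only if it is dual to some edge of $Y$; each such edge sits in a $2$--cube of $\mathcal{C}(\hat{h})$ paired with a parallel edge of $Y'$ that is also dual to $\hat{k}$, and vice versa. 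Therefore $\hat{k}$ intersects $Y$ if and only if $\hat{k}$ intersects $Y'$, completing the proof.

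The main obstacle is the step ruling out the second alternative in tight cage condition \ref{def_tight cage_core_prop}, namely showing that ``$\hat{h}$ intersects $\hat{t}$ implies $\hat{h}$ intersects $Y$''. This is where two-dimensionality is essential, since it lets Proposition \ref{prop_2d_tight cage} identify $Y$ with the full positive carrier $\mathcal{C}^+(t)$; without this identification (for example in higher dimensions) $Y$ could be a proper subcomplex of $\mathcal{C}^+(t)$ and the $2$--cube witnessing the crossing $\hat{h} \cap \hat{t}$ need not deposit an edge inside $Y$.
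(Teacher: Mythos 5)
Your proof is correct and takes essentially the same route as the paper's: choose an edge realizing the distance, use convexity (Lemma \ref{lemma_convexity1}) to show the dual hyperplane misses both cores, apply tight cage condition \ref{def_tight cage_core_prop}, and then use Proposition \ref{prop_2d_tight cage} to identify $Y$ and $Y'$ with the two sides of the carrier $\mathcal{C}(\hat{h})$. The paper's version is terser — it asserts $\hat{h} \in \hat{\mathcal{T}} \cap \hat{\mathcal{T}}'$ directly from condition \ref{def_tight cage_core_prop} without spelling out why the alternative (``$\hat{h}$ intersects every hyperplane in $\hat{\mathcal{T}}$'') is impossible — and your square-in-the-carrier argument fills in exactly that implicit step.
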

\begin{proof}
	Let $e$ be an edge with endpoints on $Y$ and $Y'$, and let $\hat{h}$ be a hyperplane dual to $e$. By convexity of $Y$ and $Y'$, $\hat{h}$ does not intersect $Y$ or $Y'$. By tight cage condition \ref{def_tight cage_core_prop}, $\hat{h} \in \hat{\mathcal{T}} \cap \hat{\mathcal{T}'}$.	This proves the first claim. The second claim follows from Proposition \ref{prop_2d_tight cage}, as a hyperplane intersects $Y$ if and only if it intersects $\hat{h}$ if and only if it intersects $Y'$.
\end{proof}

The following is a converse to Proposition \ref{prop_2d_tight cage} which holds in all dimensions.
\begin{proposition} \label{prop_tight_cages_by_carriers}
	Let $X$ be an essential, locally finite CAT(0) cube complex. Let $h$ be a halfspace such that the corresponding hyperplane $\hat{h}$ is unbounded. Additionally, suppose for every halfspace $k$ such that $\mathcal{C}^+(k) \cap \mathcal{C}^+(h) \neq \emptyset$ and $\hat{k} \cap \hat{h} = \emptyset$, it follows that $\mathcal{C}^+(k) = \mathcal{C}^+(h)$. Then $h$ is contained in a sectorless tight cage.
\end{proposition}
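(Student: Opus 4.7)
The plan is to take $\mathcal{T}$ to consist of $h$ together with every halfspace $k$ (other than $h$) whose positive carrier meets $\mathcal{C}^+(h)$ and whose hyperplane is disjoint from $\hat{h}$, and to take $\mathcal{S}=\emptyset$. By the hypothesis, every such $k$ actually satisfies $\mathcal{C}^+(k) = \mathcal{C}^+(h)$, so the intended core $Y = \bigcap_{t \in \mathcal{T}} \mathcal{C}^+(t)$ coincides with $\mathcal{C}^+(h)$, which is unbounded because $\hat{h}$ is. Local finiteness of $X$ will make $\mathcal{T}$ finite: fixing any vertex $v \in \mathcal{C}^+(h)$, each $k \in \mathcal{T}$ contributes an edge at $v$ dual to $\hat{k}$, so only finitely many choices of halfspace are possible. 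Conditions (1)--(3) of Definition \ref{def_tight_cage} (with condition (2) vacuous since $\mathcal{S}=\emptyset$) will then reduce to producing a second halfspace in $\mathcal{T}$.

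For this second halfspace, I will use essentiality: choose $u \in h$ far from $\hat{h}$ and a combinatorial geodesic $\gamma$ from $u$ to $\mathcal{C}^+(h)$. Since $\mathcal{C}^+(h)$ is a wall of the product $\mathcal{C}(\hat{h}) = \hat{h}\times[0,1]$, $\gamma$ is also a shortest path from $u$ to $\mathcal{C}(\hat{h})$. Lemma \ref{lemma_convexity2} applied to $\{u\}$ and $\mathcal{C}(\hat{h})$ then ensures the hyperplane $\hat{k}$ dual to the terminal edge of $\gamma$ is not dual to any edge of $\mathcal{C}(\hat{h})$; in particular $\hat{k}\neq\hat{h}$ and $\hat{k} \cap \hat{h} = \emptyset$. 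Orienting $k$ so that $\mathcal{C}^+(k)$ contains the terminal vertex of $\gamma$ places $k$ in $\mathcal{T}\setminus\{h\}$, establishing $|\mathcal{T}|\ge 2$.

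The main step is checking tight cage condition (4). Let $\hat{m}$ be dual to an edge $e$ adjacent to $Y=\mathcal{C}^+(h)$ with $v \in e \cap Y$. If $\hat{m}\cap\hat{h}=\emptyset$, I will orient $m$ so $v \in \mathcal{C}^+(m)$, which by definition places $m$ in $\mathcal{T}$. If $\hat{m}=\hat{h}$, then $\hat{m}\in\hat{\mathcal{T}}$ directly. The delicate case is $\hat{m}\neq\hat{h}$ with $\hat{m}\cap\hat{h}\neq\emptyset$: Lemma \ref{lemma_convexity1} forces $e \subset \mathcal{C}(\hat{h})$, and the product structure of $\mathcal{C}(\hat{h})$ rules out $e$ being dual to $\hat{h}$ (which would give $\hat{m}=\hat{h}$), so $e\subset\mathcal{C}^+(h) = Y$. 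For any $t \in \mathcal{T}\setminus\{h\}$ the equality $\mathcal{C}^+(t) = Y$ then makes $e$ an edge of $\mathcal{C}^+(t)$, forcing $\hat{m}=\hat{t}$ or $\hat{m} \cap \hat{t}\neq\emptyset$; the first possibility is excluded because $\hat{t}\cap\hat{h}=\emptyset$ whereas $\hat{m}\cap\hat{h}\neq\emptyset$. Thus $\hat{m}$ crosses every hyperplane in $\hat{\mathcal{T}}$. The principal obstacle is precisely this last step: one must recognize that $\mathcal{C}^+(t) = \mathcal{C}^+(h)$ forces $\hat{t}$ and $\hat{h}$ to share the same collection of transverse hyperplanes (other than themselves), and it is this rigidity that delivers tight cage condition (4).
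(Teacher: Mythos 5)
Your proof is correct and follows essentially the same route as the paper: take $\mathcal{T}$ to be $h$ together with all halfspaces $k$ with $\mathcal{C}^+(k)\cap\mathcal{C}^+(h)\neq\emptyset$ and $\hat{k}\cap\hat{h}=\emptyset$, identify the core with $\mathcal{C}^+(h)$ via the hypothesis, produce a second element of $\mathcal{T}$ from essentiality and Lemma~\ref{lemma_convexity2}, and verify condition~\ref{def_tight cage_core_prop} by the trichotomy $\hat{m}\in\hat{\mathcal{T}}$ or $\hat{m}=\hat{h}$ or (via Lemma~\ref{lemma_convexity1}) $e\subset Y$. Your write-up merely fleshes out the same case analysis the paper states more tersely, so there is nothing substantively different to report.
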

\begin{proof}
	Let $K$ be the set of all halfspaces, $k$, for which $\mathcal{C}^+(k) \cap \mathcal{C}^+(h) \neq \emptyset$ and $\hat{k} \cap \hat{h} = \emptyset$. Let $\mathcal{T} = K \cup h$. Given a vertex $v \in \mathcal{C}^+(h)$, it follows that $v \in \mathcal{C}^+(t)$ for every $t \in \mathcal{T}$. As $X$ is locally finite, $\mathcal{T}$ is finite. We show $\mathcal{T}$ is a sectorless tight cage by checking the tight cage conditions. Tight cage condition \ref{def_tight_cage_intersections} is vacuously true.
	
	As $X$ is essential, there exists a vertex $v \in h$ distance $1$ from $\mathcal{C}^+(h)$. Let $e$ be the edge between $v$ and $\mathcal{C}^+(h)$, and let $\hat{k}$ be the hyperplane dual to $e$. By Lemma \ref{lemma_convexity2}, $\hat{k} \cap \hat{h} = \emptyset$. Let $k$ be the choice of halfspace for $\hat{k}$ so that $\mathcal{C}^+(h) \subset k$. It follows that $k \in K \subset \mathcal{T}$. Thus, $|\mathcal{T}| \ge 2$ and tight cage condition \ref{def_tight cage_large_T} holds.
	
	Set $Y = \bigcap_{t \in \mathcal{T}} \mathcal{C}^+(t) = \mathcal{C}^+(h)$. As $\hat{h}$ is unbounded, $Y$ is unbounded as well. This shows tight cage condition \ref{def_tight cage_unbounded_core} holds. 
	
	Finally, given any edge, $e$, adjacent to $Y$, by construction either the hyperplane dual to $e$ is in $\hat{\mathcal{T}}$ or $e \in \mathcal{C}^+(h) = Y$. If $e \in Y$, then the hyperplane dual to $e$ intersects every hyperplane in $\hat{\mathcal{T}}$. This shows tight cage condition \ref{def_tight cage_core_prop}.
\end{proof}

The following definition singles out hyperplanes that have exactly one of their corresponding halfspaces contained in a sectorless tight cage.

\begin{definition}[Loose Hyperplane]
	We say the hyperplane, $\hat{h}$, is a \textit{loose hyperplane}, if for some choice of halfspace $h$, $\mathcal{C}^+(h)$ is the core of a sectorless tight cage and $\mathcal{C}^+(h^*)$ is not the core of any sectorless tight cage.
\end{definition}

\begin{lemma} \label{lemma_contains_loose_hyp}
	Let $X$ be a $2$--dimensional CAT(0) cube complex that contains a sectorless tight cage. If $X$ is irreducible, then $X$ contains a loose hyperplane.
\end{lemma}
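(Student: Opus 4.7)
The plan is to argue by contradiction using irreducibility. Suppose no loose hyperplane exists in $X$; then for every halfspace $h$ such that $\mathcal{C}^+(h)$ is the core of some sectorless tight cage, $\mathcal{C}^+(h^*)$ must also be the core of some sectorless tight cage. Fix a sectorless tight cage $\mathcal{T}_0$ with core $Y_0$. By Proposition \ref{prop_2d_tight cage}, $Y_0 = \mathcal{C}^+(t)$ for every $t \in \mathcal{T}_0$, so by the assumption $\mathcal{C}^+(t^*)$ is the core of another sectorless tight cage for each $t \in \mathcal{T}_0$. Iterate this construction to obtain a family $\mathcal{F}$ of sectorless tight cages $\{\mathcal{T}_i\}$, with cores $\{Y_i\}$, reachable from $\mathcal{T}_0$ by single-step moves across a common hyperplane; let $Z = \bigcup_i Y_i$. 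Note that consecutive cores are at distance $1$, so Corollary \ref{cor_neighboring_tight cages} applies to each pair.

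The first step is to show that every vertex of $X$ lies in $Z$. Suppose $v \notin Z$ and take a geodesic from $Z$ to $v$ of minimal length; let $e$ be its first edge having terminal endpoint outside $Z$, with initial endpoint $u \in Y_i$ and dual hyperplane $\hat{h}$. Apply tight cage condition \ref{def_tight cage_core_prop} at $\mathcal{T}_i$: if $\hat{h}$ intersects $Y_i$, then by Lemma \ref{lemma_convexity1} the edge $e$ lies entirely in $Y_i \subset Z$, contradicting the choice of $e$; otherwise $\hat{h} = \hat{t}$ for some $t \in \mathcal{T}_i$, so the terminal endpoint of $e$ lies in $\mathcal{C}^+(t^*)$, which is a core in $\mathcal{F}$ by the no-loose-hyperplane assumption, again a contradiction.

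The second step partitions the hyperplanes of $X$ into $H_A = \{\hat{h} : \hat{h} \text{ intersects } Y_0\}$ and $H_B$ (the rest), and proves that every hyperplane in $H_A$ crosses every hyperplane in $H_B$, yielding a product decomposition. By iterating Corollary \ref{cor_neighboring_tight cages} along the chain of cores, $H_A$ consists exactly of the hyperplanes intersecting every $Y_i$. For an arbitrary hyperplane $\hat{h}$ of $X$, dual to an edge $e$ whose endpoints both lie in $Z$, the same case analysis as above shows either $e \subset Y_i$ for some $i$ (so $\hat{h} \in H_A$) or $\hat{h} \in \hat{\mathcal{T}}_i$ for some $i$ (and by Remark \ref{rmk_sectorless_tight_cages_in_2d} then $\hat{h}$ intersects no $Y_i$, placing $\hat{h}$ in $H_B$). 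Now for $\hat{h} \in H_A$ and $\hat{k} \in H_B \cap \hat{\mathcal{T}}_j$: $\hat{h}$ intersects $Y_j$, so applying tight cage condition \ref{def_tight cage_core_prop} at $\mathcal{T}_j$ (and noting $\hat{h} \notin \hat{\mathcal{T}}_j$ by Remark \ref{rmk_sectorless_tight_cages_in_2d}) forces $\hat{h}$ to cross every hyperplane in $\hat{\mathcal{T}}_j$, in particular $\hat{k}$. Both classes are nonempty: $H_A$ contains every hyperplane dual to an edge of the unbounded subcomplex $Y_0$, and $H_B \supset \hat{\mathcal{T}}_0$ has at least two elements by tight cage condition \ref{def_tight cage_large_T}. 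This splits $X$ nontrivially as a product of two CAT(0) cube complexes, contradicting irreducibility.

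I expect the main obstacle to be the first step, namely the careful verification that $Z$ exhausts the vertex set of $X$ — in particular ensuring the two outcomes of tight cage condition \ref{def_tight cage_core_prop} really are exhaustive for the first edge leaving $Z$ and that each yields a contradiction via the no-loose-hyperplane assumption. Once this is in hand, the classification of hyperplanes and the reducibility contradiction follow cleanly from Proposition \ref{prop_2d_tight cage}, Corollary \ref{cor_neighboring_tight cages}, and Remark \ref{rmk_sectorless_tight_cages_in_2d}.
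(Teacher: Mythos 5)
Your proof is correct, and it reaches the same product decomposition (via \cite[Lemma 2.5]{CS}) that the paper uses, but it is organized quite differently. The paper fixes a single hyperplane $\hat{h}$ with $\mathcal{C}^+(h)$ and $\mathcal{C}^+(h^*)$ both cores, and then, for an arbitrary hyperplane $\hat{k}$ disjoint from $\hat{h}$, runs a geodesic from $\mathcal{C}(\hat{h})$ to $\mathcal{C}(\hat{k})$ and iterates Proposition~\ref{prop_2d_tight cage} along the chain of hyperplanes dual to that geodesic to conclude that $\hat{k}$ and $\hat{h}$ cross exactly the same hyperplanes; this produces the two parallelism classes directly, without ever needing to exhaust the vertex set. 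You instead build the global object $Z = \bigcup Y_i$, prove $Z$ covers $X$, and then classify every hyperplane as ``in $H_A$'' or ``in some $\hat{\mathcal{T}}_j$.'' Your route has the advantage of giving an explicit tiling of $X$ by cores and a cleaner global picture of the two hyperplane classes; the paper's route is shorter because it bypasses the covering step entirely by only chasing the hyperplanes along one geodesic at a time. Two small points worth flagging in your write-up: (i) in Step~1 and Step~2 you apply tight cage condition~\ref{def_tight cage_core_prop} as if it directly yields the dichotomy ``$\hat{h}$ intersects $Y_i$ or $\hat{h}\in\hat{\mathcal{T}}_i$,'' but the condition literally gives ``$\hat{h}\in\hat{\mathcal{T}}_i$ or $\hat{h}$ crosses every hyperplane in $\hat{\mathcal{T}}_i$''; the passage from ``crosses every $\hat{t}$'' to ``intersects $Y_i$'' needs a one-line Helly argument (the paper also elides this, e.g.\ inside Proposition~\ref{prop_2d_tight cage}, so it is standard, but you should know it is being used); and (ii) your citation of Remark~\ref{rmk_sectorless_tight_cages_in_2d} to justify ``$\hat{h}$ intersects no $Y_i$'' when $\hat{h}\in\hat{\mathcal{T}}_i$ is a slight misattribution --- what you actually want is that a hyperplane never crosses its own positive carrier, together with Corollary~\ref{cor_neighboring_tight cages} to propagate to all the other $Y_j$.
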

\begin{proof}
	To prove the claim we assume that $X$ does not contain a loose hyperplane and deduce that $X$ must be reducible. Let $h$ be a halfspace in $X$ that is contained in a sectorless tight cage. As there are no loose hyperplanes in $X$, by Proposition \ref{prop_2d_tight cage}, both $Y_1 = \mathcal{C}^+(h)$ and $Y_2 = \mathcal{C}^+(h^*)$ are cores of sectorless tight cages $\mathcal{T}_1$ and $\mathcal{T}_2$.
	
	Let $\hat{H}_{\perp}$ be the set of hyperplanes in $X$ that intersect $\hat{h}$, and let $\hat{H}_{\parallel}$ be the set of hyperplanes in $X$ that do not intersect $\hat{h}$. Given $\hat{k} \in \hat{H}_{\parallel}$, let $\gamma$ be a geodesic from $\mathcal{C}(\hat{h})$ to $\mathcal{C}(\hat{k})$. Let 
	\[\hat{k}_1, \hat{k}_2, \dots, \hat{k}_n \]
	be the sequence of hyperplanes that intersect $\gamma$ listed in order in which they intersect $\gamma$. By Lemma \ref{lemma_convexity2}, $\hat{k}_i$ does not intersect $\hat{h}$ for each $1 \le i \le n$. For each $i$, choose the halfspace $k_i$ corresponding to $\hat{k}_i$ so that:
	\[k_n \supset k_{n-1} \supset \dots \supset k_1 \supset \hat{h}\]
	
	As $\hat{k}_1$ does not intersect $\mathcal{C}(\hat{h})$, by tight cage condition \ref{def_tight cage_core_prop} either $k_1 \in \mathcal{T}_1$ or $k_1 \in \mathcal{T}_2$. By Proposition \ref{prop_2d_tight cage}, a hyperplane intersects $\hat{h}$ if and only it intersects $\hat{k}_1$. By applying this reasoning iteratively, we conclude that a hyperplane intersects $\hat{k}$ if and only if it intersects $\hat{h}$.
	
	As $\hat{k}$ is an arbitrary hyperplane in $\hat{H}_{\parallel}$, it follows that every hyperplane in $\hat{H}_{\parallel} \cup \hat{h}$ intersects every hyperplane in $\hat{H}_{\perp}$. Furthermore, every hyperplane in $X$ is contained in the set $(\hat{H}_{\parallel} \cup \hat{h}) \cup \hat{H}_{\perp}$. By \cite[Lemma 2.5]{CS}, $X$ is reducible.
\end{proof}

\subsection{The collapsing map and its properties}
Let $X$ be a CAT(0) cube complex, and let $\hat{h}$ be a hyperplane in $X$. We obtain a new cube complex, $X_{\hat{h}}$, by collapsing the carrier $\mathcal{C}(\hat{h})  \cong \hat{h} \times [0,1]$ to the positive carrier $\mathcal{C}^+(h)  \cong \hat{h} \times \{1\}$ by the usual projection map. We denote by $\rho_{\hat{h}}: X \to X_{\hat{h}}$ the natural projection map. We say $X_{\hat{h}}$ is the complex obtained by applying the \textit{$\hat{h}$--collapsing map}, $\rho_{\hat{h}}$, to $X$. Such a collapsing map is also described and used in \cite{NS}.

Suppose the group $G$ acts by isometries on the CAT(0) cube complex $X$, and let $\hat{h}$ be a hyperplane in $X$. We would like to define a new CAT(0) cube complex, $X_{G, \hat{h}}$, by  collapsing hyperplanes in the $G$--orbit of $\hat{h}$. Formally, we obtain $X_{G, \hat{h}}$ by first arbitrarily ordering all hyperplanes in the $G$--orbit of $\hat{h}$:
\[\hat{h}_1, \hat{h}_2, \hat{h}_3 \dots \]
We form a sequence of CAT(0) cube complexes, $X_0 = X, X_1, X_2, \dots$ where $X_i$, for $i \ge 1$, is obtained by applying the $\hat{h}_i$--collapsing map to $X_{i-1}$. Here by a slight abuse of notation we denote by $\hat{h}_i$ the hyperplane in $X_{i-1}$ that is the image of the hyperplane $\hat{h}_i$ in $X$. i.e., $\hat{h}_i$ in $X_{i-1}$ is equal to $\rho_{\hat{h}_{i-1}}(  \rho_{\hat{h}_{i-2}}(\dots \rho_{\hat{h}_1}(\hat{h}_i)\dots $). We obtain the following sequence:
\[X \to_{\rho_{\hat{h}_1}} X_1 \to_{\rho_{\hat{h}_2} } X_2 \to_{\rho_{\hat{h}_3}} \dots \]

Define $X_{G, \hat{h}}$ to be the direct limit of this sequence, and define $\rho_{G, \hat{h}}: X \to X_{G, \hat{h}}$ to be the natural projection map. We call $\rho_{G, \hat{h}}$ the \textit{$(G, \hat{h})$--collapsing map}. When $\hat{h}$ and $G$ are understood, we set $\bar{X} = X_{G, \hat{h}}$ and $\rho = \rho_{G,\hat{h}}$. 

The following lemma gives some basic facts regarding this construction. The facts presented in this lemma will be used throughout this section.
\begin{lemma} \label{lemma_collapsed_basics}
	Let $X$ be a CAT(0) cube complex and $\hat{h}$ a hyperplane in $X$. Let $\rho = \rho_{G, \hat{h}}$ be the $G$--equivariant collapsing map, and let $\bar{X} = X_{G, \hat{h}}$ (as above), then:
	\begin{enumerate}
		\item \label{lemma_collapsed_basics_cat0} $\bar{X}$ is a CAT(0) cube complex.
		\item \label{lemma_collapsed_basics_hyp1} Given a hyperplane, $\hat{k}$, in $X$ that is not in the $G$--orbit of $\hat{h}$, it follows that $\rho(\hat{k})$ is a hyperplane in $\bar{X}$.
		\item \label{lemma_collapsed_basics_hyp2} Given a hyperplane $\hat{z}$ in $\bar{X}$, there exists a unique hyperplane $\hat{k}$ in $X$ such that $\hat{z} = \rho(\hat{k})$. We say that $\hat{k}$ is the \textbf{lift} of $\hat{z}$.
		\item \label{lemma_collapsed_basics_hyp3} Given two hyperplanes $\hat{k}$ and $\hat{k}'$ in $X$, each not in the $G$--orbit of $\hat{h}$, then $\rho(\hat{k})$ intersects $\rho(\hat{k}')$ if and only if $\hat{k}$ intersects $\hat{k}'$.
		\item \label{lemma_collapsed_basics_action} $G$ acts by isometries on $\bar{X}$, and $\rho$ is $G$--equivariant under this action.
	\end{enumerate}
\end{lemma}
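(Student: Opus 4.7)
The plan is to first establish all five claims for a single collapsing map $\rho_{\hat{h}} : X \to X_{\hat{h}}$ and then bootstrap to the direct limit. For the single collapse, I would describe $X_{\hat{h}}$ cube-by-cube: a cube $C$ of $X$ disjoint from $\hat{h}$ maps isometrically onto a cube of $X_{\hat{h}}$, while a cube $C$ meeting $\hat{h}$ factors as $C = C' \times [0,1]$ with $C' = C \cap \hat{h}$ a midcube, and $\rho_{\hat{h}}$ collapses $C$ onto $C' \times \{1\} \subset \mathcal{C}^+(h)$. From this description one reads off that hyperplanes of $X_{\hat{h}}$ are in canonical bijection with hyperplanes of $X$ other than $\hat{h}$: each midcube of $X$ not equal to a midcube of $\hat{h}$ either is preserved by $\rho_{\hat{h}}$ or is identified with its mirror image across $\hat{h}$. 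This gives claims (2) and (3) for a single collapse, and claim (4) follows because two hyperplanes $\hat{k}, \hat{k}' \neq \hat{h}$ intersect in $X$ iff they span a square in $X$, and that square either survives untouched in $X_{\hat{h}}$ or sits inside a $3$--cube meeting $\hat{h}$ that collapses to a square witnessing the intersection in $X_{\hat{h}}$.

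For the CAT(0) cube complex structure (claim (1)), the cleanest approach is to invoke Sageev's construction. The pocset of halfspaces of $X$, with the single pair $\{h, h^*\}$ deleted, remains a discrete pocset under the inherited order and complementation; its associated cube complex is precisely $X_{\hat{h}}$, which is therefore a CAT(0) cube complex by construction. The stabilizer of $\hat{h}$ in $G$ acts on this pocset, hence on $X_{\hat{h}}$, and this action is equivariant with $\rho_{\hat{h}}$.

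For the full $(G, \hat{h})$--collapse, the key observation is that the individual collapses $\rho_{\hat{h}_i}$ and $\rho_{\hat{h}_j}$ commute up to canonical isomorphism, since their effect on the halfspace pocset is just to delete the disjoint pairs $\{h_i, h_i^*\}$ and $\{h_j, h_j^*\}$, and pocset-deletions commute. Hence the direct limit $\bar{X}$ is well-defined independent of the chosen ordering of the $G$--orbit of $\hat{h}$. Each cube of $\bar{X}$ is already realized at some finite stage $X_n$, and since every $X_n$ is CAT(0) by induction, so is $\bar{X}$, with hyperplanes corresponding to hyperplanes of $X$ not in the orbit of $\hat{h}$; this yields (1)--(4) in full generality. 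Finally, any $g \in G$ permutes the orbit $\{\hat{h}_i\}$ as a set, so $g$ induces a reordering of the collapse sequence, which yields the same quotient up to canonical isomorphism; this gives the $G$--action on $\bar{X}$ and the $G$--equivariance of $\rho$.

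The main obstacle I expect is avoiding a direct verification of the flag link condition at vertices of $\mathcal{C}(\hat{h})$, where the collapse glues together the two isometric sides of $\hat{h}$ at each vertex and new adjacencies must be accounted for. Passing through the Sageev/pocset framework, rather than checking the link condition by hand, sidesteps this difficulty entirely and reduces the remaining content of the lemma to a routine unwinding of definitions.
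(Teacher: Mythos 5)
Your approach is correct but genuinely different from the paper's, and the comparison is instructive. The paper proves part (\ref{lemma_collapsed_basics_cat0}) by a direct, hands-on verification of the link condition at each vertex of $X_{i+1}$: it carefully describes the link of a collapsed vertex as a union of two complexes $\Delta_1'$ and $\Delta_2'$ glued along an isomorphic subcomplex $S_1 \cong S_2$, and then checks the flag and no-bigon conditions explicitly; the direct limit is handled by observing that any clique or bigon in the link of a vertex in $\bar{X}$ already appears at a finite stage. You instead route the CAT(0) property through Sageev's construction, identifying $X_{\hat{h}}$ as the cube complex dual to the pocset of halfspaces of $X$ with the pair $\{h,h^*\}$ deleted. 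This sidesteps the link verification entirely, which is the technically delicate part of the paper's argument. Your observation that pocset-deletions commute also cleanly establishes the independence of the ordering of the $\hat{h}_i$ — a fact the paper explicitly remarks on but declines to prove because it is not needed for their direct definition of the $G$--action via $g\bar{v} = \rho(gv)$.

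Two places where your argument would need slight expansion to be watertight. First, the identification of the Sageev dual of the restricted pocset with the collapse $X_{\hat{h}}$ is not purely definitional: one must check that every DCC ultrafilter on the restricted pocset is the restriction of a (DCC) ultrafilter on the full pocset, so that no extra vertices appear, and that the cube structures agree; for a single deletion this is a routine check, but it should be stated. Second, for the infinite orbit, you rely on ``each cube of $\bar{X}$ is realized at a finite stage,'' which is the same limiting argument the paper uses; if you instead want to delete the whole $G$--orbit of halfspace pairs from the pocset in one shot and take its Sageev dual directly, you would need to verify that this dual coincides with the direct limit (which is true here, since the restriction map on vertices is surjective and local finiteness controls the limit), rather than merely containing it as a component. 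Neither issue is a real gap, but both deserve a sentence. For parts (\ref{lemma_collapsed_basics_hyp1})--(\ref{lemma_collapsed_basics_hyp3}) your cube-by-cube description and pocset bijection line up with the paper's terse ``straightforward checks,'' and for (\ref{lemma_collapsed_basics_action}) your equivariance argument via reordering recovers the paper's direct definition.
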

\begin{proof}
	As above, let $h_1, h_2, \dots$ be an ordering on hyperplanes in the $G$--orbit of $\hat{h}$, and let $X = X_0, X_1, X_2, \dots$ be such that $X_i$ is obtained by applying the $\hat{h}_i$--collapsing map to $X_{i-1}$.
	
	\textbf{Proof of \ref{lemma_collapsed_basics_cat0} }: As $X_i$ is homotopy equivalent to $X_{i-1}$ for each $i \ge 1$, it is clear that $X_i$, for each $i$, and  $\bar{X}$ are simply connected. 
	
	We now show by induction, that for all $i \ge 0$, the link of every vertex in $X_i$ is a flag simplicial complex CAT(0). This combined with the previous paragraph imply that $X_i$ is a CAT(0) cube complex.  The base case, $X_0 = X$ follows by assumption. Assume now that $X_n$ is CAT(0) for some $n \ge 1$.
	
	Let $v$ be a vertex in $X_{n+1}$. If the preimage of $v$ in $X_n$ consists of a single vertex, it follows that the link of $v$ is isomorphic to the link of its preimage. We are then done in this case by the induction hypothesis. 
	
	The other possibility is that the preimage of $v$ consists of two vertices, $v_1$ and $v_2$ that are endpoints of an edge $e$ dual to the hyperplane $\hat{h}_{n+1}$. Let $\Delta_i$, for $i = 1,2$, be the simplicial complex which is the link of $v_i$. Let $u_i$ be the vertex of $\Delta_i$ that is dual to the edge $e$. Let $U_i$ be the vertices in $\Delta_i$ adjacent to $u_i$. Let $S_i$ be the simplicial complex spanned by $U_i$. It follows that $S_1$ is isomorphic to $S_2$, as these are symmetrical images of each other which lie on the subsets $\hat{h}_{n+1} \times \{0\}$ and $\hat{h}_{n+1} \times \{1\}$ of the carrier $\mathcal{C}(\hat{h}_{n+1})$. Let $\Delta_i'$ consist of $\Delta_i$ with $u_i$ removed and every simplex that contains $u_i$ removed as well. Let $\Delta$ denote the link of $v$. It follows $\Delta$ is isomorphic to the union of $\Delta_1'$ and $\Delta_2'$ identified along $S_1$ and $S_2$. 
	
	By this description, it follows the $1$--skeleton of $\Delta$ does not contain a bigon or loop (an edge whose endpoints are the same vertex). Suppose $u_1, \dots, u_m$ are vertices in $\Delta$ that form a clique. It follows that there are vertices $u_1', \dots, u_m'$ that are preimages of  $u_1, \dots, u_m$ and are all contained in either $\Delta_1'$ or $\Delta_2'$. Hence, $u_1', \dots ,u_m'$ spans a simplex and the image of this simplex is a simplex in $\Delta$. Thus, $X_{n+1}$ is CAT(0). We have thus shown that $X_i$ is a CAT(0) cube complex for all $i \ge 0$.
	
	Let $v$ now be a vertex in $\bar{X}$, and let $\Delta$ be its link. The $1$--skeleton of $\Delta$ cannot contain a bigon or loop. For then for some $i$, $X_i$  would contain a preimage of $v$ whose link contains a bigon or loop. Suppose now the vertices $u_1, \dots, u_m \in \Delta$ form a clique. Then there exist preimages $u_1', \dots, u_m'$ in $X_i$, for some $i$, which form a clique in $X_i$. As $X_i$ is CAT(0), $u_1', \dots, u_m'$ spans a simplex. The image under $\rho$ of this simplex is a simplex, as images of edges which contain the $u_i'$ are never collapsed. Thus, $\bar{X}$ is CAT(0).
	
	The proofs of \textbf{\ref{lemma_collapsed_basics_hyp1} - \ref{lemma_collapsed_basics_hyp3}} are straightforward checks.
	
	\textbf{Proof of \ref{lemma_collapsed_basics_action}:} Given $g \in G$ and a vertex $\bar{v} \in \bar{X}$ we define the action $g \bar{v} = \rho(gv)$ where $v \in X$ is a vertex in the preimage (under $\rho$) of $\bar{v}$. This action is well defined as given vertices $u, v \in X$, it follows that $\rho(v) = \rho(u)$ if and only if for all $g \in G$, $gu$ and $gv$ are separated only by hyperplanes in the $G$-orbit of $X$ if and only if $\rho(gu) = \rho(gv)$ for all $g \in G$. This action is defined on cells of $\bar{X}$ similarly. We now check that this gives an action by isometries.
	
	Let $u$ and $v$ be vertices in $\bar{X}$, and let $u'$ and $v'$ be vertices in $X$ in the preimage of respectively $u$ and $v$. Let $\gamma$ be a geodesic in $X$ from $u$ to $v$. Given a hyperplane $\hat{h}$ that intersects $\rho(\gamma)$, the lift (as in \ref{lemma_collapsed_basics_hyp2}) $\hat{h}'$ in $X$ of $\hat{h}$ intersects $\gamma$. Thus, no hyperplane intersects $\rho(\gamma)$ twice as no hyperplane intersects $\gamma$ twice. Hence, $\rho(\gamma)$ is a geodesic. 
	
	The geodesics $\gamma$ and $g \gamma$ contain the same number of hyperplanes that are not in the $G$--orbit of $h$. Thus, $\rho(\gamma)$ and $\rho(g \gamma)$ have the same length.
\end{proof}
We remark that $\bar{X}$ and $\rho$ also do not dependent on the ordering chosen for hyperplanes in the $G$--orbit of $\hat{h}$. However, this fact is not needed in our arguments. 

We now introduce the notion of carrier reflections, a hypothesis on a group action which will allow us to prove the main theorem of this section.

\begin{definition}[Carrier reflection] \label{def_carrier_reflection}
	Suppose $G$ acts on the $2$--dimensional CAT(0) cube complex $X$. Let $h$ be a halfspace in $X$ and $g \in G$. We say that $g$ is a \textit{carrier reflection of $h$}, if $g$ stabilizes $\mathcal{C}^+(h)$ and $g$ does not stabilize $\hat{h}$. We say \textit{$G$ acts without carrier reflections} if given any $g \in G$ and any halfspace $h$ in $X$, it follows that $g$ is not a carrier reflection of $h$. We say \textit{$G$ acts without core carrier reflections}, if given any $g \in G$ and any halfspace $h$ in $X$ which is contained in a sectorless tight cage, then it follows that $g$ is not a carrier reflection of $h$.
\end{definition}

We now show certain properties are preserved by the $(G, \hat{h})$--collapsing map.

\begin{lemma} \label{lemma_collapse_sequential_hyps}
	Let $X$ be a $2$--dimensional, locally finite CAT(0) cube complex. Suppose $G$ acts on $X$ without core carrier reflections. Let $\hat{h}$ be a loose hyperplane in $X$. Suppose $h_1, h_2$ and $h_3$ are halfspaces in $X$ such that $h_1 \supset h_2 \supset h_3$, $\mathcal{C}^+(h_1) \cap \mathcal{C}^+(h_2^*) \neq \emptyset$ and $\mathcal{C}^+(h_2) \cap \mathcal{C}^+(h_3^*) \neq \emptyset$. Then $\hat{h}_1, \hat{h}_2$ and $\hat{h}_3$ cannot all be in the $G$--orbit of $\hat{h}$.
\end{lemma}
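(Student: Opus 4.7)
The plan is to argue by contradiction: assume $\hat{h}_1, \hat{h}_2, \hat{h}_3$ all lie in the $G$--orbit of $\hat{h}$, so each $\hat{h}_i$ is loose and admits a unique distinguished halfspace $k_i \in \{h_i, h_i^*\}$ with $\mathcal{C}^+(k_i)$ the core of some sectorless tight cage. Because looseness and ``being the core of a sectorless tight cage'' are both intrinsic to the cube complex, the assignment $\hat{h}_i \mapsto k_i$ is $G$--equivariant: any $g \in G$ with $g\hat{h}_i = \hat{h}_j$ must satisfy $gk_i = k_j$, and hence $g\mathcal{C}^+(k_i) = \mathcal{C}^+(k_j)$. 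The goal is to force $\mathcal{C}^+(k_i) = \mathcal{C}^+(k_j)$ for some $i \neq j$, which will yield a $g \in G$ stabilizing this common core without stabilizing $\hat{h}_i$---in other words, a carrier reflection of a halfspace belonging to a sectorless tight cage, contradicting the no core carrier reflections hypothesis.

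To link the choices of $k_i$, I would use the hypothesized vertices $v \in \mathcal{C}^+(h_1) \cap \mathcal{C}^+(h_2^*)$ and $w \in \mathcal{C}^+(h_2) \cap \mathcal{C}^+(h_3^*)$ together with tight cage condition (\ref{def_tight cage_core_prop}) of Definition \ref{def_tight_cage}. Suppose $k_1 = h_1$; by Proposition \ref{prop_2d_tight cage}, the core of the cage $\mathcal{T}_1$ containing $h_1$ equals $\mathcal{C}^+(h_1)$, so $v$ lies in this core. The edge at $v$ dual to $\hat{h}_2$, together with $\hat{h}_1 \cap \hat{h}_2 = \emptyset$ (which rules out the ``intersects every hyperplane in $\hat{\mathcal{T}}_1$'' branch of condition (\ref{def_tight cage_core_prop})), forces $\hat{h}_2 \in \hat{\mathcal{T}}_1$; since $h_2 \subset h_1$ makes $h_2$ comparable to $h_1$, pairwise incomparability of cage halfspaces forces $h_2^* \in \mathcal{T}_1$, whence Proposition \ref{prop_2d_tight cage} gives $\mathcal{C}^+(h_2^*) = \mathcal{C}^+(h_1)$, and looseness of $\hat{h}_2$ then forces $k_2 = h_2^*$. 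Running the same argument starting instead from $k_2 = h_2^*$ (with $v$ now in the core $\mathcal{C}^+(h_2^*)$, using $h_1^* \subset h_2^*$ to rule out $h_1^* \in \mathcal{T}$) gives the converse, so $k_1 = h_1 \iff k_2 = h_2^*$; identically, $w$ yields $k_2 = h_2 \iff k_3 = h_3^*$.

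These biconditionals leave only two configurations: (A) $(k_1,k_2,k_3) = (h_1^*, h_2, h_3^*)$ or (B) $(k_1,k_2,k_3) = (h_1,h_2^*,h_3)$. In (B), the argument above already established $\mathcal{C}^+(k_1) = \mathcal{C}^+(h_1) = \mathcal{C}^+(h_2^*) = \mathcal{C}^+(k_2)$; in (A), the analogous argument at $w$ establishes $\mathcal{C}^+(k_2) = \mathcal{C}^+(h_2) = \mathcal{C}^+(h_3^*) = \mathcal{C}^+(k_3)$. Picking the pair $(i,j)$ of consecutive indices where the distinguished cores agree, and any $g \in G$ with $g\hat{h}_i = \hat{h}_j$, $G$--equivariance gives $g\mathcal{C}^+(k_i) = \mathcal{C}^+(k_j) = \mathcal{C}^+(k_i)$, so $g$ stabilizes $\mathcal{C}^+(k_i)$. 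But $g\hat{h}_i = \hat{h}_j \neq \hat{h}_i$, so $g$ does not stabilize $\hat{h}_i$, and therefore $g$ is a carrier reflection of the halfspace $k_i$---which lies in a sectorless tight cage---contradicting the hypothesis.

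The main subtlety I anticipate is the orientation bookkeeping: making sure the cage identification coming from $v$ (resp.\ $w$) is genuinely compatible with the $G$--equivariant assignment $\hat{h}_i \mapsto k_i$, so that both sides of each biconditional involve the correct halfspaces and the resulting carrier reflection indeed acts on a halfspace in a sectorless tight cage. Once the biconditionals are verified carefully, cases (A) and (B) are symmetric and each immediately produces the forbidden carrier reflection.
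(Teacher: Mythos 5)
Your proof is correct and uses the same key ingredients as the paper's: Proposition \ref{prop_2d_tight cage} to identify positive carriers with cores, tight cage condition \ref{def_tight cage_core_prop} together with $\hat{h}_i \cap \hat{h}_{i+1} = \emptyset$ to force membership of the adjacent hyperplane in the cage, looseness to pin down the distinguished orientation, and the no-core-carrier-reflection hypothesis to obtain the contradiction. The paper's version is a bit more economical — it conjugates by $g_2^{-1}$ to bring $\hat{h}_2$ to $\hat{h}$, observes that one of $g_2^{-1}\hat{h}_1$, $g_2^{-1}\hat{h}_3$ must be adjacent to $\mathcal{C}^+(h)$ (this is exactly your dichotomy $k_2 = h_2$ vs.\ $k_2 = h_2^*$ in disguise), and handles only the one resulting configuration rather than building out the full biconditional structure. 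Your equivariant-assignment framing and two-case analysis are a more verbose but equally valid organization of the same argument.
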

\begin{proof}
	As $\hat{h}$ is loose, we can set $h$ to be the choice of halfspace for $\hat{h}$ that is contained in a sectorless tight cage and $h^*$ the choice of halfspace that is not contained in any sectorless tight cage.
	
	Suppose, for a contradiction, that there exist $g_1, g_2, g_3 \in G$ such that $\hat{h}_1 = g_1\hat{h}$, $\hat{h}_2 = g_2 \hat{h}$ and $\hat{h}_3 = g_3 \hat{h}$. Then for either $i=1$ or $i =3$, $g_2^{-1}\hat{h}_i = g_2^{-1}g_i\hat{h}$ is dual to an edge adjacent to $\mathcal{C}^+(h)$. Without loss of generality, suppose this is true for $i = 1$ and set $\hat{k} = g_2^{-1}g_1\hat{h}$. Let $k$ be the halfspace corresponding to $\hat{k}$ so that $\mathcal{C}^+(h) \subset k$.
	
	As $Y = \mathcal{C}^+(h)$ is the core of a sectorless tight cage and since $\hat{k}$ does not intersect $\hat{h}$, by tight cage condition \ref{def_tight cage_core_prop} and Proposition \ref{prop_2d_tight cage}, it follows that, $\mathcal{C}^+(k) = \mathcal{C}^+(h)$. 
	
	As $\mathcal{C}^+(h^*)$ is not the core of a sectorless tight cage and isometries permute cores of tight cages, it follows that $g_2^{-1}g_1$ stabilizes $\mathcal{C}^+(h)$ and does not stabilize $\hat{h}$. However, this contradicts our hypothesis that $G$ acts without core carrier reflections.
\end{proof}

\begin{lemma} \label{lemma_not_cores_preserved}
	Let $X$ be a $2$--dimensional, essential, locally finite CAT(0) cube complex. Suppose $G$ acts on $X$ without core carrier reflections. Let $\hat{h}$ be a loose hyperplane in $X$. Let $\rho = \rho_{G, \hat{h}}$ be the $(G, \hat{h})$--collapsing map and $\bar{X} = X_{G, \hat{h}}$. 
	
	Let $k$ be a halfspace in $X$ which is not contained in a sectorless tight cage and whose corresponding hyperplane, $\hat{k}$, is unbounded. Then there exists a vertex $v \in \mathcal{C}^+(k)$ such that $\rho(v)$ is not contained in the core of any sectorless tight cage in $\bar{X}$.
\end{lemma}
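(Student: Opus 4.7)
The plan is to argue by contradiction: assume that every vertex $v \in \mathcal{C}^+(k)$ has $\rho(v)$ contained in the core of some sectorless tight cage of $\bar{X}$, and then contradict either the hypothesis that $k$ is not in a sectorless tight cage of $X$, or the ``no core carrier reflections'' hypothesis.

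First I would split on whether $\hat{k}$ belongs to the $G$-orbit of $\hat{h}$. If $\hat{k} \in G\cdot\hat{h}$, then $\hat{k}$ is loose and $k$ must be its non-core side (else $k$ would itself lie in a sectorless tight cage), so $\mathcal{C}^+(k^*)$ is the core of some cage $\mathcal{T}$. Mimicking the proof of Lemma \ref{lemma_collapse_sequential_hyps}, the no core carrier reflections hypothesis forces $k^*$ to be the unique element of $\mathcal{T}$ in the orbit. Pairing each $v \in \mathcal{C}^+(k)$ with its partner $v^* \in \mathcal{C}^+(k^*)$ across $\hat{k}$ gives $\rho(v) = \rho(v^*)$, and a local examination of $v^*$ inside the core $\mathcal{C}^+(k^*)$ rules out $\rho(v^*)$ simultaneously lying in a core of $\bar{X}$.

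The principal case is $\hat{k} \notin G\cdot\hat{h}$, so that $\bar{k} = \rho(\hat{k})$ is a hyperplane of $\bar{X}$ with $\rho(\mathcal{C}^+(k)) = \mathcal{C}^+(\bar{k})$. Using Proposition \ref{prop_core_links} together with the fact that in a two-dimensional CAT(0) cube complex each vertex link is a graph whose join bipartition is (essentially) unique, connectedness of $\mathcal{C}^+(\bar{k})$ upgrades the pointwise assumption to the global statement that $\mathcal{C}^+(\bar{k}) = \bar{Y}$ is the core of a single sectorless tight cage $\bar{\mathcal{T}}$ in $\bar{X}$ containing $\bar{k}$ (applying Proposition \ref{prop_2d_tight cage} in $\bar{X}$). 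I would then lift $\bar{\mathcal{T}}$ via Lemma \ref{lemma_collapsed_basics}(\ref{lemma_collapsed_basics_hyp2}) to a family $\mathcal{T}' = \{t_1,\dots,t_n\}$ of pairwise non-intersecting hyperplanes in $X$, with $k \in \mathcal{T}'$.

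It suffices to prove $\mathcal{T}'$ is itself a sectorless tight cage in $X$, i.e.\ that $\mathcal{C}^+(t_i) = \mathcal{C}^+(k)$ for every $i$, since this contradicts the hypothesis that $k$ lies in no sectorless tight cage. If $\mathcal{C}^+(t_i) \neq \mathcal{C}^+(k)$, pick equivalent vertices $u \in \mathcal{C}^+(k)$ and $u' \in \mathcal{C}^+(t_i)$; since $\rho$ identifies them only via crossings of orbit hyperplanes, any geodesic from $u$ to $u'$ has all edges dual to loose hyperplanes in $G\cdot\hat{h}$, which are pairwise disjoint and hence nest into a chain $m_1 \supset m_2 \supset \dots \supset m_r$ with intermediate vertices $u_j \in \mathcal{C}^+(m_j^*) \cap \mathcal{C}^+(m_{j+1})$. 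When $r \geq 3$, this directly satisfies the hypotheses of Lemma \ref{lemma_collapse_sequential_hyps}, a contradiction. The main obstacle is excluding $r = 1, 2$: in the $r=1$ case the vertex $u \in \mathcal{C}^+(k) \cap \mathcal{C}^+(m_1)$ lies in the core of the cage $\mathcal{T}_{m_1}$ containing the core side of $\hat{m}_1$, so by tight cage condition (\ref{def_tight cage_core_prop}) either $\hat{k} \in \mathcal{T}_{m_1}$ (forcing $\mathcal{C}^+(k)$ to be a core and contradicting the hypothesis on $k$) or $\hat{k}$ crosses every element of $\mathcal{T}_{m_1}$; applying the analogous analysis to $u'$ and $\hat{t}_i$, combined with no core carrier reflections, yields an incompatibility. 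The $r = 2$ case is then treated by a further iteration of the same argument.
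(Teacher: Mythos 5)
Your attempt takes a fundamentally different route from the paper, and several of the key steps are asserted rather than proved; the gaps are real, not cosmetic.

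The paper does not argue by contradiction over all vertices of $\mathcal{C}^+(k)$. It directly constructs one good vertex: since $k$ is not in a sectorless tight cage, Proposition \ref{prop_tight_cages_by_carriers} (contrapositive) produces a halfspace $x_2$ with $\mathcal{C}^+(k)\cap\mathcal{C}^+(x_2)\neq\emptyset$, $\hat k\cap\hat x_2=\emptyset$, and $\mathcal{C}^+(k)\neq\mathcal{C}^+(x_2)$, hence a vertex $v\in\mathcal{C}^+(k)$ with three hyperplanes $\hat x_1,\hat x_2,\hat x_3$ through $v$ whose intersection pattern (one disjoint pair, one crossing pair) is incompatible with $\rho(v)$ lying in a $2$--dimensional core. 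Orbit hyperplanes among the $\hat x_i$ are swapped for parallel non-orbit hyperplanes in the same cage using Proposition \ref{prop_2d_tight cage}, which makes the case $\hat k\in G\cdot\hat h$ come for free and requires no globalization.

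Concrete gaps in your plan. \textbf{(i)} In the case $\hat k\notin G\cdot\hat h$ you claim that the pointwise hypothesis (each $\rho(v)$ lies in \emph{some} core) upgrades, via ``uniqueness of the join bipartition,'' to the statement that $\mathcal{C}^+(\bar k)$ equals a single core $\bar Y$ \emph{and} that $\bar k$ lies in the corresponding cage $\bar{\mathcal{T}}$. Neither of these follows from what you cite: Proposition \ref{prop_core_links} only says links of core vertices are joins, and nothing ensures the cages agree across adjacent vertices, nor that $\bar k$ is one of the cage halfspaces rather than a hyperplane crossing the cage. This is a substantial claim to leave unproved. \textbf{(ii)} You assert that the orbit hyperplanes dual to a geodesic from $u$ to $u'$ are ``pairwise disjoint and hence nest into a chain.'' Hyperplanes crossing a common geodesic need not be pairwise disjoint, and nothing in the looseness hypothesis forces two translates of $\hat h$ to be disjoint; the paper never needs such a claim (in Proposition \ref{prop_remains_nice}\ref{lemma_remains_nice_loc_fin} it extracts three pairwise-disjoint ones via Ramsey from a long geodesic, which is weaker). \textbf{(iii)} The cases $r=1,2$ and the case $\hat k\in G\cdot\hat h$ are precisely where the real content lies, and both are dispatched with ``a local examination'' or ``a further iteration,'' with no argument given. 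As written, the proof does not establish the lemma.
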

\begin{proof}
	Set $x_1 = k$. As $x_1$ is not in a sectorless tight cage, by Proposition \ref{prop_tight_cages_by_carriers} there exists a halfspace $x_2$ such that $\mathcal{C}^+(x_1) \cap \mathcal{C}^+(x_2) \neq \emptyset$, $\hat{x}_1 \cap \hat{x}_2 = \emptyset$ and $\mathcal{C}^+(x_1) \neq \mathcal{C}^+(x_2)$. 
	
	In particular, there exists an edge either in $\mathcal{C}^+(x_1) \setminus \mathcal{C}^+(x_2)$ or in $\mathcal{C}^+(x_2) \setminus \mathcal{C}^+(x_1)$. Without loss of generality, suppose $e$ is an edge in $\mathcal{C}^+(x_2) \setminus \mathcal{C}^+(x_1)$ (the proof in the other case is the same). We may further suppose that $e$ is adjacent to a vertex, $v$ in $\mathcal{C}^+(x_1)$. Let $\hat{x}_3$ be the hyperplane dual to $e$. By Lemma \ref{lemma_convexity1}, $\hat{x}_3$ does not intersect $\hat{x}_1$. Let $x_3$ be any choice of halfspace for $\hat{x}_3$.
	
	For $1 \le i \le 3$, we define the halfspace $x_i'$. If $\hat{x}_i$ is not in the $G$--orbit of $\hat{h}$, set $x_i' = x_i$. On the other hand, suppose that $\hat{x}_i$ is in the $G$--orbit of $\hat{h}$. As $h$ is loose, there exists a halfspace, $y_i$, such that $y_i$ is in the same sectorless tight cage as either $x_i$ or $x_i^*$. In this case, we set $x_i' = y_i$. Note that $y_i$ cannot be in the $G$--orbit of $\hat{h}$ since $G$ acts without core carrier reflections.
	
	By Proposition \ref{prop_2d_tight cage}, a hyperplane intersects $\hat{x}_i$ if and only if it intersects $\hat{x}_i'$. Set $\bar{x}_i = \rho(\hat{x}_i')$. By construction $\hat{x}_i'$ is never in the $G$--orbit of $\hat{h}$ and $\bar{x}_i$ is a hyperplane in $\bar{X}$. Additionally, by construction $\rho(v) \in \bigcap_{i = 1}^{i = 3} \mathcal{C}^+(\bar{x}_i)$. Finally, we have that $\bar{x}_1$ does not intersect $\bar{x}_2$, $\bar{x}_3$ intersects $\bar{x}_2$ and $\bar{x}_3$ does not intersect $\bar{x}_1$.
	
	Suppose, for a contradiction, that $\rho(v)$ is contained in the core of a sectorless tight cage $\mathcal{T}$. By tight cage condition \ref{def_tight cage_core_prop}, every hyperplane dual to an edge adjacent to $\rho(v)$ is either in $\hat{\mathcal{T}}$ or intersects every hyperplane in $\hat{\mathcal{T}}$. 
	
	Suppose first that $\bar{x}_1 \in \hat{\mathcal{T}}$. Then $\bar{x}_2 \in \hat{\mathcal{T}}$ since $\bar{x}_1$ and $\bar{x}_2$ do not intersect. However, $\bar{x}_3 \notin \hat{\mathcal{T}}$ since $\bar{x}_3$ intersects $\bar{x}_2$ and no pair of distinct hyperplanes in $\hat{\mathcal{T}}$ intersect (Remark \ref{rmk_sectorless_tight_cages_in_2d}). On the other hand, $\bar{x}_3$ does not intersect every hyperplane in $\hat{\mathcal{T}}$ since $\bar{x}_3$ does not intersect $\bar{x}_1$. This is a contradiction.
	
	On the other hand, suppose that $\bar{x}_1 \notin \hat{\mathcal{T}}$. Then $\bar{x}_2, \bar{x}_3 \notin \hat{\mathcal{T}}$ since $\bar{x}_1$ does not intersect $\bar{x}_2$ or $\bar{x}_3$. It follows that both $\bar{x}_2$ and $\bar{x}_3$ intersect some hyperplane $\hat{t} \in \hat{\mathcal{T}}$. However, this is a contradiction, as there cannot be three pairwise intersecting hyperplanes in a $2$--dimensional cube complex. Thus, the claim follows.
\end{proof}

\begin{lemma} \label{lemma_core_lifts}
	Let $X$ be a $2$--dimensional, essential, locally finite CAT(0) cube complex. Suppose $G$ acts on $X$ without core carrier reflections. Let $\hat{h}$ be a loose hyperplane in $X$. Let $\rho = \rho_{G, \hat{h}}$ be the $(G, \hat{h})$--collapsing map and $\bar{X} = X_{G, \hat{h}}$.
	
	Suppose $k_1$ and $k_2$ are distinct halfspaces in $\bar{X}$ contained in a common sectorless tight cage. Let $l_1$ and $l_2$ be their lifts in $X$. Then $l_1$ and $l_2$ are contained in a common sectorless tight cage of $X$.
\end{lemma}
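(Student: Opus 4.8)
The plan is to show, in three moves, (i) that each lift $l_i$ already lies in a sectorless tight cage of $X$ whose core is $\mathcal{C}^+(l_i)$, (ii) that $\mathcal{C}^+(l_1)$ and $\mathcal{C}^+(l_2)$ must actually intersect, and (iii), using tight cage condition \ref{def_tight cage_core_prop}, that this forces $l_1$ and $l_2$ into a common sectorless tight cage. The hard part will be (ii).

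Set-up and Step 1. Let $\mathcal{T}$ be the sectorless tight cage of $\bar X$ containing $k_1,k_2$, with core $Y$. By Proposition \ref{prop_2d_tight cage}, $Y=\mathcal{C}^+(k_1)=\mathcal{C}^+(k_2)$, so $\hat k_1\neq\hat k_2$ and, by Remark \ref{rmk_sectorless_tight_cages_in_2d}, $\hat k_1\cap\hat k_2=\emptyset$. Since $\hat l_1,\hat l_2$ are not in the $G$--orbit of $\hat h$ (otherwise their images in $\bar X$ would not be hyperplanes), Lemma \ref{lemma_collapsed_basics} gives $\hat l_1\cap\hat l_2=\emptyset$ and, as $\hat l_i$ is not collapsed, $\rho(\mathcal{C}^+(l_i))=\mathcal{C}^+(k_i)=Y$; also $\hat l_i$ is unbounded because its image $\hat k_i\cong Y$ is (tight cage condition \ref{def_tight cage_unbounded_core}). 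Now every vertex of $\mathcal{C}^+(l_i)$ maps under $\rho$ into $Y$, the core of a sectorless tight cage of $\bar X$; since $\hat l_i$ is unbounded, the contrapositive of Lemma \ref{lemma_not_cores_preserved} shows $l_i$ lies in a sectorless tight cage $\mathcal{T}_i$ of $X$, whose core, by Proposition \ref{prop_2d_tight cage}, is $Y_i:=\mathcal{C}^+(l_i)$.

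Step 2 (the crux). I would argue $\mathcal{C}^+(l_1)\cap\mathcal{C}^+(l_2)\neq\emptyset$ by contradiction. Let $\gamma=(u_0,\dots,u_d)$ with $d\ge 1$ be a shortest path between these two carriers; it is a geodesic, so $\rho(\gamma)$ is a geodesic of $\bar X$ whose endpoints $\rho(u_0),\rho(u_d)$ lie in the convex set $Y$, whence $\rho(\gamma)\subset Y$. If some hyperplane $\hat m$ dual to an edge of $\gamma$ were not in the $G$--orbit of $\hat h$, then $\rho(\hat m)$ would be a hyperplane of $\bar X$ dual to an edge of $Y=\mathcal{C}^+(k_1)$, hence crossing $\hat k_1$, hence (Lemma \ref{lemma_collapsed_basics}) $\hat m$ would cross $\hat l_1$ and so be dual to an edge of $\mathcal{C}^+(l_1)$, contradicting Lemma \ref{lemma_convexity2}. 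So every hyperplane along $\gamma$ is in the $G$--orbit of $\hat h$; let $g_1\hat h$ be the one dual to $u_0u_1$. As $u_0\in Y_1$ but $u_1\notin Y_1$ (minimality of $d$), Lemma \ref{lemma_convexity1} shows $g_1\hat h$ does not meet $Y_1$, hence does not cross $\hat l_1\in\hat{\mathcal{T}}_1$; tight cage condition \ref{def_tight cage_core_prop} for $\mathcal{T}_1$ then forces $g_1\hat h\in\hat{\mathcal{T}}_1$. Being an isometric copy of the loose hyperplane $\hat h$, $g_1\hat h$ is loose, and the orientation of $g_1\hat h$ lying in $\mathcal{T}_1$ has positive carrier equal to the core $Y_1$ of $\mathcal{T}_1$; since that carrier is a core, the orientation must be $g_1h$ (where $h$ is the orientation of $\hat h$ with $\mathcal{C}^+(h)$ a core), so $\mathcal{C}^+(g_1h)=Y_1$. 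Because $\rho$ collapses the carrier of $g_1\hat h$, it identifies the two sides of that carrier, so $\rho(\mathcal{C}^+(g_1h^*))=\rho(\mathcal{C}^+(g_1h))=\rho(Y_1)=Y$. On the other hand $g_1h^*$ lies in no sectorless tight cage (looseness together with Proposition \ref{prop_2d_tight cage}) and $\widehat{g_1h^*}$ is unbounded, so Lemma \ref{lemma_not_cores_preserved} produces a vertex $v\in\mathcal{C}^+(g_1h^*)$ with $\rho(v)$ in no core of $\bar X$, contradicting $\rho(v)\in Y$. Hence $\mathcal{C}^+(l_1)\cap\mathcal{C}^+(l_2)\neq\emptyset$.

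Conclusion and main obstacle. Fix $v\in\mathcal{C}^+(l_1)\cap\mathcal{C}^+(l_2)=Y_1\cap Y_2$. The edge at $v$ dual to $\hat l_2$ is adjacent to $Y_1$, and $\hat l_2$ does not cross every hyperplane of $\hat{\mathcal{T}}_1$ (it does not cross $\hat l_1\in\hat{\mathcal{T}}_1$), so tight cage condition \ref{def_tight cage_core_prop} for $\mathcal{T}_1$ gives $\hat l_2\in\hat{\mathcal{T}}_1$; since $v\in\mathcal{C}^+(l_2)$ and the other orientation of $\hat l_2$ has positive carrier disjoint from $\mathcal{C}^+(l_2)$, the orientation of $\hat l_2$ in $\mathcal{T}_1$ is $l_2$, so $l_1,l_2\in\mathcal{T}_1$. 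I expect Step 2 to be the genuine obstacle: it combines the convexity of $Y$ in $\bar X$ (to see the connecting geodesic is made of collapsed hyperplanes), a recognition of its first dual hyperplane as an element of $\mathcal{T}_1$, and the conversion of the looseness of $\hat h$ into a violation of Lemma \ref{lemma_not_cores_preserved}; Step 1 and the final paragraph are essentially bookkeeping with Proposition \ref{prop_2d_tight cage} and tight cage condition \ref{def_tight cage_core_prop}.
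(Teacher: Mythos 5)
Your proof is correct and follows essentially the same route as the paper's. Both split on whether $\mathcal{C}^+(l_1)\cap\mathcal{C}^+(l_2)$ is nonempty; both first lift membership in a sectorless tight cage using the contrapositive of Lemma \ref{lemma_not_cores_preserved}; and in the distance-positive case both find the first hyperplane dual to a shortest path between the two carriers, show it lies in the same orbit as $\hat h$ and in the sectorless tight cage of $l_1$, and then use looseness plus Lemma \ref{lemma_not_cores_preserved} applied to the opposite orientation to derive a contradiction. The only cosmetic differences are that you establish both $l_1$ and $l_2$ lie in sectorless tight cages (only $l_1$ is needed), and you show every hyperplane along $\gamma$ is collapsed by pushing $\rho(\gamma)$ into the convex core $Y$, whereas the paper obtains the same fact for the separating hyperplane directly from the separation of $\hat l_1$ and $\hat l_2$; neither difference changes the substance of the argument.
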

\begin{proof}
	Note that $\hat{l}_1$ and $\hat{l}_2$ are unbounded since $\hat{k}_1$ and $\hat{k}_2$ are unbounded (tight cage condition \ref{def_tight cage_unbounded_core}).
		
	We first claim that $l_1$ is contained in some sectorless tight cage, $\mathcal{T}$, in $X$. For if it were not, by Lemma \ref{lemma_not_cores_preserved} there exists a vertex $v \in \mathcal{C}^+(l_1)$ such that $\rho(v)$ is not contained in the core of a sectorless tight cage. However, this is not possible as $\rho(v)$ is contained the core, $\mathcal{C}^+(k_1)$.
		
	Suppose now that $\mathcal{C}^+(l_1) \cap \mathcal{C}^+(l_2) \neq \emptyset$. In this case we can apply tight cage condition \ref{def_tight cage_core_prop} to $\mathcal{T}$ to conclude that $l_2 \in \mathcal{T}$ (as $\hat{l}_1 \cap \hat{l}_2 = \emptyset$). The lemma then follows in this case.
		
	On the other hand, suppose that $d(\mathcal{C}^+(l_1), \mathcal{C}^+(l_2)) > 0$. By Lemma \ref{lemma_convexity2}, there exists a hyperplane, $\hat{l}$, such that $\hat{l}$ separates $\hat{l}_1$ from $\hat{l}_2$ and $\mathcal{C}^+(l) \cap \mathcal{C}^+(l_1) \neq \emptyset$. Since $\hat{l}$ separates $\hat{l}_1$ from $\hat{l}_2$, $\hat{l}$ must be in the $G$--orbit of $\hat{h}$. Let $l$ be the choice of halfspace for $\hat{l}$ so that $\hat{l}_1 \subset l$. By again applying tight cage condition \ref{def_tight cage_core_prop}, we conclude that $l \in \mathcal{T}$. 
	
	As $\hat{h}$ is loose, $l^*$ is not in a sectorless tight cage. By Lemma \ref{lemma_not_cores_preserved}, there exists a vertex $v \in \mathcal{C}^+(l^*)$ such that $\rho(v)$ is not in the core of any sectorless tight cage. However,this gives a contradiction since $\rho(v) \in \mathcal{C}^+(k_1)$. Thus, this second case is not possible. The lemma thus follows.
\end{proof}

The next proposition guarantees that $X_{G, \hat{h}}$ inherits many nice properties of $X$ when $\hat{h}$ is a loose hyperplane.

\begin{proposition} \label{prop_remains_nice}
	Let $X$ be a $2$--dimensional, locally finite CAT(0) cube complex with straight links. Suppose $G$ acts properly, cocompactly and without core carrier reflections on $X$. Let $\hat{h}$ be loose hyperplane in $X$, $\rho = \rho_{G, \hat{h}}$ be the $(G, \hat{h})$--collapsing map and $\bar{X} = X_{G, \hat{h}}$.  Then 
	\begin{enumerate}
		\item \label{lemma_remains_nice_dimension} $\bar{X}$ is $2$--dimensional
		\item \label{lemma_remains_nice_loc_fin} $\bar{X}$ is locally finite
		\item \label{lemma_remains_nice_qi} $\rho$ is a quasi-isometry
		\item \label{lemma_remains_nice_proper_cocompact} $G$ acts properly and cocompactly on $\bar{X}$
		\item \label{lemma_remains_nice_straight_links} $\bar{X}$ has straight links
		\item \label{lemma_remains_nice_no_carrier_ref} $G$ acts on $\bar{X}$ without core carrier reflections
	\end{enumerate}
\end{proposition}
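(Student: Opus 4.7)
The plan is to verify each of the six conclusions in turn, using Lemmas~\ref{lemma_collapsed_basics}--\ref{lemma_core_lifts} as the main toolkit. Part~(\ref{lemma_remains_nice_dimension}) is immediate: the image of any $d$-cube of $X$ under $\rho$ is a cube of dimension at most $d$, since each edge is either preserved or collapsed, so $\dim \bar X \le 2$.

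For (\ref{lemma_remains_nice_loc_fin})--(\ref{lemma_remains_nice_proper_cocompact}), the crucial claim is that every fiber $P = \rho^{-1}(\bar v)$ is a finite convex subcomplex of $X$. Convexity follows from the general observation that $\rho(u) = \rho(w)$ exactly when every hyperplane separating $u$ from $w$ lies in the $G$--orbit of $\hat h$, so all edges appearing in $P$ are dual to orbit hyperplanes. Lemma~\ref{lemma_collapse_sequential_hyps} forbids chains of three nested orbit hyperplanes with consecutively touching positive carriers, which together with the two-dimensional and locally finite structure of $X$ bounds the diameter of $P$ and makes $P$ finite. Then (\ref{lemma_remains_nice_loc_fin}) follows because every edge at $\bar v$ lifts to an edge at one of the finitely many vertices of $P$, each of which has finite valence in $X$; (\ref{lemma_remains_nice_qi}) follows because $\rho$ is $1$-Lipschitz by construction and, by cocompactness of $G \curvearrowright X$, the fiber diameters are uniformly bounded; and (\ref{lemma_remains_nice_proper_cocompact}) follows since cocompactness is inherited via surjectivity of $\rho$ and properness is inherited from the proper $X$-action combined with the uniform finiteness of fibers.

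The main obstacle is (\ref{lemma_remains_nice_straight_links}). Given an edge $\bar e$ at $\bar v \in \bar X$, I lift to an edge $e$ at some $v' \in P$ with dual hyperplane $\hat k$, necessarily not in the orbit of $\hat h$ by Lemma~\ref{lemma_collapsed_basics}(\ref{lemma_collapsed_basics_hyp2}). Applying straight links in $X$ at $v'$ yields an edge $f$ at $v'$ whose dual $\hat l$ is disjoint from $\hat k$; if $\hat l$ is not in the orbit, Lemma~\ref{lemma_collapsed_basics}(\ref{lemma_collapsed_basics_hyp3}) shows that $\rho(f)$ is a suitable edge at $\bar v$. The difficulty is when $\hat l$ lies in the orbit, hence is loose: then $\mathcal{C}^+(l)$ is the unbounded core of a sectorless tight cage containing $v'$, and crossing $\hat l$ lands us at another vertex of $P$ still adjacent to an edge dual to $\hat k$. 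Iterating straight links at the new vertex, and using the two-dimensional rigidity encoded by Proposition~\ref{prop_2d_tight cage} to track how successive orbit hyperplanes interact with $\hat k$, one eventually extracts a non-orbit hyperplane disjoint from $\hat k$; the process terminates because $P$ is finite.

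Finally, for (\ref{lemma_remains_nice_no_carrier_ref}), suppose towards a contradiction that some $g \in G$ is a core carrier reflection of a halfspace $\bar k$ contained in a sectorless tight cage of $\bar X$. Then $g$ stabilizes $\mathcal{C}^+(\bar k)$ setwise but sends $\hat{\bar k}$ to a distinct hyperplane $\hat{\bar k}'$ with $\mathcal{C}^+(\bar k') = \mathcal{C}^+(\bar k)$, so $\bar k$ and $\bar k'$ lie in the same cage of $\bar X$. Lifting to $k, k'$ in $X$, Lemma~\ref{lemma_core_lifts} places $k$ and $k'$ in a common sectorless tight cage of $X$, and Proposition~\ref{prop_2d_tight cage} then gives $\mathcal{C}^+(k) = \mathcal{C}^+(k')$. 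By equivariance of $\rho$ we have $g\hat k = \hat k'$ and $g\,\mathcal{C}^+(k) = \mathcal{C}^+(k') = \mathcal{C}^+(k)$, so $g$ is a core carrier reflection of $k$ in $X$, contradicting the hypothesis that $G \curvearrowright X$ acts without core carrier reflections.
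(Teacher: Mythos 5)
Your treatment of items (1), (2), (4) and (6) tracks the paper's proof, and your alternative for (3) --- uniformly bounded fiber diameters plus the fact that $\rho$ is $1$-Lipschitz --- is a valid substitute for the paper's direct count of non-orbit hyperplanes along a geodesic. The observation that each fiber $P=\rho^{-1}(\bar v)$ is convex is a pleasant way to unify (2)--(4); just note that the step from ``Lemma~\ref{lemma_collapse_sequential_hyps} forbids three nested orbit hyperplanes with consecutively touching carriers'' to ``$P$ has bounded diameter'' needs the Ramsey-type input the paper cites from \cite[Lemma 2.1]{CS}: in a two-dimensional complex any sufficiently long geodesic crosses three pairwise disjoint hyperplanes, and it is only then (after choosing them consecutively) that Lemma~\ref{lemma_collapse_sequential_hyps} applies.

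The genuine gap is in (5). Once $\hat{l}$ (dual to the edge $f$ produced by straight links at $v'$) is disjoint from $\hat{k}$, the hyperplane $\hat{l}$ cannot cross $\mathcal{C}(\hat{k})$, so the vertex $v''$ reached by crossing $f$ lies outside $\mathcal{C}(\hat{k})$ and is \emph{not} adjacent to any edge dual to $\hat{k}$. The iteration you describe --- apply straight links again at $v''$ ``still adjacent to an edge dual to $\hat{k}$'' --- therefore does not run. (Separately, asserting that $\mathcal{C}^+(l)$ is the core of a sectorless tight cage overclaims: for a loose hyperplane in the orbit of $\hat h$ exactly one of its two positive carriers is a core, and there is no reason it should be the side containing $v'$.) The repair is to iterate straight links along the newly produced edge each time, which yields a geodesic ray from $v'$ starting with $e$ whose dual hyperplanes are pairwise disjoint --- this is the same device used in the proof of Lemma~\ref{lemma_straight_link_cages} --- so that one forgets about $\hat{k}$ after the first step and simply waits for a non-orbit dual. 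The paper truncates this at length $4$, applies Lemma~\ref{lemma_collapse_sequential_hyps} to the duals of $e_1,e_2,e_3$ to find one outside the orbit, and chooses the first such $r$ so that $e_r$ is based at a vertex still in the fiber of $\bar v$ and $\rho(e_r)$ gives the required edge. Your alternative termination argument via finiteness of $P$ could also work once the iteration is set up this way, but as written the key step fails.
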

\begin{proof}
	
	\textbf{Proof of \ref{lemma_remains_nice_dimension}:}
	Given a set, $\hat{S}$, of pairwise intersecting hyperplanes in $\bar{X}$, there exist a set of pairwise intersecting hyperplanes in $X$ consisting of the lifts of hyperplanes in $\hat{S}$. As $X$ has dimension two, the dimension of $\bar{X}$ is at most two.
	
	\textbf{Proof of \ref{lemma_remains_nice_loc_fin}:}
	Let $\bar{v}$ be a vertex in $\bar{X}$, and let $V$ be the set of vertices in $X$ which lie in the preimage $\rho^{-1}(\bar{v})$. Let $Z$ be the set $V$ along with the edges connecting pairs of vertices in $V$. Note that each of these edges is dual to a hyperplane in the $G$--orbit of $\hat{h}$. Furthermore, the set $Z$ is connected.
	
	By an application of Ramsey's Theorem (see \cite[Lemma 2.1]{CS}), there exists a constant $M$ such that any geodesic of length $M$ crosses $3$ pairwise non-intersecting hyperplanes. We claim that $d(v, v') < M$ for every $v, v' \in V(\bar{v})$ (where distance is taken in $X$). 
	
	Suppose otherwise, that for some $v, v' \in V$, $d(v, v') \ge M$. Let $\gamma$ be a geodesic in $X$ from $v$ to $v'$. Every hyperplane dual to $\gamma$ is in the $G$--orbit of $\hat{h}$. As, $|\gamma| \ge M$, there exist three non-intersecting hyperplanes, $\hat{h}_1, \hat{h}_2$ and $\hat{h}_3$ that intersect $\gamma$. Without loss of generality, we can assume that no hyperplane separates any two of these hyperplanes. However, by Lemma \ref{lemma_collapse_sequential_hyps} this gives a contradiction.
	
	As $X$ is locally finite and $Z$ is finite diameter, $|V|$ is finite. Furthermore, the number of edges adjacent to a vertex in $V$ is finite as well. Thus, $\bar{X}$ is locally finite.
	
	\textbf{Proof of \ref{lemma_remains_nice_qi} }:
	Let $M$ be the same constant as above. Let $v, v'$ be vertices in $X$. Let $\gamma$ be a geodesic from $v$ to $v'$. By the above proof, at least $\lfloor |\gamma|/M \rfloor$ hyperplanes which intersect $\gamma$ are not in the $G$--orbit of $\hat{h}$. It follows that $d_X(v, v')/M - M \le d_{\bar{X}}(\rho(v), \rho(v'))$. The other inequality, $d_{\bar{X}}(\rho(v), \rho(v')) \le d_X(v, v')$, is obvious.
	
	\textbf{Proof of \ref{lemma_remains_nice_proper_cocompact}:}
	As $\bar{X}$ is locally finite and each vertex has finitely many preimages (by argument above), it follows that $G$ acts properly and cocompactly on $\bar{X}$ since $G$ acts properly and cocompactly on $X$.
	
	\textbf{Proof of \ref{lemma_remains_nice_straight_links}:}
	Let $\bar{v}$ be a vertex in $\bar{X}$, and let $\bar{e}$ be an edge adjacent to $\bar{v}$. Let $v \in X$ and $e$ be an edge adjacent to $v$ such that $\rho(v) = \bar{v}$ and $\rho(e) = \bar{e}$. The hyperplane, $\hat{k}_e$, dual to $e$ is not in the same orbit class as $\hat{h}$ (since $\rho(e)$ is an edge). 
	
	As $X$ has straight links, there exist edges $e_1, e_2$ and $e_3$ in $X$ such that the concatenation $e \cup e_1 \cup e_2 \cup e_3$ is a geodesic of length $4$, the hyperplanes $\hat{h}_e$, $\hat{h}_{e_1}$, $\hat{h}_{e_2}$ and $\hat{h}_{e_3}$ do not pairwise intersect (where $\hat{h}_{e_i}$ the hyperplane dual to $e_i$), and $e \cap e_1 = v$. 
	
	By Lemma \ref{lemma_collapse_sequential_hyps}, there exists $r$, $1 \le r \le 3$, such that $\hat{h}_{e_r}$ is not in the $G$--orbit of $\hat{h}$. Suppose further $r$ is minimal out of possible such choices. It follows that the edges $\bar{e}$ and $\rho(e_r)$ are adjacent to $\bar{v}$ and the hyperplanes dual to these edges do not intersect. Thus, the straight links condition is satisfied. 
	
	\textbf{Proof of \ref{lemma_remains_nice_no_carrier_ref}}:
	Suppose, for a contradiction, some $g \in G$ acts as a core carrier reflection on $\bar{X}$. In particular, there are halfspaces $k_1$ and $k_2$ in $\bar{X}$ such that $Y = \mathcal{C}^+(k_1) = \mathcal{C}^+(k_2)$ is the core of a sectorless tight cage, $g$ stabilizes $Y$ and $g \hat{k}_1 = \hat{k}_2$. 
	
	Let $l_1$ and $l_2$ be halfspaces in $X$ that are the lifts of $k_1$ and $k_2$. By Lemma \ref{lemma_core_lifts}, $l_1$ and $l_2$ are in a sectorless tight cage of $X$. As $gl_1 = l_2$, this implies $g$ acts as a core carrier reflection on $l_1$. However, this contradicts our hypothesis that $G$ acts without core carrier reflections on $X$.
\end{proof}

\begin{theorem} \label{thm_2_dim_ccc}
	Let $X$ be a $2$--dimensional, irreducible, locally finite CAT(0) cube complex with straight links. Suppose $G$ acts properly, cocompactly and without core carrier reflections on $X$. Then $G$ acts properly and cocompactly on a $2$--dimensional, irreducible, locally finite CAT(0) cube complex with straight links, $\bar{X}$, which satisfies $\partial \bar{X} = B(\bar{X})$. Furthermore, $X$ is $G$--equivariantly quasi-isometric to $\bar{X}$.
\end{theorem}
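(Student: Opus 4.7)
The plan is to induct on $N(X)$, the number of $G$--orbits of halfspaces in $X$ contained in some sectorless tight cage. This quantity is finite because $G$ acts cocompactly and $X$ is locally finite. The base case $N(X) = 0$ is immediate: no sectorless tight cage means no tight cage at all by Proposition \ref{prop_tight_cage_implies_sectorless}, so Theorem \ref{thm_char} (the equivalence of conditions \ref{thm_char_bdry} and \ref{thm_char_no_tight_cage}) yields $\partial X = B(X)$, and we take $\bar{X} = X$. For the inductive step, suppose $N(X) > 0$. Since $X$ is irreducible and contains a sectorless tight cage, Lemma \ref{lemma_contains_loose_hyp} produces a loose hyperplane $\hat{h}$. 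Set $X' = X_{G,\hat{h}}$ with collapsing map $\rho$. Proposition \ref{prop_remains_nice} shows that $X'$ is a $2$--dimensional, locally finite CAT(0) cube complex with straight links, that $G$ acts properly and cocompactly on $X'$ without core carrier reflections, and that $\rho$ is a $G$--equivariant quasi-isometry. Assuming $X'$ is irreducible (addressed below), the inductive hypothesis applied to $X'$ produces a complex $\bar{X}$ with all the required properties, and the claimed quasi-isometry $X \to \bar{X}$ is obtained by composing $\rho$ with the one given inductively.

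The strict decrease $N(X') < N(X)$ is the heart of the argument and is where Lemma \ref{lemma_core_lifts} does the key work. Let $\pi$ denote the lift map sending each halfspace of $X'$ to its unique lift in $X$, as in Lemma \ref{lemma_collapsed_basics}(\ref{lemma_collapsed_basics_hyp2}); this map is $G$--equivariant and injective. If $k$ is a halfspace of $X'$ contained in a sectorless tight cage $\mathcal{T}'$, then $|\mathcal{T}'| \ge 2$, and Lemma \ref{lemma_core_lifts} applied to $k$ and any other element of $\mathcal{T}'$ shows that $\pi(k)$ lies in a sectorless tight cage of $X$. Hence $\pi$ descends to an injection from the $G$--orbits of such halfspaces in $X'$ into those of $X$. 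This injection is strict because $h$ itself lies in a sectorless tight cage of $X$ but its orbit has no preimage under $\pi$: the hyperplane $\hat{h}$ is collapsed by $\rho$.

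The main obstacle is verifying irreducibility of $X'$ at each inductive step, which is needed both to re-invoke Lemma \ref{lemma_contains_loose_hyp} inside the induction and to match the irreducibility claim in the theorem's conclusion. The plan here is to argue by contrapositive: a reducible $2$--dimensional $X'$ must split as a product of two trees (since any two intersecting hyperplanes would combine with a hyperplane from the other factor to violate the $2$--dimensional constraint), and lifting the corresponding partition of hyperplanes back to $X$ together with an assignment of the $G$--orbit of $\hat{h}$ to one of the two classes should yield a product decomposition of $X$, contradicting irreducibility of $X$. The delicate point is that Proposition \ref{prop_2d_tight cage}, together with the defining property of a loose hyperplane, rigidly controls how hyperplanes in the $G$--orbit of $\hat{h}$ interact with every other hyperplane of $X$: since one halfspace of $\hat{h}$ is the core of a sectorless tight cage, each hyperplane of $[G\hat{h}]$ must attach uniformly to the two candidate classes, forcing the lifted decomposition to exist.
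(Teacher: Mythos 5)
Your proof follows essentially the same route as the paper's: iteratively collapse a loose hyperplane (found via Lemma~\ref{lemma_contains_loose_hyp}), invoke Proposition~\ref{prop_remains_nice} to preserve the hypotheses, and terminate on a finite complexity. Your complexity is the number of $G$--orbits of halfspaces lying in sectorless tight cages, while the paper counts orbits of sectorless tight cages; these play the same role. Your argument for strict decrease, via $G$--equivariant injectivity of the lift map and the observation that $[G h]$ has no preimage, is a slightly cleaner packaging of the paper's, which instead invokes Lemma~\ref{lemma_not_cores_preserved} directly to see that $\rho(\mathcal{T})$ is no longer contained in any sectorless tight cage.

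The one place you genuinely diverge, and it is a point in your favor, is the explicit treatment of irreducibility of the intermediate complexes. The paper applies Lemma~\ref{lemma_contains_loose_hyp} (which requires irreducibility) at every step of the iteration, yet only verifies irreducibility for the terminal $\bar{X}$, via Proposition~\ref{prop_red_euc_summary} and the absence of tight cages. That verification is unavailable at intermediate stages precisely because the intermediate $X_i$ still \emph{does} have tight cages; and a reducible $2$--dimensional essential cocompact complex is a product of two unbounded trees, which (by Lemma~\ref{lemma_reducible_case}) has sectorless tight cages yet no loose hyperplane, so the iteration would stall. The gap you flag is therefore real.

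Your proposed fix is sound, though the sketch needs one concrete ingredient to close. Pick $t_0 \in \mathcal{T} \setminus \{h\}$ (possible since $|\mathcal{T}| \ge 2$). Since $G$ acts without core carrier reflections, $g\hat{t}_0 \notin [G\hat{h}]$, so $\rho(g\hat{t}_0)$ is a genuine hyperplane of $X'$ landing in one of the two classes of the presumed product decomposition. By Proposition~\ref{prop_2d_tight cage} and tight cage condition~\ref{def_tight cage_core_prop}, a hyperplane $\hat{k}$ distinct from $g\hat{h}$ and $g\hat{t}_0$ crosses $g\hat{h}$ if and only if it crosses the common positive carrier $\mathcal{C}^+(gh) = \mathcal{C}^+(gt_0)$, if and only if it crosses $g\hat{t}_0$. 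Assigning $g\hat{h}$ to the class of $g\hat{t}_0$ is well-defined (distinct elements of $\hat{\mathcal{T}}$ are pairwise non-crossing by Remark~\ref{rmk_sectorless_tight_cages_in_2d}, hence land together), and the resulting partition of the hyperplanes of $X$ is transverse, giving a product decomposition by \cite[Lemma 2.5]{CS} and contradicting irreducibility of $X$. With this detail supplied, your proposal is a correct and complete proof, strengthening the paper's write-up at the point noted.
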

\begin{proof}
	If $X$ does not contain a tight cage, then by Theorem \ref{thm_char} we have that $\partial X = B(X)$. The claim then follows by setting $\bar{X} = X$. 
	
	On the other hand, suppose that that $X$ contains a tight cage. By Proposition \ref{prop_tight_cage_implies_sectorless}, $X$ contains a sectorless tight cage $\mathcal{T}$. Furthermore, by Lemma \ref{lemma_contains_loose_hyp}, there exists a loose hyperplane $\hat{h}$ in $X$. Set $X' = X_{G, \hat{h}} = \rho(X)$, where $\rho = \rho_{G, \hat{h}}$ is the $(G, \hat{h})$--collapsing map. By Proposition \ref{prop_remains_nice}, $X'$ is a $2$--dimensional, locally finite CAT(0) cube complex with straight links on which $G$ acts properly and cocompactly, and the map $\rho$ is a $G$--equivariant quasi-isometry. 
	
	As $\hat{h}$ is loose, we can orient the halfspace $h$ associated to $\hat{h}$ so that $C^+(h)$ is in a sectorless tight cage and $C^+(h^*)$ is not in any sectorless tight cage. As $C^+(h)$ is in a sectorless tight cage and the core of tight cages are unbounded, it follows that $\hat{h}$ is unbounded. By Lemma \ref{lemma_not_cores_preserved}, there exists a vertex $v \in C^+(h^*)$ such that $\rho(v)$ is not contained in any sectorless tight cage. Thus, $\rho(k)$, for any halfspace in $k \in \mathcal{T}$, is not contained in a sectorless tight cage. Consequently, by Lemma \ref{lemma_core_lifts} $X'$ has strictly less orbit classes of sectorless tight cages than $X$. 
	
	As $X$ has only finitely many orbit classes of sectorless tight cages, we can repeat this process finitely many times to obtain the cube complex $\bar{X}$ which does not contain any sectorless tight cage. The composition of the corresponding $(G, \hat{h})$--collapsing maps gives the desired $G$--equivariant quasi-isometry. 
	
	By Proposition \ref{prop_tight_cage_implies_sectorless}, $\bar{X}$ does not contain a tight cage. By Proposition \ref{prop_red_euc_summary}, $\bar{X}$ must be irreducible. Finally, by Theorem \ref{thm_char}, $\partial \bar{X} = B(\bar{X})$.
\end{proof}

\bibliographystyle{amsalpha}
\bibliography{mybibliography}
\end{document}